\documentclass[a4paper,11pt]{article}
\usepackage{amssymb,amsmath,amsfonts,amsthm}
\setlength{\evensidemargin}{1cm}
\setlength{\oddsidemargin}{1cm}
\setlength{\topmargin}{-1cm}
\setlength{\textwidth}{14cm}
\setlength{\textheight}{22cm}

\newtheorem{theorem}{Theorem}[section]

\newtheorem{lemma}[theorem]{Lemma}
\newtheorem{proposition}[theorem]{Proposition}
\newtheorem{definition}[theorem]{Definition}

\newcommand{\ds}{\displaystyle}

\newcommand{\NN}{\mathbb N}

\newcommand{\CC}{\mathbb C}
\newcommand{\RR}{\mathbb R}
\newcommand{\ZZ}{\mathbb Z}

\newcommand{\DD}{\mathcal D}
\newcommand{\SSS}{\mathcal S}

\allowdisplaybreaks

\parindent 0.7cm

\newcommand{\beq}{\begin{eqnarray}}
\newcommand{\eeq}{\end{eqnarray}}

\newcommand{\beqs}{\begin{eqnarray*}}
\newcommand{\eeqs}{\end{eqnarray*}}
\setcounter{section}{-1}


\begin{document}
\catcode`\@=11


  \renewcommand{\theequation}{\thesection.\arabic{equation}}
  \renewcommand{\section}%
  {\setcounter{equation}{0}\@startsection {section}{1}{\z@}{-3.5ex plus -1ex
   minus -.2ex}{2.3ex plus .2ex}{\Large\bf}}
\title{\bf Semilinear pseudodifferential equations in spaces of tempered ultradistributions }
\author{Marco Cappiello $^{\textrm{a *}}$, Stevan Pilipovi\'c $^{\textrm{b}}$ and Bojan Prangoski $^{\textrm{c}}$}
\date{}
\maketitle
\noindent
\def\thefootnote{}
\footnote{ 2010 \textit{Mathematical Subject Classification: 47G30, 46F05, 35A17 } \\
\textit{keywords: Tempered ultradistributions, pseudodifferential operators, semilinear equations} 
\\ * Corresponding author:  marco.cappiello@unito.it}
\begin{abstract}
We study a class of semilinear elliptic equations on spaces of tempered ultradistributions of Beurling and Roumieu type. Assuming that the linear part of the equation is an
elliptic pseudodifferential operator of infinite order with a sub-exponential growth of its symbol and that the non linear part is given by an infinite sum of powers of $u$ with sub-exponential growth with respect to $u,$ we prove a regularity result in the functional setting of the quoted ultradistribution spaces for a weak Sobolev
type solution $u$.
\end{abstract}
\section{Introduction}
In this paper we consider a class of semilinear equations and prove a result of regularity in the spaces of tempered ultradistributions of Beurling and Roumieu type. These ultradistributions can be regarded as a global counterpart on $\RR^d$   of the local ultradistributions studied by Komatsu \cite{Komatsu1, Komatsu2, Komatsu3} and they represent a natural generalisation of non-quasi-analytic Gelfand-Shilov type ultradistributions, cf. \cite{GS, PilipovicU, PilipovicT}. As well as the Gelfand-Shilov spaces, they are also a good functional setting for global pseudodifferential operators of infinite order, namely with symbol $a(x,\xi)$ admitting sub-exponential growth in both $x$ and $\xi$, see \cite{C1,C2, BojanS}. Here we want to apply the pseudodifferential operators introduced by the third author in \cite{BojanS} to the study of semilinear equations of the form

\begin{equation}
\label{maineq}
Au=f+F[u]
\end{equation} where $A=a(x,D)$, $f$ is a given test function in our setting and $F[u]$ is a nonlinear term given by a suitable infinite sum of powers of $u$. In \cite{PP} we  investigated the class of operators of \cite{BojanS} in the context of the Weyl and the Anti Wick calculus  while in the recent paper \cite{CPP}, we considered the case of linear equations and proved a result of hypoellipticity via the construction of a parametrix. To treat semilinear equations, we need to adopt a more sophisticated method based on suitable commutators and nonlinear estimates. This method, previously used in \cite{BG} \cite{CGR1}-\cite{CN2}, \cite{GR} in the case of symbols corresponding to differential operators and of nonlinear terms of finite order, is used also here but with a more advanced technique since we consider infinite series both in the case of symbols and in the case of nonlinear terms. Examples of our elliptic symbols are $a(x,\xi)=e^{c\langle(x,\xi)\rangle^{1/m}}, m>1, c\in \mathbb R$, whereas for what concerns the nonlinear terms we can consider $F[u]=\sum_{\beta}c_{\beta}P_{\beta}(x)u^{|\beta|}$ where $P_{\beta}(x)$ are ultrapolynomials of the form $\sum_{\gamma}\tilde{c}_{\gamma}x^{\gamma}/\gamma!^m, m>1$ and $c_{\beta}$ are suitable complex numbers tending rapidly to zero. Actually, we will consider nonlinear terms also with the additional assumption that $u\in H^s(\mathbb R^d), s>d/2$. With this we have plenty of examples, for example $F[u]=P(x)\cos u$ or $F[u]=P(x)e^{u^k}, k\in \mathbb Z_+$, where $P$ has sub-exponential growth of the order related to the order of the growth of the symbol. In this way we can analyse  elliptic operators $A$ of infinite order and sub-exponential growth  as well as nonlinear terms with sub-exponential growth, not considered in the literature, which shows an  intrinsic connection of the pseudodifferential calculus of \cite{BojanS} with the spaces of ultradistributions.

  \par

The paper is organised as follows. In the next Section \ref{sec0} we introduce the main tools involved in the paper and state the main result, namely Theorem \ref{mainthm}. Section \ref{examples} contains some examples of elliptic operators and nonlinear terms which motivate our analysis and for which Theorem \ref{mainthm} holds. In Section \ref{sec1} we refine some results about the pseudodifferential operators studied in \cite{BojanS} and  we prove some
precise estimates for the norms of some composed operators which will be instrumental in the proof of Theorem \ref{mainthm}. Finally, Section \ref{sec2} is devoted to the proof of the theorem which will be divided in two parts, one corresponding to the proof of the decay properties of the solution and the other related to its regularity.

\section{Notation and the main theorem}\label{sec0}

Before stating our results, let us fix some notation and introduce the functional setting where they are obtained. In the sequel, the sets of integer, non-negative integer, positive integer, real and complex numbers are denoted as standard by $\ZZ$, $\NN$, $\ZZ_+$, $\RR$, $\CC$. We denote $\langle x\rangle =(1+|x|^2)^{1/2} $ for $x\in \RR^d$, $D^{\alpha}= D_1^{\alpha_1}\ldots D_d^{\alpha_d},\quad D_j^
{\alpha_j}={i^{-1}}\partial^{\alpha_j}/{\partial x}^{\alpha_j}$, $\alpha=(\alpha_1,\ldots,\alpha_d)\in\NN^d$. Fixed $B>0$, we shall denote by $Q_B^c$ the set of all $(x,\xi)\in \RR^{2d}$ for which we have $\langle x \rangle \geq B$ or $\langle \xi \rangle \geq B.$ Finally, for $s \in \RR$, we shall denote by $H^s(\RR^d)$ the Sobolev space of all $u \in \mathcal{S}'(\RR^d)$ for which $\langle \xi \rangle^{s}\hat{u}(\xi) \in L^2(\RR^d)$, where $\hat{u}$ denotes the Fourier transform of $u$.
Following \cite{Komatsu1}, in the sequel we shall consider sequences $M_{p}$ of positive numbers such that $M_0=M_1=1$ and satisfying
all or some of the following conditions:
 $\;\;(M.1)$ $M_{p}^{2} \leq M_{p-1} M_{p+1}, \; \; p \in\ZZ_+$;
$\;\;(M.2)$ $\ds M_{p} \leq c_0H^{p} \min_{0\leq q\leq p} \{M_{p-q} M_{q}\}$, $p,q\in \NN$, for some $c_0,H\geq1$;
 $\;\;(M.3)$  $\ds\sum^{\infty}_{p=q+1}{M_{p-1}}/{M_{p}}\leq c_0q {M_{q}}/{M_{q+1}}$, $q\in \ZZ_+$;
$\;\;(M.4)$ $\ds \left({M_p}/{p!}\right)^2\leq {M_{p-1}}/{(p-1)!}\cdot {M_{p+1}}/{(p+1)!}$, for all $p\in\ZZ_+$.
In some assertions in the sequel we could replace $(M.3)$ by the weaker assumption:
$\;\;(M.3)'$ $\ds \sum_{p=1}^{\infty}{M_{p-1}}/{M_p}<\infty$ (cf. \cite{Komatsu1}). We observe moreover that  $(M.4)$ implies $(M.1)$.
As an example of sequence satisfying all the conditions above we can take $M_p=p!^s$, $s>1$.
 For a multi-index $\alpha\in\NN^d$, $M_{\alpha}$ will mean $M_{|\alpha|}$, $|\alpha|=\alpha_1+...+\alpha_d$. We can associate to any sequence $M_p$ as above the function
$
M(\rho)=\sup_{p\in\NN}\log_+   {\rho^{p}}/{M_{p}} , \; \; \rho > 0.
$
This is a non-negative, continuous, monotonically increasing function which vanishes for sufficiently small $\rho>0$ and increases more rapidly than $\ln \rho^p$ when $\rho$ tends to infinity, for any $p\in\NN$ (cf. \cite{Komatsu1}). \par As in \cite{Komatsu1}, see also \cite{PilipovicT}, we shall denote by $\mathfrak{R}$ the set of positive sequences which monotonically increase to infinity. For $(r_p)\in\mathfrak{R}$, consider the sequence $N_0=1$, $N_p=M_pR_p$, $p\in\ZZ_+$, where we denote $R_p=\prod_{j=1}^{p}r_j$ (in the future we will often use this notation). It is easy to verify that this sequence satisfies $(M.1)$ and $(M.3)'$. Its associated function will be denoted by $N_{r_p}(\rho)$, i.e. $\ds N_{r_{p}}(\rho )=\sup_{p\in\NN} \log_+ {\rho^{p }}/(M_pR_p)$, $\rho > 0$. Note, for given $(r_{p})$ and every $k > 0 $ there is $\rho _{0} > 0$ such that $\ds N_{r_{p}} (\rho ) \leq M(k \rho )$ for $\rho > \rho _{0}$.\par
Now we can introduce the space of tempered ultradistributions and its test function space.
For $m >0$ and a sequence $M_p$ satisfying the conditions $(M.1)-(M.3)$, we shall denote by $\mathcal{S}_{\infty}^{M_p,m}(\RR^d)$ the Banach space of all functions $\varphi \in \mathcal{C}^{\infty}(\RR^d)$ such that
\begin{equation} \label{norm}
\|\varphi\|_m:=\sup_{\alpha\in \NN^d}\sup_{x \in \RR^d}\frac{m^{|\alpha|}| D^{\alpha}\varphi(x)| e^{M(m|x|)}}{M_{\alpha }}<\infty,
\end{equation}
endowed with the norm in \eqref{norm} and we denote $\ds\SSS^{(M_p)}(\RR^d)=\lim_{\substack{\longleftarrow\\ m\rightarrow\infty}} \SSS^{M_p,m}_{\infty}(\RR^d)$ and $\ds\SSS^{\{M_p\}}(\RR^d)=\lim_{\substack{\longrightarrow\\ m\rightarrow 0}} \SSS^{M_p,m}_{\infty}(\RR^d)$. In the sequel we shall consider simultaneously the two latter spaces by using the common notation $\mathcal{S}^{\ast}(\RR^d)$.
For each space we will consider a suitable symbol class. Definitions and statements will be formulated first for the $(M_p)$ case and then for the $\{M_p\}$ case, using the notation $\ast$. We shall denote by $\mathcal{S}^{\ast \prime}(\RR^d)$ the strong dual space of $\mathcal{S}^{\ast}(\RR^d)$. We refer to \cite{PilipovicK, PilipovicU, PilipovicT, BojanL} for the properties of $\mathcal{S}^{\ast}(\RR^d)$ and $\mathcal{S}^{\ast \prime}(\RR^d)$. Here we just recall that the Fourier transformation is an automorphism on $\mathcal{S}^{\ast}(\RR^d)$ and on $\mathcal{S}^{\ast \prime}(\RR^d)$ and that for $M_p=p!^s,\, s>1$, we have $M(\rho) \sim \rho^{1/s}$. In this case $\mathcal{S}^{\ast}(\RR^d)$ coincides respectively with the Gelfand-Shilov spaces $\Sigma_s(\RR^d)$ (resp. $\mathcal{S}_s(\RR^d)$) of all functions  $\varphi \in \mathcal{C}^{\infty}(\RR^d)$ such that
$$\sup_{\alpha, \beta \in \NN^d} h^{-|\alpha|-|\beta|}(\alpha!\beta!)^{-s} \sup_{x \in \RR^d}|x^\beta \partial^\alpha \varphi(x)|<\infty$$
for every $h >0$ (resp. for some $h >0$), cf. \cite{GS, PilipovicU}. A measurable function $f$ on $\RR^d$ is said to be of ultrapolynomial growth of class * if $\|f(\cdot)e^{-M(m|\cdot|)}\|_{L^{\infty}(\RR^d)}<\infty$ for some $m>0$ (resp. for every $m >0$).

\par
Following \cite{BojanS} we now introduce the class of pseudodifferential operators involved in the sequel.
Let $M_p, A_p$ be two sequences of positive numbers. We assume that $M_p$ satisfies  $(M.1)$, $(M.2)$, $(M.3)$ and $(M.4)$ and that $A_p$  satisfies
 $A_0=A_1=1$, $(M.1), (M.2), (M.3)'$ and $(M.4).$ Moreover we suppose that $A_p \subset M_p$ i.e. there exist $c_0 >0, L>0$ such that $A_p \leq c_0 L^p M_p$ for all $p \in \NN.$ Let $\rho_0=\inf\{\rho\in\RR_+|\, A_p\subset M_p^{\rho}\}$. Obviously $0<\rho_0\leq 1$. Let $\rho\in\RR_+$ be arbitrary but fixed such that $\rho_0\leq \rho\leq 1$ if the infimum can be attained, or otherwise $\rho_0<\rho\leq 1$. For any fixed $h>0,m >0$ we denote by $\Gamma_{A_p, \rho}^{M_p, \infty}(\RR^{2d};h,m)$ the space of all functions $a(x,\xi) \in \mathcal{C}^{\infty}(\RR^{2d})$ such that
$$ \sup_{\alpha, \beta \in \ZZ_+^d} \sup_{(x,\xi) \in \RR^{2d}}\frac{|D_{\xi}^\alpha D_x^\beta a (x,\xi)| \langle (x,\xi)\rangle^{\rho|\alpha+\beta|} e^{-M(m|x|)-M(m|\xi|)}}{h^{|\alpha+\beta|}A_{\alpha}A_{\beta}}<\infty,$$
where $M(\cdot)$ is the associated function for the sequence $M_p$. Then we define
$$
\Gamma_{A_p, \rho}^{(M_p), \infty}(\RR^{2d}; m)= \lim_{\stackrel{\longleftarrow}{h \rightarrow 0}}\Gamma_{A_p, \rho}^{M_p, \infty}(\RR^{2d};h,m);  \quad
\Gamma_{A_p, \rho}^{(M_p), \infty}(\RR^{2d})=\lim_{\stackrel{\longrightarrow}{m \to \infty}}\Gamma_{A_p, \rho}^{(M_p),\infty}(\RR^{2d};m);$$
$$\Gamma_{A_p, \rho}^{\{M_p\}, \infty}(\RR^{2d}; h)=\lim_{\stackrel{\longleftarrow}{m \rightarrow 0}}\Gamma_{A_p, \rho}^{M_p,\infty}(\RR^{2d};h,m); \quad
\Gamma_{A_p, \rho}^{\{M_p\}, \infty}(\RR^{2d})=\lim_{\stackrel{\longrightarrow}{h \rightarrow \infty}}\Gamma_{A_p, \rho}^{\{M_p\},\infty}(\RR^{2d};h).
$$
We  associate to any symbol $a \in \Gamma^{\ast, \infty}_{A_p, \rho}(\RR^{2d})$ a pseudodifferential operator $a(x,D)$
acting continuously on $\mathcal{S}^{\ast}(\RR^d)$ and on $\mathcal{S}^{\ast \prime}(\RR^d)$. A symbolic calculus for $\Gamma^{\ast, \infty}_{A_p, \rho}(\RR^{2d})$ (denoted there by $\Gamma^{\ast, \infty}_{A_p, A_p, \rho}(\RR^{2d})$) has been constructed in \cite{BojanS}. As a consequence it was proved that the class of pseudodifferential operators with symbols in $\Gamma^{\ast, \infty}_{A_p, \rho}(\RR^{2d})$ is closed with respect to composition and adjoints, cf. \cite{BojanS} and the next section for details. Moreover, in \cite{CPP} we consider hypoelliptic symbols in $\Gamma^{*,\infty}_{A_p, \rho}(\RR^{2d})$ and we proved the existence of parametrices for the associated operators. Now we need to introduce a notion of elliptic symbol in $\Gamma^{*,\infty}_{A_p, \rho}(\RR^{2d})$. For this purpose let $\tilde{M}_p$ be another sequence such that $\tilde{M}_0=\tilde{M}_1=1$ and satisfying $(M.1)$, $(M.2)$, $(M.3)'$ and $(M.4)$ and $M_p\subset \tilde{M}_p$, i.e. there exists $\tilde{c},\tilde{L}>0$ such that $M_p\leq \tilde{c}\tilde{L}^p\tilde{M}_p$ (observe that $\tilde{M}_p$ can be the same as $M_p$). Obviously, without losing generality, we can assume that the constant $H$ from $(M.2)$ is the same for the sequences $A_p$, $M_p$ and $\tilde{M}_p$. For $(k_p)\in\mathfrak{R}$ we denote by $\tilde{N}_{k_p}(\cdot)$ the associated function to the sequence $\tilde{M}_p \prod_{j=1}^p k_j$. One easily obtains the following inequalities
\beq\label{nrstv}
\tilde{M}(\lambda/\tilde{L})\leq M(\lambda)+\ln_+ \tilde{c} \mbox{ and } \tilde{N}_{k_p}(\lambda/\tilde{L})\leq N_{k_p}(\lambda)+\ln_+ \tilde{c},\,\, \forall\lambda>0.
\eeq
If $(k_p)\in\mathfrak{R}$ satisfies the condition $K_{p+q}\leq cH^{p+q} K_p K_q$ for some $c,H\geq 1$, where $K_p$ stands for $\prod_{j=1}^p k_j$ then the sequences $M_pK_p$ and $\tilde{M}_pK_p$ satisfy $(M.2)$ (since $M_p$ and $\tilde{M}_p$ do). Lemma 2.3 of \cite{BojanL} proves that there are plenty of sequences of this type (in fact, the quoted lemma claims that given a sequence of $\mathfrak{R}$ one can find such sequence which is smaller than the chosen one). Because of this property we will say that the sequence $(k_p)$ satisfies $(M.2)$ (although the precise statement will be to say that the sequence $K_p=\prod_{j=1}^p k_j$ satisfies $(M.2)$).
\begin{definition} \label{elliptic}
Given $A_p, M_p, \tilde{M}_p$ as before,
a symbol $a\in \Gamma^{*,\infty}_{A_p,\rho}\left(\RR^{2d}\right)$ is said to be $(\tilde{M}_p)$-elliptic, (resp. $\{\tilde{M}_p\}$-elliptic) if
\begin{itemize}
\item[$i)$] there exist $m,B,c>0$ (resp. there exist $(k_p)\in\mathfrak{R}$ which satisfies $(M.2)$ and $B,c>0$) such that
\beqs
|a(x,\xi)|\geq ce^{\tilde{M}(m|\xi|)}e^{\tilde{M}(m|x|)} (\mbox{ resp. } |a(x,\xi)|\geq ce^{\tilde{N}_{k_p}(|\xi|)}e^{\tilde{N}_{k_p}(|x|)}), \quad  (x,\xi) \in Q_B^c;
\eeqs
\item[$ii$)] for every $h>0$ there exists $C>0$ (resp. there exist $h,C>0$) such that
\beqs
\left|D^{\alpha}_{\xi}D^{\beta}_x a(x,\xi)\right|\leq C\frac{h^{|\alpha|+|\beta|}A_{\alpha+\beta}|a(x,\xi)|} {\langle(x,\xi)\rangle^{\rho(|\alpha|+|\beta|)}}, \quad (x,\xi) \in Q_B^c.
\eeqs
\end{itemize}
\end{definition}

Given $(k_p)\in\mathfrak{R}$ and $\gamma \in \NN^d$, from now on we will always use the notation $K_p$ for $\prod_{j=1}^p k_j$ and  $K_{\gamma}$ for $K_{|\gamma|}$ .\\
\indent Finally we introduce the class of nonlinear terms involved in our equations.

For $\beta \in \NN^d$, let $p_{\beta}(x)$ be smooth functions on $\RR^d$ such that for every $h>0$ there exists $C>0$ such that
\beq\label{343}
\left|D^{\alpha}_x p_{\beta}(x)\right|\leq C\frac{h^{|\alpha|+|\beta|}A_{\alpha} e^{\tilde{M}(h|x|)}}{\tilde{M}_{\alpha}} \mbox{ for all } \alpha,\beta\in\NN^d,
\eeq
(respectively
\beq\label{345}
\left|D^{\alpha}_x p_{\beta}(x)\right|\leq C\frac{h^{|\alpha|+|\beta|}A_{\alpha} e^{\tilde{N}_{k_p}(h|x|)}} {\tilde{M}_{\alpha}\prod_{j=1}^{|\alpha|}k_j} \mbox{ for all } \alpha,\beta\in\NN^d.)
\eeq
For such a family of functions $p_{\beta}(x)$ and $u \in H^s(\RR^d), s>d/2$ we can consider the function
\begin{equation}\label{nonlinearterm}F[u]=\ds\sum_{|\beta|=2}^{\infty}p_{\beta} u^{|\beta|},
\end{equation}
The condition $s>d/2$ implies that $F[u]$ is well defined and continuous on $\RR^d$ and $\left\|F[u] e^{-\tilde{M}(h|\cdot|)}\right\|_{L^{\infty}(\RR^d)}<\infty$ (resp. $\left\|F[u] e^{-\tilde{N}_{k_p}(h|\cdot|)}\right\|_{L^{\infty}(\RR^d)}<\infty$) for some $h$. This (together with (\ref{nrstv})) implies that $F[u]\in\SSS^{* \prime}(\RR^d)$.

The main result of the paper is the following one.

\begin{theorem} \label{mainthm}
Let $a \in \Gamma^{*,\infty}_{A_p,\rho}\left(\RR^{2d}\right)$ be $(\tilde{M}_p)$-elliptic (resp. $\{\tilde{M}_p\}$-elliptic) and let $f\in\SSS^*\left(\RR^d\right)$. Let $u\in H^s\left(\RR^d\right)$, $s>d/2$, be a solution of the equation \eqref{maineq} with $F[u]$ defined by \eqref{343} and \eqref{nonlinearterm} (resp. \eqref{345} and \eqref{nonlinearterm}). Then the following properties hold:
\\
\indent $i)$ For every $h>0$ there exists $C>0$ (resp. there exist $h,C>0$) such that $|u(x)|\leq C e^{-M(h|x|)}$. Moreover, $u\in\mathcal{C}^{\infty}(\RR^d)$ with the following estimate on its derivatives: there exists $\tilde{h}>0$ such that
\beqs
\sup_{\alpha}\frac{\tilde{h}^{|\alpha|}\|D^{\alpha} u\|_{L^{\infty}}}{\tilde{M}_{\alpha}}<\infty, \left(\mbox{ resp. } \sup_{\alpha}\frac{\tilde{h}^{|\alpha|}\|D^{\alpha} u\|_{L^{\infty}}}{\tilde{M}_{\alpha}\prod_{j=1}^{|\alpha|}k_j}<\infty\right).
\eeqs
\indent $ii)$ Furthermore, if $F[u]$ is a finite sum, then $u\in\SSS^*\left(\RR^d\right)$.
\end{theorem}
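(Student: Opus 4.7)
By the $*$-ellipticity of $a$ and the parametrix construction of \cite{CPP}, there is an operator $P$ in the same class such that $PA = I + K$ with $K$ a $*$-regularising operator mapping $\mathcal{S}^{*\prime}(\RR^d)$ continuously into $\mathcal{S}^*(\RR^d)$. Applying $P$ to \eqref{maineq} yields
\begin{equation*}
u \;=\; Pf \;-\; Ku \;+\; PF[u].
\end{equation*}
Since $f\in\mathcal{S}^*(\RR^d)$ and $Ku\in\mathcal{S}^*(\RR^d)$, both assertions reduce to transferring, via $P$, estimates for $F[u]$ to estimates for $u$. Because $F[u]$ depends on $u$ itself, the plan is to set up two bootstrap arguments, one for decay and one for smoothness.

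\textbf{Decay.} The Sobolev embedding $H^s\hookrightarrow L^\infty$ ($s>d/2$) together with \eqref{343} (resp.\ \eqref{345}) gives an initial bound $|F[u](x)|\leq C_0 e^{\tilde M(h|x|)}$ (resp.\ $C_0 e^{\tilde N_{k_p}(h|x|)}$) for some $h>0$, so that $PF[u]$, viewed as $P$ acting on a function of sub-exponential growth, already satisfies $|u(x)|\leq C_1 e^{-M(h_1|x|)}$ for some $h_1>0$. I would then iterate: inserting this bound back into $F[u]=\sum_{|\beta|\geq 2}p_\beta u^{|\beta|}$ multiplies the sub-exponential decay by a factor $|\beta|\geq 2$, which, after combining with \eqref{nrstv} and the convexity of $M$, dominates the $\tilde M$-growth of $p_\beta$ and produces a strictly better exponent $h_2>h_1$. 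Convergence of the $\beta$-series at each step is guaranteed by the geometric factor $h^{|\beta|}$ in \eqref{343}/\eqref{345} once the pointwise bound on $u$ is small enough. Iterating gives the stated decay for every $h>0$ in the Beurling case and for some $h>0$ in the Roumieu case.

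\textbf{Smoothness.} Differentiating \eqref{maineq} and again applying $P$,
\begin{equation*}
D^\alpha u \;=\; P D^\alpha f \;-\; K D^\alpha u \;+\; P D^\alpha F[u] \;+\; P\,[D^\alpha, A]\,u.
\end{equation*}
The commutator $[D^\alpha,A]$ is controlled by the refined composition/commutator estimates in Section \ref{sec1}, producing constants of order $h^{|\alpha|}A_\alpha$ that are absorbed into the target $\tilde M_\alpha$-growth since $A_p\subset M_p\subset\tilde M_p$. The genuinely nonlinear part $D^\alpha F[u]$ is expanded via Faà di Bruno,
\begin{equation*}
D^\alpha\bigl(p_\beta u^{|\beta|}\bigr) \;=\; \sum_{\alpha^{(0)}+\cdots+\alpha^{(|\beta|)}=\alpha} \frac{\alpha!}{\alpha^{(0)}!\cdots\alpha^{(|\beta|)}!}\; D^{\alpha^{(0)}}p_\beta\,\prod_{j=1}^{|\beta|} D^{\alpha^{(j)}}u,
\end{equation*}
and combined with \eqref{343}/\eqref{345} and the inductive hypothesis $\|D^\gamma u\|_{L^\infty}\leq C\tilde h^{|\gamma|}\tilde M_{|\gamma|}$ (resp.\ with the factor $K_\gamma$) for $|\gamma|<|\alpha|$. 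Condition $(M.2)$ for $\tilde M_p$ then collapses the multinomial sum to the desired bound at order $|\alpha|$; the summation in $\beta$ converges thanks to the decay of $u$ established in the first bootstrap and the factor $h^{|\beta|}$ in \eqref{343}/\eqref{345}.

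\textbf{Obstacle and part (ii).} The main technical difficulty is that two infinite series intervene -- the asymptotic expansion of the symbol of $A$ and the series defining $F[u]$ -- so each step of each bootstrap introduces a controlled loss in the constants $h$ and $\tilde h$; the norm estimates of Section \ref{sec1} are calibrated precisely to keep these losses compatible across both iterations and across the two cases $(M_p)$/$\{M_p\}$. For part (ii), when $F[u]$ is a finite sum the $\beta$-convergence issue disappears, so the Faà di Bruno induction above closes with the sharper target sequence $M_\alpha$ in place of $\tilde M_\alpha$; combining this improved derivative bound with the decay already obtained in (i) yields exactly the definition of $u\in\mathcal{S}^*(\RR^d)$.
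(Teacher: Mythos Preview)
Your outline has the right skeleton---parametrix plus bootstrap---but the bootstrap you describe has a real gap, and it is not the route the paper takes.

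\textbf{The gap.} You run both bootstraps pointwise in $L^{\infty}$: you claim that after one application of the parametrix $P$ to $F[u]$ (a function of $\tilde M$-growth) one already has $|u(x)|\leq C_1 e^{-M(h_1|x|)}$, and then you iterate on the exponent $h_j$. But nothing in Section~\ref{sec1} gives a weighted $L^{\infty}\to L^{\infty}$ mapping property for $P$; the only mapping properties available are the $H^s$ operator-norm bounds of Proposition~\ref{330} and Lemmas~\ref{340}--\ref{350}. Translating the symbol decay $|b(x,\xi)|\lesssim |a(x,\xi)|^{-1}$ from \eqref{grwww} into a statement about pointwise decay of $PF[u]$ is a nontrivial oscillatory-integral estimate that you would have to supply. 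The subsequent claim that the exponent improves ($h_2>h_1$, etc.) and, in the Beurling case, that the sequence $h_j$ tends to infinity, is likewise unjustified: each step also multiplies the constant by powers of $C_1$, and the interplay with the $h$-dependent constant in \eqref{343} is not controlled. The same problem recurs in your smoothness induction: the commutator term $P[D^{\alpha},A]u$ is handled in the paper via the $H^s$ bound of Lemma~\ref{340}, not by any $L^{\infty}$ estimate.

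\textbf{How the paper actually proceeds.} The paper works entirely in $H^s$, using that $H^s$ is an algebra for $s>d/2$. Instead of iterating on the exponent, it fixes a small $h$ once and for all and introduces the truncated sums
\[
H^{s,h,(r'_p)}_N[u]=\sum_{|\alpha|\leq N}\frac{h^{|\alpha|}R'_{\alpha}}{M_{\alpha}}\|x^{\alpha}u\|_{H^s},
\qquad
E^{s,h,(r'_p)}_N[u]=\sum_{|\alpha|\leq N}\frac{h^{|\alpha|}R'_{\alpha}}{M_{\alpha}}\|\partial^{\alpha}u\|_{H^s}.
\]
Multiplying \eqref{maineq} by $x^{\alpha}$ (resp.\ applying $\partial^{\alpha}$), passing to the parametrix, and summing, one obtains a closed recursion of the form $H_N\leq C+3\varepsilon H_{N-1}$ (Lemmas~\ref{367}, \ref{360} for decay; Lemmas~\ref{3910}, \ref{3920} for regularity), which gives a uniform-in-$N$ bound directly. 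The ``for every $h$'' conclusion in the Beurling case comes not from iterating but from the auxiliary sequence $(r'_p)\in\mathfrak{R}$ manufactured via Lemma~\ref{115} and Komatsu's Lemma~3.4: one proves finiteness of the single stronger norm $H^{s,h,(r'_p)}_N[u]$, and this automatically dominates $\|u\|_{s,\tilde h}$ for every $\tilde h$. The pointwise statements in part $i)$ then follow from Sobolev embedding \emph{after} the $H^s$ estimates are in hand, not before.
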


\section{Examples}
\label{examples}
In this section we give an example of a non standard elliptic operator $a$, given by (\ref{e1}) below, and of nonlinear terms to which Theorem \ref{mainthm} can be applied when $M_p=p!^{l}$, $l>1$. This examples underline the novelties of this paper with respect to the results obtained in \cite{CGR1, CGR3, CGR4}.

\noindent \textbf{Example 1.} Let $A_p= p!^{v}$, with $1<v<l$ and $0<\rho<1$ is such that $v\leq l\rho$. Let $a_0:(0,\infty)\rightarrow (0,\infty)$ be given by $\ds a_0(\lambda)=\sum_{n=0}^{\infty}\frac{h^n \lambda^n}{n!^{l+l'}}$. Then
\beqs
a_0^{(k)}(\lambda)&=&\frac{1}{\lambda^{k}}\sum_{n\geq k}\frac{n!}{(n-k)!}\cdot\frac{h^n \lambda^n}{n!^{l+l'}}\\
&=&\frac{h^{k/(l+l')}}{\lambda^{k(l+l'-1)/(l+l')}}\sum_{n\geq k}\left(\frac{h^n \lambda^n}{n!^{l+l'}}\right)^{(l+l'-1)/(l+l')}\cdot \left(\frac{h^{n-k} \lambda^{n-k}}{(n-k)!^{l+l'}}\right)^{1/(l+l')}\\
&\leq&\frac{h^{k/(l+l')}}{\lambda^{k(l+l'-1)/(l+l')}}\left(\sum_{n\geq k}\frac{h^n \lambda^n}{n!^{l+l'}}\right)^{(l+l'-1)/(l+l')}\cdot\left(\sum_{n\geq k} \frac{h^{n-k} \lambda^{n-k}}{(n-k)!^{l+l'}}\right)^{1/(l+l')}\\
&\leq& \frac{h^{k/(l+l')}}{\lambda^{k(l+l'-1)/(l+l')}}a_0(\lambda),
\eeqs
where, in the first inequality we used Holder's inequality with $p=(l+l')/(l+l'-1)$ and $q=l+l'$. Define
\begin{equation}\label{e1}
a(w)=a_0(\langle w\rangle), \;w\in\RR^{2d}
\end{equation}
Then $a\in\mathcal{C}^{\infty}(\RR^{2d})$. To prove that $a \in \Gamma^{*,\infty}_{A_p,\rho}(\RR^{2d})$ and that it is elliptic, we need some preliminary results. First we recall the following multidimensional variant of the Fa\`a di Bruno formula (see \cite[Corollary 2.10]{faadib}).

\begin{proposition}(\cite{faadib})
Let $|\alpha|=n\geq 1$ and $h(x_1,...,x_d)=f(g(x_1,...,x_d))$ with $g\in \mathcal{C}^{n}$ in a neighbourhood of $x^0$ and $f\in\mathcal{C}^n$ in a neighbourhood of $y^0=g(x^0)$. Then
\beqs
\partial^{\alpha}h(x^0)=\sum_{r=1}^{n}f^{(r)}(y^0)\sum_{p(\alpha,r)}\alpha!\prod_{j=1}^n \frac{\left(\partial^{\alpha^{(j)}}g(x^0)\right)^{k_j}}{k_j! \left(\alpha^{(j)}!\right)^{k_j}},
\eeqs
where
\beqs
p(\alpha,r)&=&\Bigg\{\left(k_1,...,k_n; \alpha^{(1)},...,\alpha^{(n)}\right)\Big|\, \mbox{for some}\, 1\leq s\leq n, k_j=0 \mbox{ and } \alpha^{(j)}=0\\
&{}&\mbox{for } 1\leq j\leq n-s;\, k_j>0 \mbox{ for } n-s+1\leq j\leq n; \mbox{ and}\\
&{}&0\prec \alpha^{(n-s+1)}\prec...\prec \alpha^{(n)} \mbox{ are such that}\\
&{}&\sum_{j=1}^n k_j=r,\, \sum_{j=1}^n k_j\alpha^{(j)}=\alpha\Bigg\}.
\eeqs
\end{proposition}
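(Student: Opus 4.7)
The statement is the classical multivariable Faà di Bruno formula for the composition $h=f\circ g$ of a one-variable $f$ with a $d$-variable $g$, and my plan is to prove it by computing the $n$-th Taylor coefficient of $h$ at $x^0$ in two different ways.

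First I would invoke Taylor's theorem with Peano remainder to expand, near $y^0=g(x^0)$ and $x^0$ respectively,
\beqs
f(y) &=& \sum_{r=0}^n \frac{f^{(r)}(y^0)}{r!}(y-y^0)^r + o(|y-y^0|^n),\\
g(x)-g(x^0) &=& \sum_{1\leq |\beta|\leq n} \frac{\partial^{\beta} g(x^0)}{\beta!}(x-x^0)^{\beta} + o(|x-x^0|^n),
\eeqs
and, since $g(x)-g(x^0)=O(|x-x^0|)$, substitute the second expansion into the first to obtain $h(x)=\sum_{r=0}^n \frac{f^{(r)}(y^0)}{r!}(g(x)-g(x^0))^r + o(|x-x^0|^n)$. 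It then suffices to show that $\alpha!$ times the coefficient of $(x-x^0)^{\alpha}$ in the right-hand side matches the formula in the Proposition.

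Next I would apply the multinomial theorem to each power $(g(x)-g(x^0))^r$. Indexing the distinct multi-indices $\beta$ with $1\leq |\beta|\leq n$ arising in the expansion of $g$ as $\alpha^{(j)}$ with multiplicities $k_j\geq 1$, the contribution of the order-$r$ term to the coefficient of $(x-x^0)^{\alpha}$ with $|\alpha|=n$ is
\[
\frac{f^{(r)}(y^0)}{r!}\cdot \frac{r!}{\prod_j k_j!}\prod_j \left(\frac{\partial^{\alpha^{(j)}} g(x^0)}{\alpha^{(j)}!}\right)^{k_j},
\]
summed over all such configurations with $\sum_j k_j=r$ and $\sum_j k_j\alpha^{(j)}=\alpha$. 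Contributions involving the Peano remainders produce terms of order $o(|x-x^0|^n)$ and hence do not affect this coefficient, which is a routine check since $\sum_j k_j|\alpha^{(j)}|=|\alpha|=n$.

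Finally I would identify this index set with $p(\alpha,r)$: fixing a total order $\prec$ on $\NN^d$, each valid unordered collection $\{(\alpha^{(j)},k_j)\}_{j}$ consists of exactly $s\leq n$ nonzero entries and can be enumerated uniquely by listing them as $0\prec \alpha^{(n-s+1)}\prec\cdots\prec \alpha^{(n)}$ while padding the remaining $n-s$ slots with zero entries $(k_j=0,\alpha^{(j)}=0)$, which contribute the factor $1$ in the product. Multiplying by $\alpha!$ then produces the stated formula. The main obstacle is exactly this last combinatorial matching — verifying that the ordering convention in $p(\alpha,r)$ corresponds bijectively to the unordered sum arising from the multinomial expansion — but once a total order on $\NN^d$ is fixed, it is a direct check.
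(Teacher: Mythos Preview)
The paper does not give its own proof of this proposition: it is quoted verbatim from \cite{faadib} (Constantine--Savits) and stated without proof, serving only as a tool for the subsequent Lemma \ref{1110001} and the estimates in Example 1. So there is nothing in the paper to compare your argument against.

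That said, your approach via Taylor expansion with Peano remainder and the multinomial theorem is a standard and correct route to the multivariable Fa\`a di Bruno formula. The substitution step is justified since $g(x)-g(x^0)=O(|x-x^0|)$, so the remainder $o(|y-y^0|^n)$ in the $f$-expansion becomes $o(|x-x^0|^n)$ after composition; the coefficient matching via the multinomial expansion is exactly as you describe; and the final identification of the index set with $p(\alpha,r)$ is indeed just the observation that every unordered multiset of pairs $\{(\alpha^{(j)},k_j)\}$ has a unique ordered representative once the total order $\prec$ is fixed. Your caution about this last step is well placed but the check is routine.
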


The relation $\prec$ used in this proposition is defined in the following way (cf. \cite{faadib}). We say that $\beta\prec\alpha$ when one of the following holds:
\begin{itemize}
\item[$(i)$] $|\beta|<|\alpha|$;
\item[$(ii)$] $|\beta|=|\alpha|$ and $\beta_1<\alpha_1$;
\item[$(iii)$] $|\beta|=|\alpha|$, $\beta_1=\alpha_1,...,\beta_k=\alpha_k$ and $\beta_{k+1}<\alpha_{k+1}$ for some $1\leq k<d$.
\end{itemize}

Before we continue, we need the following technical result.

\begin{lemma}\label{1110001}
For $\beta\in\NN^{d}$, the following estimate holds:
\beqs
\sum_{r=1}^{|\beta|}\sum_{p(\beta,r)}\frac{r!}{k_1!\cdot ... \cdot k_{|\beta|}!}\leq 2^{|\beta|(d+1)}
\eeqs
\end{lemma}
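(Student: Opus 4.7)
The plan is to recognize the inner sum as counting ordered compositions of $\beta$ into nonzero multi-index parts, and then bound this count by a coordinate-wise stars-and-bars argument.

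First I would fix $r$ and identify $\sum_{p(\beta,r)} r!/(k_1!\cdots k_{|\beta|}!)$ as the number of ordered $r$-tuples $(\gamma_1,\dots,\gamma_r)$ with each $\gamma_i\in\NN^d\setminus\{0\}$ and $\gamma_1+\cdots+\gamma_r=\beta$. Indeed an element of $p(\beta,r)$ specifies a multiset of distinct nonzero multi-indices $\alpha^{(n-s+1)}\prec\dots\prec\alpha^{(n)}$ with positive multiplicities $k_{n-s+1},\dots,k_n$ summing to $r$ (entries with $j\leq n-s$ contribute the factor $k_j!=0!=1$), and the multinomial coefficient $r!/\prod_j k_j!$ counts the linear orderings of that multiset. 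Every ordered tuple of nonzero multi-indices with sum $\beta$ arises from exactly one such partition, which yields the claimed identity.

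I would then bound this count by allowing the parts to equal zero and counting coordinate-wise: the number of ordered $r$-tuples in $(\NN^d)^r$ summing to $\beta$ is $\prod_{k=1}^d \binom{\beta_k+r-1}{r-1}$, and the crude estimate $\binom{n+r-1}{r-1}\leq 2^{n+r-1}$ yields
\[
\sum_{p(\beta,r)} \frac{r!}{k_1!\cdots k_{|\beta|}!} \;\leq\; 2^{|\beta|+d(r-1)}.
\]
Summing the geometric progression $\sum_{r=1}^{|\beta|} 2^{|\beta|+d(r-1)} = 2^{|\beta|}(2^{d|\beta|}-1)/(2^d-1)$ and invoking $d\geq 1$ produces the advertised bound $2^{(d+1)|\beta|}$, the case $|\beta|=0$ being trivial.

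The only delicate point is the combinatorial identification in the first step: one must keep track of how the strict ordering convention $\prec$ interacts with the padding $k_j=0$ for $j\leq n-s$ to confirm that no ordered tuple is double-counted and none is missed. Once this bijection is in hand, the remainder is routine binomial estimation.
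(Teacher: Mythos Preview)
Your argument is correct and takes a genuinely different route from the paper. The paper's proof is a generating-function trick: it applies the Fa\`a di Bruno formula to the composition $f\circ g$ with $f(\lambda)=\lambda^{|\beta|}$ and $g(x)=\prod_j(-x_j)^{-1}$, evaluated at $x=(-1,\dots,-1)$. Computing $\partial^{\beta}(f\circ g)(-1,\dots,-1)$ directly gives $\prod_{j:\beta_j\neq 0}|\beta|(|\beta|+1)\cdots(|\beta|+\beta_j-1)$, while the Fa\`a di Bruno expansion produces $\sum_r\binom{|\beta|}{r}\sum_{p(\beta,r)}r!/\prod_j k_j!$ (using $f^{(r)}(1)=|\beta|!/(|\beta|-r)!$ and $\partial^{\alpha}g(-1,\dots,-1)=\alpha!$); dropping the binomial coefficients $\binom{|\beta|}{r}\geq 1$ and bounding each factor by $\binom{|\beta|+\beta_j}{\beta_j}\leq 2^{|\beta|+\beta_j}$ then yields the claim. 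Your approach instead reads the inner sum combinatorially as the number of ordered compositions of $\beta$ into $r$ nonzero multi-index parts, relaxes to arbitrary parts, and estimates coordinate-wise via stars and bars. Your method is more elementary and transparent, and gives an intermediate bound for each fixed $r$ that the paper's global identity does not; the paper's device, on the other hand, illustrates how the Fa\`a di Bruno formula can be turned against itself to control its own combinatorial coefficients, which is thematically in line with the rest of the example where the formula is repeatedly applied.
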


\begin{proof} Let $f(\lambda)=\lambda^{|\beta|}$ and $g(x)=(-x_1)^{-1}\cdot...\cdot(-x_d)^{-1}$. We apply the Fa\`a di Bruno formula to the composition $g(f(x))$ at the point $x=(-1,...,-1)$:
\beqs
\prod_{\substack{j=1\\ \beta_j\neq 0}}^d \left(|\beta|(|\beta|+1)\cdot...\cdot(|\beta|+\beta_j-1)\right)&=&\partial^{\beta}(g\circ f)(-1)\\
&=& \sum_{r=1}^{|\beta|}\frac{|\beta|!}{(|\beta|-r)!} \sum_{p(\beta,r)}\beta!\prod_{j=1}^{|\beta|}\frac{1}{k_j!}.
\eeqs
Hence, we obtain
\beqs
\prod_{\substack{j=1\\ \beta_j\neq 0}}^d \frac{|\beta|(|\beta|+1)\cdot...\cdot(|\beta|+\beta_j-1)}{\beta_j!}&=&\sum_{r=1}^{|\beta|}{|\beta|\choose r} \sum_{p(\beta,r)}\frac{r!}{k_1!\cdot ... \cdot k_{|\beta|}!}\\
&\geq& \sum_{r=1}^{|\beta|}\sum_{p(\beta,r)}\frac{r!}{k_1!\cdot ... \cdot k_{|\beta|}!}.
\eeqs
From this, the desired inequality follows, since
\beqs
\frac{|\beta|(|\beta|+1)\cdot...\cdot(|\beta|+\beta_j-1)}{\beta_j!}=\frac{(|\beta|+\beta_j)!} {\beta_j!(|\beta|-1)!(|\beta|+\beta_j)}\leq {|\beta|+\beta_j\choose \beta_j}\leq 2^{|\beta|+\beta_j}.
\eeqs
\end{proof}

If we apply the Fa\`a di Bruno formula to the composition of $a_0$ and $w\mapsto\langle w\rangle$ and use the well known estimate $|\partial^{\alpha}\langle w\rangle|\leq C' 2^{|\alpha|+1}|\alpha|! \langle w\rangle^{1-|\alpha|}$, $\alpha\in\NN^{2d}$, $w\in\RR^{2d}$, we have
\beqs
\left|D^{\alpha}a(w)\right|&\leq& \sum_{r=1}^{|\alpha|}\left|(D^r a_0)(\langle w\rangle)\right| \sum_{p(\alpha,r)}\alpha!\prod_{j=1}^{|\alpha|}\frac{\left|D^{\alpha^{(j)}}\langle w\rangle\right|^{k_j}}{k_j! (\alpha^{(j)}!)^{k_j}}\\
&\leq&\sum_{r=1}^{|\alpha|}\frac{h^{r/(l+l')}a(w)}{\langle w\rangle^{r(l+l'-1)/(l+l')}} \sum_{p(\alpha,r)}\alpha!\prod_{j=1}^{|\alpha|}\frac{C'^{k_j} 2^{|\alpha^{(j)}|k_j+k_j}\left(\left|\alpha^{(j)}\right|!\right)^{k_j} \langle w\rangle^{k_j-|\alpha^{(j)}|k_j}}{k_j! (\alpha^{(j)}!)^{k_j}}\\
&\leq&\frac{2^{|\alpha|}C_0^{|\alpha|}a(w)}{\langle w\rangle^{|\alpha|(l+l'-1)/(l+l')}} \sum_{r=1}^{|\alpha|} \sum_{p(\alpha,r)}\alpha!\prod_{j=1}^{|\alpha|}\frac{\left(\left|\alpha^{(j)}\right|!\right)^{k_j}}{(\alpha^{(j)}!)^{k_j}} \prod_{j=1}^{|\alpha|}\frac{1}{k_j!},
\eeqs
where we denote by $C_0$ the quantity $2C'h^{1/(l+l')}$. One can easily prove that for $\beta^{(1)},...,\beta^{(n)}\in\NN^{2d}$,
\beqs
\frac{\left(\beta^{(1)}+...+\beta^{(n)}\right)!}{\beta^{(1)}!\cdot...\cdot \beta^{(n)}!}\leq \frac{\left|\beta^{(1)}+...+\beta^{(n)}\right|!}{\left|\beta^{(1)}\right|!\cdot...\cdot \left|\beta^{(n)}\right|!}.
\eeqs
In fact one easily verifies this inequality for $n=2$ and the general case follows by induction. We obtain
\beq\label{11223}
\alpha!\prod_{j=1}^{|\alpha|}\frac{\left(\left|\alpha^{(j)}\right|!\right)^{k_j}}{(\alpha^{(j)}!)^{k_j}} \leq |\alpha|!.
\eeq
Using this estimate, together with Lemma \ref{1110001}, we conclude that
\beqs
\left|D^{\alpha}a(w)\right|\leq\frac{\left(2^{2d+2}C_0\right)^{|\alpha|}|\alpha|!a(w)}{\langle w\rangle^{|\alpha|(l+l'-1)/(l+l')}}.
\eeqs
Hence, if we take $l'>0$ large enough such that $(l+l'-1)/(l+l')\geq \rho$, we obtain $a\in \Gamma^{*,\infty}_{A_p,\rho}(\RR^{2d})$ and the growth condition $ii)$ of Definition \ref{elliptic} is satisfied for $a$. By the definition of $a$ the lower bound $i)$ of Definition \ref{elliptic} trivially holds for $\tilde{M}_p=p!^{l+l'}$ and some $m>0$ in the $(M_p)$ case (resp. for $\tilde{M}_p=p!^{l+l'/2}$ and $k_p=p^{l'/2}$ in the $\{M_p\}$ case). Hence $a$ is elliptic.

Symbols given in the Introduction, correspond to $m=l+l'$ with suitable $ \rho$. Actually,  applying the Fa\`a di Bruno formula to the composition of $\lambda\mapsto \lambda^{1/m}$ and $w\mapsto \langle w\rangle$, by the same technique as above using (\ref{11223}), Lemma \ref{1110001} and the estimate $\left|\left(\lambda^{1/m}\right)^{(r)}\right|\leq r!\lambda^{-r+1/m}$, $\lambda>0$, one can prove $\ds \left|D^{\alpha}\left(\langle w\rangle^{1/m}\right)\right|\leq C^{|\alpha|}|\alpha|!\langle w\rangle^{-|\alpha|+1/m}$. Applying again the Fa\`a di Bruno formula to the composition of $\lambda\mapsto e^{\lambda}$ and $w\mapsto \langle w\rangle^{1/m}$ we obtain the estimate $$\left|D^{\alpha}\left(e^{\langle w\rangle^{1/m}}\right)\right|\leq C^{|\alpha|}|\alpha|!e^{\langle w\rangle^{1/m}} \langle w\rangle^{-|\alpha|(m-1)/m}$$ which implies that such symbols can be considered in Theorem \ref{mainthm}.\\

\noindent 
\textbf{Example 2.} An interesting nontrivial example of nonlinear term satisfying the assumptions of Theorem \ref{mainthm} can be given in the following way. Define $p_{\beta}$ by
\beqs
p_{\beta}(x)=\sum_{\alpha\in\NN^d}c_{\alpha,\beta}x^{\alpha+\beta}
\eeqs
where $c_{\alpha,\beta}$ satisfy the following condition: for every $h>0$ there exists $C>0$ such that $\left|c_{\alpha,\beta}\right|\leq C h^{|\alpha|+|\beta|}/\tilde{M}_{\alpha+\beta}$ (resp. $\left|c_{\alpha,\beta}\right|\leq C h^{|\alpha|+|\beta|}/(\tilde{M}_{\alpha+\beta}\prod_{j=1}^{|\alpha|+|\beta|}k_j)$). By \cite[Proposition 4.5.]{Komatsu1} and by simple calculation we obtain that $p_{\beta}(x)=R^{-1}_{\beta}P_{\beta}(x)$ satisfy the condition of Theorem \ref{mainthm}, where $R_{\beta}=\prod_{j=1}^{|\beta|}r_j$ for some $(r_p)\in\mathfrak{R}$ (for example $r_p=p^{\epsilon}$, $\epsilon>0$) and $P_{\beta}$ are entire functions which satisfy the estimate: for every $h>0$ there exists $C>0$ such that $\left|P_{\beta}(z)\right|\leq C e^{\tilde{M}(h|z|)}$, $z\in\CC^d$ (resp. $\left|P_{\beta}(z)\right|\leq C e^{\tilde{N}_{k_p}(h|z|)}$, $z\in\CC^d$), i.e. $P_{\beta}$ are ultrapolynomials of class $\{\tilde{M}_p\}$ (resp. $\{\tilde{N}_p\}$, with $\tilde{N}_p=\tilde{M}_p\prod_{j=1}^p k_j$).  Interesting examples of this form are $F[u]=P(x)\cos u$ and $F[u]=P(x)e^{u^k}$, $k\in\ZZ_+$, where $P(x)$ is an ultrapolynomial with growth condition as stated above (to verify that these are indeed examples of the stated form one only needs to write $\cos u$ and $e^{u^k}$ in power series in $u$). These examples fit well in our analysis since we presumed in Theorem \ref{mainthm}
that $u\in H^s(\mathbb R^d)$ for some $s>d/2$, which implies that $u$ is bounded over $\mathbb R^d$.

\section{Pseudodifferential operators on $\mathcal{S}^{*}(\RR^d), \mathcal{S}^{*\prime}(\RR^d)$} \label{sec1}
In this section we recall some results contained in \cite{BojanS} and concerning the calculus for pseudodifferential operators with symbols in  $\Gamma^{\ast, \infty}_{A_p, \rho}(\RR^{2d})$. For the purposes of this paper we need to modify slightly some statements with respect to \cite{BojanS}. The proof of these new assertions is completely analogous to the original ones and do not deserve to be repeated here.

\par
Now we recall the notion of asymptotic expansion for symbols in $\Gamma^{\ast, \infty}_{A_p, \rho}(\RR^{2d})$, cf. \cite[Definition 2]{BojanS}.

\begin{definition}
Let $M_p$ and $A_p$ be as in the definition of $\Gamma_{A_p, \rho}^{*, \infty}(\RR^{2d})$ and let $m_0=0, m_p=M_p/M_{p-1}, p \in \ZZ_+$. We denote by $FS^{\ast, \infty}_{A_p, \rho}(\RR^{2d})$ the space of all formal sums $\sum_{j \in \NN}a_j$
such that for some $B>0$, $a_j \in \mathcal{C}^{\infty}(\textrm{int} \, Q_{Bm_j}^c)$ and satisfy the following condition: there exists $m>0$ such that for every $h >0$ (resp. there exists $h >0$ such that for every $m>0$) we have
$$\sup_{j \in \NN} \sup_{\alpha, \beta \in \NN^d} \sup_{(x,\xi) \in Q^c_{Bm_j}} \frac{|D_{\xi}^{\alpha}D_{x}^{\beta}a_j(x,\xi)|\langle (x,\xi)\rangle^{\rho(|\alpha+\beta|+2j)}e^{-M(m|x|)-M(m|\xi|)}}{h^{|\alpha+\beta|+2j}A_{\alpha}A_\beta A_j^2}<\infty.$$
\end{definition}

Notice that any symbol $a \in \Gamma^{\ast, \infty}_{A_p, \rho}(\RR^{2d})$ can be regarded as an element $\sum\limits_{j \in \NN}a_j$ of $FS^{\ast, \infty}_{A_p, \rho}(\RR^{2d})$ with $a_0=a, a_j =0$ for $j \geq 1.$

\begin{definition}
\label{asexp}
A symbol $a \in \Gamma^{\ast, \infty}_{A_p, \rho}(\RR^{2d})$ is equivalent to $\sum_{j \in \NN}a_j \in FS^{\ast, \infty}_{A_p, \rho}(\RR^{2d})$ (we write $a \sim \sum_{j \in \NN}a_j $ in this case) if there exist $m,B >0$ such that for every $h >0$ (resp. there exist $h,B>0$ such that for every $m >0$) the following condition holds:
$$\sup_{N \in \ZZ_+} \sup_{\alpha, \beta \in \NN^d} \sup_{(x,\xi) \in Q^c_{Bm_N}} \frac{\Big|D_{\xi}^{\alpha}D_{x}^{\beta}\big(a(x,\xi) - \sum\limits_{j<N}a_j(x,\xi)\big)\Big|e^{-M(m|x|)-M(m|\xi|)}}{h^{|\alpha+\beta|+2N}A_{\alpha}A_\beta A_N^2 \langle (x,\xi)\rangle^{-\rho(|\alpha+\beta|+2N)}}<\infty.$$
\end{definition}

An operator $a(x,D)$ with symbol $a \sim 0$ is $*$-regularizing, namely it extends to a linear and continuous map from $\SSS^{* \prime}(\RR^d)$ to $\SSS^*(\RR^d)$, see \cite[Theorem 3]{BojanS}.
Moreover, for every sum $\sum_{j \in \NN}a_j \in FS^{\ast, \infty}_{A_p, \rho}(\RR^{2d})$ one can find a symbol $a \in \Gamma^{\ast, \infty}_{A_p, \rho}(\RR^{2d})$ such that $a \sim \sum_{j \in \NN}a_j,$ cf. \cite[Theorem 4]{BojanS}. Actually by the same argument one can prove the following more precise assertion.

\begin{proposition}
Let $g$ be a positive continuous function such that $g(w)$ and $1/g(w)$ have ultrapolynomial growth of class * and let $U$ be a subset of $FS_{A_p,\rho}^{*,\infty}\left(\RR^{2d}\right)$ for which there exists $B>0$ such that for every $h>0$ there exists $C>0$ (resp. there exist $B,h,C>0$) such that
\beqs
\sup_{j\in\NN}\sup_{\alpha,\beta}\sup_{(x,\xi)\in Q_{Bm_j}^c}\frac{\left|D^{\alpha}_{\xi}D^{\beta}_x a_j(x,\xi)\right|
\langle (x,\xi)\rangle^{\rho|\alpha|+\rho|\beta|+2j\rho}}
{h^{|\alpha|+|\beta|+2j}A_{\alpha}A_{\beta}A_j^2 g(x,\xi)}\leq C
\eeqs
for all $\sum_{j \in \NN} a_j\in U$. Then for every $\tilde{h}>0$ there exists $C>0$ (resp. there exist $\tilde{h},C>0$) such that the following condition holds: for every sum $\sum a_j \in U$ there exists a symbol $a \sim \sum_{j \in \NN}a_j$ satisfying the following estimate:
\beqs
\left|D^{\alpha}_{\xi} D^{\beta}_x a(x,\xi)\right|\leq C\frac{\tilde{h}^{|\alpha|+|\beta|}A_{\alpha} A_{\beta} g(x,\xi)} {\langle (x,\xi)\rangle^{\rho(|\alpha|+|\beta|)}}.
\eeqs
\end{proposition}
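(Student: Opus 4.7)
The idea is to re-run the proof of Theorem 4 of \cite{BojanS}, which realises an asymptotic sum by the standard Borel-type cutoff construction, and to track the dependence of every constant on the data so as to see uniformity over $U$. Concretely, one fixes once and for all a Denjoy--Carleman type partition of unity of class $\{M_{p}\}$: a non-negative $\chi \in \mathcal{C}^{\infty}(\RR^{2d})$, equal to $0$ for $\langle w\rangle\leq 1$ and to $1$ for $\langle w\rangle\geq 2$, whose derivatives satisfy $|D^{\gamma}\chi(w)|\leq C_{0} h_{0}^{|\gamma|}M_{\gamma}$ for some $h_{0},C_{0}>0$; such $\chi$ exists by the constructions recalled in \cite{Komatsu1, BojanS}. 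Then, for a suitably chosen increasing sequence $R_{j}\to\infty$ of the form $R_{j}=B' m_{j}$ (with $B'$ depending only on $B$, $h$, $\rho$, $M_{p}$, $A_{p}$, and the ultrapolynomial bounds on $g$ and $1/g$), I would set $\chi_{j}(w)=\chi(w/R_{j})$ and define, for every $\sum_{j}a_{j}\in U$,
\begin{equation*}
a(x,\xi)\;=\;\sum_{j\in\NN}\chi_{j}(x,\xi)\,a_{j}(x,\xi),
\end{equation*}
where each $\chi_{j}a_{j}$ is extended by $0$ outside $\mathrm{int}\,Q_{Bm_{j}}^{c}$. The sequence $(R_{j})$ is chosen large enough to make $\chi_{j}a_{j}$ a legitimate smooth function on $\RR^{2d}$ and so that for each compact set only finitely many terms contribute.

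Next I would verify (i) $a\in\mathcal{C}^{\infty}(\RR^{2d})$, (ii) $a$ satisfies the stated $g$-weighted bound, and (iii) $a\sim\sum_{j}a_{j}$ in the sense of Definition \ref{asexp}. Steps (i) and (iii) follow exactly as in \cite{BojanS}. For step (ii) I would apply Leibniz to $D^{\alpha}_{\xi}D^{\beta}_{x}(\chi_{j}a_{j})$: the derivatives landing on $\chi_{j}$ contribute a factor $R_{j}^{-|\gamma|}h_{0}^{|\gamma|}M_{\gamma}$ supported in an annulus where $\langle (x,\xi)\rangle\sim R_{j}$, while the derivatives landing on $a_{j}$ contribute the hypothesis bound with constants $B$, $h$, $C$. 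The factor $R_{j}^{-|\gamma|}\sim m_{j}^{-|\gamma|}=(M_{j-1}/M_{j})^{|\gamma|}$ combined with $A_{j}^{2}$ and the localisation $\langle(x,\xi)\rangle^{-2j\rho}$ is then controlled, uniformly in $j$, by $(M.1)$--$(M.3)$ for $M_{p}$, by $A_{p}\subset M_{p}^{\rho}$, and by the fact that $A_{p}$ satisfies $(M.2)$ and $(M.3)'$. Summing over $j$ and $\gamma\leq\alpha$, $\delta\leq\beta$ yields a bound of the form $C\tilde h^{|\alpha|+|\beta|}A_{\alpha}A_{\beta}g(x,\xi)\langle(x,\xi)\rangle^{-\rho(|\alpha|+|\beta|)}$ with $\tilde h$ depending only on $h,h_{0},H,\rho$, and $M_{p},A_{p}$.

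The key observation for the parametric statement is now that none of the objects just constructed depends on the particular sum $\sum_{j}a_{j}\in U$: the cutoff $\chi$, the radii $R_{j}$, the constants $h_{0}$, $B'$, and hence the output parameters $\tilde h$ and $C$, are all determined by $B$, $h$, $C$, $\rho$, $M_{p}$, $A_{p}$ and $g$ alone. Since the hypothesis on $U$ provides a single $B$ (and, in the Beurling case, a single $C$ for each $h$; in the Roumieu case, a single triple $B$, $h$, $C$) that works uniformly across $U$, the constants $\tilde h$ and $C$ in the conclusion are also uniform across $U$, as required.

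\textbf{Main obstacle.} The construction itself is standard, and the only technical point is the bookkeeping: one must check that the factor $R_{j}^{-|\gamma|}$ coming from derivatives of $\chi_{j}$ absorbs both the inclusion $A_{p}\subset M_{p}^{\rho}$ and the $A_{j}^{2}$ appearing in the hypothesis, and that the resulting series over $j$ converges with $j$-independent tails. This is routine in the Roumieu case once $(r_{p})$-type parameters are fixed at the start, but it forces one to carry the dependence of $\tilde h$ on $h$, $h_{0}$ and $H$ through a long chain of $(M.2)$ estimates, which is the only delicate step in the argument.
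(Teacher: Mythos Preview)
Your plan is correct and is precisely the approach the paper takes: the proposition is stated without a separate proof, the paper simply noting that it follows ``by the same argument'' as \cite[Theorem~4]{BojanS}, i.e.\ the Borel-type construction $a=\sum_j(1-\chi_j)a_j$ with the dilated cutoffs $\chi_j(x,\xi)=\chi(x/(Rm_j),\xi/(Rm_j))$, together with the observation that the cutoffs, the radii $Rm_j$, and hence all output constants depend only on the common data $B,h,C,\rho,A_p,M_p,g$ and not on the individual formal sum in $U$. One small correction to your sketch: the cutoff $\chi$ (equivalently $\varphi,\psi$) should be taken in $\DD^{(A_p)}$ resp.\ $\DD^{\{A_p\}}$ rather than in the $M_p$-class, so that its derivative bounds read $|D^{\gamma}\chi|\leq C_0 h_0^{|\gamma|}A_{\gamma}$ and the Leibniz estimate returns the factors $A_{\alpha}A_{\beta}$ required in the conclusion.
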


In \cite[Theorem 7]{BojanS} it was proved that the composition of two operators $b(x,D)$ and $a(x,D)$ with symbols in $\Gamma^{\ast, \infty}_{A_p, \rho}(\RR^{2d})$ is the sum of an operator $f_{a,b}(x,D)$ with symbol $f_{a,b} \in \Gamma^{\ast, \infty}_{A_p, \rho}(\RR^{2d})$, with $f_{a,b} \sim \sum_{j}f_{a,b,j}$, where
\beq \label{ascomp}
f_{a,b,j}(x,\xi)=\sum_{|\alpha|=j}\frac{1}{\alpha!}\partial^{\alpha}_{\xi} b(x,\xi) D^{\alpha}_x a(x,\xi),
\eeq
and of a *-regularizing operator $T_{a,b}.$ In fact, $f_{a,b}=\sum_j (1-\chi_j)f_{a,b,j}$ where $\chi_j$ is defined in the following way (cf. the proof of \cite[Theorem 4]{BojanS}). Take $\varphi,\psi\in\DD^{(A_p)}\left(\RR^{d}\right)$, in the $(M_p)$ case, resp. $\varphi,\psi\in\DD^{\{A_p\}}\left(\RR^{d}\right)$ in the $\{M_p\}$ case, such that $0\leq\varphi,\psi\leq 1$, $\varphi(x)=1$ when $\langle x\rangle\leq 2$, $\psi(\xi)=1$ when $\langle\xi\rangle\leq 2$ and $\varphi(x)=0$ when $\langle x\rangle\geq 3$, $\psi(\xi)=0$ when $\langle\xi\rangle\geq 3$. Then define $\chi(x,\xi)=\varphi(x)\psi(\xi)$, $\ds \chi_n(x,\xi)=\chi\left(\frac{x}{Rm_n},\frac{\xi}{Rm_n}\right)$ for $n\in\ZZ_+$ and $\chi_0(x,\xi)=0$, where $m_n=M_n/M_{n-1}$ and $R>0$ is large enough.

\begin{proposition}\label{composition}
Let $U_1$ and $U_2$ be bounded subsets of $\Gamma^{(M_p),\infty}_{A_p,\rho}\left(\RR^{2d},m'\right)$ (resp. $\Gamma^{\{M_p\},\infty}_{A_p,\rho}\left(\RR^{2d},h'\right)$), for some $m'>0$ (resp. for some $h'>0$). Then for every $a\in U_1$ and $b\in U_2$ we have $b(x,D)a(x,D)=f_{a,b}(x,D)+T_{a,b}$ where $f_{a,b}=\sum_j (1-\chi_j)f_{a,b,j}$ and $\chi_j$ are the cut-off functions defined above which can be chosen uniformly for $a \in U_1, b \in U_2$, and with $f_{a,b,j}$ given by \eqref{ascomp}.
Moreover, the family $T_{a,b}$ of *-regularizing operators is an equicontinuous subset of $\mathcal{L}_b\left(\SSS'^*\left(\RR^d\right),\SSS^*\left(\RR^d\right)\right)$.
\end{proposition}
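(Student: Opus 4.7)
The plan is to revisit the proof of \cite{BojanS}, Theorem 7, and carefully track the dependence of all constants, showing they depend only on the bounded subsets $U_1,U_2$ and not on the individual symbols $a\in U_1$, $b\in U_2$. The decomposition itself, together with the fact that each $T_{a,b}$ is $*$-regularizing, is already provided by the cited theorem; the added content here is uniformity.

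First I would use the boundedness of $U_1$ in $\Gamma^{(M_p),\infty}_{A_p,\rho}(\RR^{2d};m')$ (resp.\ $U_2$ in the corresponding space) to obtain, for each $h>0$, a constant $C_h>0$ independent of $a\in U_1$ with
$$\left|D^\alpha_\xi D^\beta_x a(x,\xi)\right|\leq C_h\, h^{|\alpha|+|\beta|}A_\alpha A_\beta \langle(x,\xi)\rangle^{-\rho(|\alpha|+|\beta|)}e^{M(m'|x|)+M(m'|\xi|)},$$
and analogously for $b\in U_2$. Plugging these into \eqref{ascomp}, applying the Leibniz rule, and using $(M.2)$ for $A_p$ to absorb the factors $A_{\alpha+\gamma}A_{\beta+\gamma}$ in terms of $A_\alpha A_\beta A_j^2$ when $|\gamma|=j$, I obtain the uniform bound
$$\sup_{j}\sup_{\alpha,\beta}\sup_{(x,\xi)\in Q^c_{Bm_j}}\frac{\left|D^\alpha_\xi D^\beta_x f_{a,b,j}(x,\xi)\right|\langle(x,\xi)\rangle^{\rho(|\alpha|+|\beta|+2j)}e^{-M(m|x|)-M(m|\xi|)}}{h^{|\alpha|+|\beta|+2j}A_\alpha A_\beta A_j^2}\leq C,$$
with $B,m,h,C$ depending only on $U_1,U_2$. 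In other words the family $\{(f_{a,b,j})_j : a\in U_1,\, b\in U_2\}$ is a bounded subset of $FS^{*,\infty}_{A_p,\rho}(\RR^{2d})$ in the sense of the refined Proposition stated immediately above.

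Applying that refined Proposition to this family, with $g(x,\xi)=e^{M(m|x|)+M(m|\xi|)}$ (which has ultrapolynomial growth of class $*$), I obtain a single cut-off radius $R$ that works uniformly for all $a\in U_1$, $b\in U_2$. The resulting symbols $f_{a,b}=\sum_j(1-\chi_j)f_{a,b,j}$ then satisfy the defining estimates of $\Gamma^{*,\infty}_{A_p,\rho}(\RR^{2d})$ with constants independent of $a,b$, so $\{f_{a,b}(x,D)\}$ is equicontinuous both on $\SSS^*(\RR^d)$ and on $\SSS^{*\prime}(\RR^d)$.

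The main obstacle is the equicontinuity of $\{T_{a,b}\}$ in $\mathcal{L}_b\bigl(\SSS'^*(\RR^d),\SSS^*(\RR^d)\bigr)$; by the $*$-version of the Schwartz kernel theorem this reduces to showing that the integral kernels $K_{a,b}(x,y)$ of $T_{a,b}$ form a bounded subset of $\SSS^*(\RR^{2d})$. Tracing through \cite{BojanS}, Theorem 7, each $K_{a,b}$ is built from oscillatory integrals whose integrand is controlled by the pieces $\chi_j f_{a,b,j}$ together with the telescoping tail produced by the construction of $f_{a,b}$. The uniform estimates established in the previous steps, combined with the uniform choice of $\chi_j$, allow the integration-by-parts argument of \cite{BojanS} to be carried out with constants depending only on $U_1$ and $U_2$. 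The delicate bookkeeping here is the verification that the series defining the kernel converges in $\SSS^*(\RR^{2d})$ uniformly in $a,b$: this follows from the fact that the cut-offs $\chi_j$ are supported in $\langle(x,\xi)\rangle\lesssim m_j$ while the uniform $FS^{*,\infty}_{A_p,\rho}$-bound on $f_{a,b,j}$ forces $\|\chi_j f_{a,b,j}\|$ to decay like $h^{2j}A_j^2/m_j^{2j\rho}$, which is summable once $h$ is taken sufficiently small (and this choice depends only on $U_1,U_2$). Once this is in place, the seminorm bounds on $K_{a,b}$ in $\SSS^*(\RR^{2d})$ are uniform, which gives the desired equicontinuity and completes the proof.
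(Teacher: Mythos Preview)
Your proposal is correct and matches the paper's own treatment: the paper does not give a proof of this proposition at all, remarking only that ``the proof of these new assertions is completely analogous to the original ones and do not deserve to be repeated here,'' referring to \cite[Theorem~7]{BojanS}. Your plan of revisiting that proof and tracking the dependence of all constants on the bounded sets $U_1,U_2$ is exactly the intended argument, and you have in fact spelled out more of the mechanism (uniform $FS^{*,\infty}_{A_p,\rho}$-bounds on the $f_{a,b,j}$, invocation of the refined Proposition to fix the cut-offs uniformly, and reduction of the equicontinuity of $\{T_{a,b}\}$ to a uniform $\SSS^*(\RR^{2d})$ bound on the kernels via the kernel theorem) than the paper itself does.
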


\indent From the results above we notice that in general the composition of two operators with symbols in $\Gamma^{\ast, \infty}_{A_p, \rho}(\RR^{2d})$ is still an operator of infinite order.
In the sequel we will be interested to the case when the composition is a finite order operator with bounded symbol, hence the related operator is bounded on Sobolev spaces.
With this purpose we give the following definition.

\begin{definition}
Let $V,W\subseteq\Gamma^{*,\infty}_{A_p,\rho}\left(\RR^{2d}\right)$ and let $f(w)$ be a positive continuous function on $\RR^{2d}$ such that $f(w)$ and $1/f(w)$ are of ultrapolynomial growth of class * (see \cite{KomatsuI}). The sets $V$ and $W$ are said to be $(f,*)$-conjugate if for every $h>0$ there exists $C>0$ (resp. there exist $h,C>0$) such that
\beqs
\left|D^{\alpha} a(w)\right|\leq C\frac{h^{|\alpha|}A_{\alpha}}{\langle w\rangle^{\rho|\alpha|}f(w)} \mbox{ and } \left|D^{\alpha} b(w)\right|\leq C\frac{h^{|\alpha|}A_{\alpha}f(w)}{\langle w\rangle^{\rho|\alpha|}} \mbox{ for all } a\in V,\, b\in W.
\eeqs
\end{definition}

Obviously if $V$ and $W$ are $(f,*)$-conjugate then they are bounded subsets of $\Gamma^{(M_p),\infty}_{A_p,\rho}\left(\RR^{2d};m'\right)$ for some $m'>0$ (resp. $\Gamma^{\{M_p\},\infty}_{A_p,\rho}\left(\RR^{2d};h'\right)$, for some $h'>0$).

\begin{proposition}\label{330}
Let $V$ and $W$ be $(f,*)$-conjugate. Then, there exists $C>0$ such that
\beqs
\left\|b(x,D)a(x,D)\right\|_{\mathcal{L}_b(H^s)}\leq C, \mbox{ for all } a\in V,\, b\in W.
\eeqs
\end{proposition}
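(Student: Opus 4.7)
The plan is to apply the composition formula (Proposition 3.5) to write $b(x,D)a(x,D) = f_{a,b}(x,D) + T_{a,b}$, and then to exploit the $(f,*)$-conjugate hypothesis to show that $f_{a,b}$ is in fact a \emph{bounded} symbol (trivial weight $g\equiv 1$), while controlling the remainder $T_{a,b}$ by equicontinuity. Boundedness of the bounded-weight symbol on $H^s$ will come from a Calder\'on-Vaillancourt-type estimate applied uniformly to the resulting family.

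First, I would observe that the hypothesis that $V$ and $W$ are $(f,*)$-conjugate makes the two families uniformly bounded in a common $\Gamma^{M_p,\infty}_{A_p,\rho}(\RR^{2d};h',m')$, so Proposition \ref{composition} applies \emph{uniformly}, producing the same cut-offs $\chi_j$ and a family $\{T_{a,b}\}$ equicontinuous in $\mathcal{L}_b(\SSS'^*,\SSS^*)$. Next, in the expression $f_{a,b,j}(x,\xi)=\sum_{|\alpha|=j}(\alpha!)^{-1}\partial_\xi^\alpha b\cdot D_x^\alpha a$, the factors $f(w)$ and $1/f(w)$ coming from the two conjugate estimates cancel pointwise. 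Using Leibniz for $\partial^\gamma(\partial_\xi^\alpha b\cdot D_x^\alpha a)$, the property $(M.2)$ for $A_p$ to separate $A_{\alpha+\gamma'}\leq c_0H^{|\alpha|+|\gamma'|}A_\alpha A_{\gamma'}$, and the combinatorial bound $\sum_{|\alpha|=j}(\alpha!)^{-1}=d^j/j!$, one obtains uniform estimates
\beqs
\left|D^\gamma f_{a,b,j}(w)\right|\leq C\,\tilde h^{|\gamma|+2j}A_\gamma A_j^2\langle w\rangle^{-\rho(|\gamma|+2j)}, \quad (x,\xi)\in Q^c_{Bm_j},
\eeqs
with constants independent of $a\in V$, $b\in W$. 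That is, $\sum_j f_{a,b,j}$ belongs uniformly to a set $U\subset FS^{*,\infty}_{A_p,\rho}$ of the type considered in the proposition on asymptotic expansion with trivial weight $g\equiv 1$.

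Applying that proposition to $U$, I obtain representatives $f_{a,b}\sim\sum_j f_{a,b,j}$, constructed as $\sum_j(1-\chi_j)f_{a,b,j}$ with uniform $\chi_j$, satisfying the \emph{bounded} estimate
\beqs
\left|D^\alpha f_{a,b}(w)\right|\leq C\,\tilde h^{|\alpha|}A_\alpha\langle w\rangle^{-\rho|\alpha|},
\eeqs
uniformly in $a\in V$, $b\in W$. Since $\rho>0$, such a symbol belongs in particular to H\"ormander's class $S^0_{\rho,0}(\RR^{2d})$, and the Calder\'on-Vaillancourt theorem yields $L^2$-boundedness with norm controlled by finitely many of the constants $C\tilde h^{|\alpha|}A_\alpha$. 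To pass from $L^2$ to $H^s$, I would conjugate by $\langle D\rangle^s$: the commutator $[\langle D\rangle^s, f_{a,b}(x,D)]\langle D\rangle^{-s}$ is of the same bounded type (its symbol lies again in a bounded set of the trivial-weight class), and one deduces a uniform bound $\|f_{a,b}(x,D)\|_{\mathcal{L}_b(H^s)}\leq C$.

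Finally, for the remainder, the equicontinuity of $\{T_{a,b}\}\subset\mathcal{L}_b(\SSS'^*,\SSS^*)$ combined with the continuous embeddings $\SSS^*(\RR^d)\hookrightarrow H^s(\RR^d)\hookrightarrow\SSS'^*(\RR^d)$ gives $\|T_{a,b}\|_{\mathcal{L}_b(H^s)}\leq C'$ uniformly, and summing with the previous estimate concludes the proof. The main obstacle I foresee is the bookkeeping in the second paragraph: verifying that the derivative estimates on $f_{a,b,j}$ hold with the precise dependence on $\alpha,\beta,j$ that the asymptotic-expansion proposition requires, and checking that the cut-offs $\chi_j$ supplied by that construction can indeed be chosen uniformly over $V\times W$ so that the resulting symbol $f_{a,b}$ depends measurably (or continuously) on $(a,b)$ with uniform constants; everything else is a routine combination of the symbolic calculus of \cite{BojanS} with classical $L^2$-continuity.
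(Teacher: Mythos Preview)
Your proposal is correct and follows essentially the same route as the paper: decompose $b(x,D)a(x,D)=f_{a,b}(x,D)+T_{a,b}$ via Proposition~\ref{composition}, use the $(f,*)$-conjugate hypothesis so that the factors $f(w)$ and $1/f(w)$ cancel in the Leibniz expansion of $D^{\gamma}f_{a,b,j}$, obtain a uniform bounded-weight estimate on $f_{a,b}=\sum_j(1-\chi_j)f_{a,b,j}$, and conclude $H^s$-boundedness (the paper cites \cite[Theorems 1.7.14 and 2.1.11]{NR}, which is precisely the Calder\'on--Vaillancourt-plus-commutator argument you sketch). The only cosmetic difference is that you pass through the asymptotic-expansion proposition with $g\equiv 1$ to build the representative, whereas the paper works directly with the explicit formula $f_{a,b}=\sum_j(1-\chi_j)f_{a,b,j}$ already supplied by Proposition~\ref{composition}; your worry about uniform choice of the $\chi_j$ is therefore already taken care of by that proposition.
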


\begin{proof} Let $f_{a,b}$ be the symbol of the operator $b(x,D)a(x,D)$ defined as above. Then $b(x,D)a(x,D)=f_{a,b}(x,D)+T_{a,b}$, where $T_{a,b}$ form an equicontinuous subset of $\mathcal{L}_b\left(\SSS'^*\left(\RR^d\right),\SSS^*\left(\RR^d\right)\right)$. Then $f_{a,b} \sim \sum_j f_{a,b,j}$, where
\beqs
f_{a,b,j}(w)=\sum_{|\nu|=j}\frac{1}{\nu!}\partial^{\nu}_{\xi}b(w)D^{\nu}_x a(w).
\eeqs
Observe that
\beqs
\left|D^{\alpha}_w f_{a,b,j}(w)\right|&\leq& \sum_{\beta\leq\alpha}\sum_{|\nu|=j}\frac{1}{\nu!}\left|D^{\alpha-\beta}_wD^{\nu}_{\xi}b(w)\right| \left|D^{\beta}_wD^{\nu}_x a(w)\right|\\
&\leq& C_1\sum_{\beta\leq\alpha}\sum_{|\nu|=j}\frac{1}{\nu!}\frac{h^{|\alpha|+2|\nu|}A_{|\alpha|+2|\nu|}}{\langle w\rangle^{\rho(|\alpha|+2|\nu|)}}\leq C_2\frac{(2hH)^{|\alpha|+2j}A_{\alpha}A_{2j}}{\langle w\rangle^{\rho(|\alpha|+2j)}}.
\eeqs
Now, since $f_{a,b}=\sum_j (1-\chi_j)f_{a,b,j}$ with $\chi_j$ defined as above, one easily obtains that for every $h>0$ there exists $C>0$, resp. there exist $h,C>0$ such that $\left|D^{\alpha}_w f_{a,b}(w)\right|\leq C h^{|\alpha|}A_{\alpha}\langle w\rangle^{-\rho|\alpha|}$. From this it follows that $f_{a,b}(x,D)$, $a\in V$, $b\in W$, form a bounded subset of $\mathcal{L}_b(H^s)$ (cf. \cite[Theorem 1.7.14]{NR} and \cite[Theorem 2.1.11]{NR} and its proof), the claim follows.
\end{proof}

The next result has been proved in \cite{CPP} for more general hypoelliptic operators. It is immediate to verify that it holds in particular for symbols satisfying the ellipticity conditions in Definition \ref{elliptic}.

\begin{theorem} \label{parametrix}
Let $a\in\Gamma^{*,\infty}_{A_p,\rho}\left(\RR^{2d}\right)$ be $(\tilde{M}_p)$-elliptic (resp. $\{\tilde{M}_p\}$-elliptic). Then there exists a *-regularizing operator $T$ and a symbol $b\in\Gamma^{*,\infty}_{A_p,\rho}\left(\RR^{2d}\right)$ such that $b(x,D)a(x,D)=\mathrm{Id}+T$. Moreover, the symbol $b$ satisfies the following condition:
there exists $B'>0$ such that for every $h>0$ there exist $C>0$ (resp. there exist $h,C>0$) such that
\beq\label{grwww}
\left|D^{\alpha}_{\xi}D^{\beta}_x b(x,\xi)\right|\leq C \frac{h^{|\alpha|+|\beta|}A_{\alpha+\beta}}{|a(x,\xi)|\langle (x,\xi)\rangle^{\rho(|\alpha|+|\beta|)}},\qquad (x,\xi) \in Q_{B'}^c.
\eeq
\end{theorem}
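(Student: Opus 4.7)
The plan is to carry out the classical parametrix construction, adapted to the class $\Gamma^{*,\infty}_{A_p,\rho}(\RR^{2d})$ via the symbolic calculus recalled in this section. I would produce $b$ as (the Borel-type sum of) a formal series $\sum_{j\in\NN} b_j \in FS^{*,\infty}_{A_p,\rho}(\RR^{2d})$ designed so that the composition symbol $b\# a$ is equivalent to the constant symbol $1$, and then read off the statement from Proposition \ref{composition} together with the fact (\cite[Theorem 3]{BojanS}) that operators whose symbols are equivalent to $0$ are $*$-regularizing.

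For the leading term, fix a cut-off $\chi\in\mathcal{C}^{\infty}(\RR^{2d})$ of class $*$ with $\chi=1$ on $Q_{2B}^c$ and $\mathrm{supp}\,\chi\subset Q_B^c$, and set $b_0 := \chi/a$. Condition $i)$ of Definition \ref{elliptic} ensures that $1/a$ is well defined and bounded below by a positive constant on $\mathrm{supp}\,\chi$. Applying the Faà di Bruno formula to the composition $z\mapsto 1/z$ with $z = a(x,\xi)$, estimating the inner derivatives by condition $ii)$ of Definition \ref{elliptic}, and exploiting the log-convexity of $A_p/p!$ (property $(M.4)$ for $A_p$) to collapse the partition sums to a single $A_{\alpha+\beta}$, one obtains on $Q_{2B}^c$ precisely the estimate \eqref{grwww} for $b_0$; contributions from derivatives landing on $\chi$ are compactly supported and will be absorbed into the remainder at the end.

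Motivated by \eqref{ascomp}, I would recursively define, for $j\geq 1$,
$$b_j := -\frac{\chi}{a}\sum_{\substack{l+|\nu|=j\\ l<j}}\frac{1}{\nu!}\,\partial^{\nu}_{\xi} b_l \cdot D^{\nu}_x a,$$
so that the formal composition $\bigl(\sum_j b_j\bigr)\# a$ cancels order by order to $1$. Induction on $j$, using Leibniz, property $(M.2)$ for $A_p$, and the estimate from the previous paragraph, places $\{b_j\}$ in $FS^{*,\infty}_{A_p,\rho}(\RR^{2d})$ in the uniform form required to apply the refined Borel-type Proposition stated after Definition \ref{asexp}, with growth function $g(w)=1/|a(w)|$. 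That Proposition delivers a symbol $b\sim \sum_j b_j$ in $\Gamma^{*,\infty}_{A_p,\rho}(\RR^{2d})$ satisfying \eqref{grwww} for some $B'>0$. Proposition \ref{composition} then gives $b(x,D)a(x,D) = f_{b,a}(x,D) + T_{b,a}$ with $T_{b,a}$ $*$-regularizing; by the choice of the $b_j$ one has $f_{b,a}\sim 1$, so $f_{b,a}(x,D)-\mathrm{Id}$ is also $*$-regularizing, and the claim follows with $T := T_{b,a} + (f_{b,a}(x,D)-\mathrm{Id})$.

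The principal obstacle is bookkeeping: for the Borel-type sum to apply, each $b_j$ must satisfy estimates with the sharp gain $\langle w\rangle^{-2\rho j}$ and sharp factor $A_j^2$ prescribed by the definition of $FS^{*,\infty}_{A_p,\rho}$. Maintaining this through the recursive formula above is delicate and rests on the multiplicative stability supplied by $(M.2)$ for $A_p$; in the Roumieu case one must in addition handle uniformly the auxiliary sequence $(k_p)$ entering the ellipticity of $a$, which is achieved through the inequalities \eqref{nrstv} and Lemma~2.3 of \cite{BojanL} recalled in the preamble to Definition \ref{elliptic}.
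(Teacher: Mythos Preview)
Your sketch is the standard parametrix construction and is correct in outline; it is precisely the argument carried out in \cite{CPP} (to which the paper simply defers), specialised from hypoelliptic to elliptic symbols. The paper gives no self-contained proof here, so there is nothing to compare beyond noting that your recursive definition of the $b_j$, the Fa\`a di Bruno estimate for $b_0=\chi/a$, and the appeal to the refined Borel-type proposition with $g(w)=1/|a(w)|$ (suitably modified to be positive on the compact region) are exactly the ingredients used in the cited reference; the only point worth flagging is that to conclude $f_{a,b}\sim 1$ from $b\sim\sum_j b_j$ you are implicitly invoking the compatibility of the asymptotic expansion with the composition formula in $FS^{*,\infty}_{A_p,\rho}$, which is part of the calculus in \cite{BojanS} but should be named explicitly.
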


As we mentioned in the introduction, given $(r_p)\in\mathfrak{R}$ and multi-index $\gamma$, $R_{\gamma}$ stands for $R_{|\gamma|}=\prod_{j=1}^{|\gamma|}r_j$.

\begin{lemma}\label{340}
Let $a \in \Gamma^{*,\infty}_{A_p, \rho}(\RR^{2d})$ be $(\tilde{M}_p)$-elliptic (resp. $\{\tilde{M}_p\}$-elliptic) and let $b$ be the symbol of the parametrix of $a(x,D)$. Then the sets $\{b\}$ and $\ds\left\{\frac{h^{|\alpha|}}{M_{\alpha}}D^{\alpha}_w a\Big|\, \alpha\in\NN^{2d}\right\}$ are $(|a(w)|,*)$-conjugate for every $h>0$ (resp. for some $h>0$). Hence, for every $h>0$ there exists $C>0$ (resp. there exist $h,C>0)$ such that the estimate $\left\|b(x,D)\circ\left(D^{\alpha}_w a\right) (x,D)\right\|_{\mathcal{L}_b(H^s)}\leq Ch^{|\alpha|}M_{\alpha}$ holds. Moreover, in the $(M_p)$ case, there exist $(r_p)\in\mathfrak{R}$ with $r_1=1$ and $C>0$ such that $\left\|b(x,D)\circ\left(D^{\alpha}_w a\right) (x,D)\right\|_{\mathcal{L}_b(H^s)}\leq CM_{\alpha}/R_{\alpha}$.
\end{lemma}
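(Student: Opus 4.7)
The plan is to establish the $(|a|,*)$-conjugacy of $\{b\}$ with $V_h:=\{(h^{|\alpha|}/M_\alpha)D^\alpha_w a:\alpha\in\NN^{2d}\}$, then invoke Proposition \ref{330} to extract the uniform $H^s$-bound, and finally refine that estimate in the Beurling case by a diagonal selection of $(r_p)\in\mathfrak{R}$. For the set $\{b\}$ the bound required by the conjugacy is supplied by Theorem \ref{parametrix} directly outside $Q_{B'}$; on the compact region where $|a|$ may vanish, I would replace $|a|$ by a positive continuous $\tilde f$ agreeing with $|a|$ outside $Q_B$ and bounded above and below on $Q_B$, so that $\tilde f$ and $1/\tilde f$ have ultrapolynomial growth of class $*$. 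For $V_h$, differentiating an element and invoking ellipticity condition $ii)$ of Definition \ref{elliptic} gives, on $Q_B^c$,
\beqs
\frac{h^{|\alpha|}}{M_\alpha}\left|D^\nu_w D^\alpha_w a(w)\right| &\leq& C\,\frac{h^{|\alpha|} h_1^{|\alpha|+|\nu|} A_{\alpha+\nu}}{M_\alpha}\cdot\frac{|a(w)|}{\langle w\rangle^{\rho(|\alpha|+|\nu|)}},
\eeqs
and then $(M.2)$ for $A_p$ combined with $A_p\subset M_p$ bounds the right-hand side by $C'(hh_1HL)^{|\alpha|}(h_1H)^{|\nu|}A_\nu|a(w)|/\langle w\rangle^{\rho|\nu|}$. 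In the Beurling case $h_1$ is arbitrary, so for any prescribed $h'>0$ one selects $h_1$ with $hh_1HL\leq 1$ and $h_1H\leq h'$; the Roumieu case is analogous, with $h$ chosen sufficiently small relative to the fixed $h_1$ coming from ellipticity. Combined with the routine estimates on the compact region, this yields the claimed conjugacy.

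Applying Proposition \ref{330} to the conjugate pair $\{b\},V_h$ then supplies $C_h>0$ with $\|b(x,D)v(x,D)\|_{\mathcal{L}_b(H^s)}\leq C_h$ uniformly for $v\in V_h$. Specialising to $v=(h^{|\alpha|}/M_\alpha)D^\alpha_w a$ and relabelling $h\mapsto 1/h$ produces $\|b(x,D)(D^\alpha_w a)(x,D)\|_{\mathcal{L}_b(H^s)}\leq Ch^{|\alpha|}M_\alpha$, valid for every $h>0$ in the Beurling case and for some $h,C>0$ in the Roumieu case.

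For the sharpened $(M_p)$-statement, set $D_p:=\sup_{|\alpha|=p}\|b(x,D)(D^\alpha_w a)(x,D)\|_{\mathcal{L}_b(H^s)}/M_p$. The previous step gives $D_p\leq C_h h^p$ for every $h>0$, hence $D_p^{1/p}\to 0$ as $p\to\infty$. A standard diagonal construction, in the spirit of \cite[Lemma 2.3]{BojanL} (for instance, setting $t_p:=\sup_{k\geq p}D_k^{1/k}$ and $r_p\sim 1/\sqrt{t_p}$, then adjusting for strict monotonicity and the normalisation $r_1=1$), produces a sequence $(r_p)\in\mathfrak{R}$ with $r_1=1$ and $r_p\to\infty$ for which $D_p R_p$ remains bounded; this is precisely the claimed estimate. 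I expect the diagonal construction to be the main technical obstacle: $(r_p)$ must be forced to infinity slowly enough that $R_p$ does not overwhelm the superexponential decay of $D_p$, while simultaneously respecting the normalisation $r_1=1$ and the strict monotonicity of $\mathfrak{R}$. By contrast the compact-set modification of $|a|$ and the bookkeeping with the $A_p$- and $M_p$-sequences are routine.
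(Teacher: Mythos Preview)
Your proposal is correct and follows essentially the same route as the paper: the conjugacy computation, the appeal to Proposition~\ref{330}, and the refinement in the Beurling case are all handled the same way. The only notable difference is that for the final step the paper dispatches your ``diagonal construction'' in one line by invoking Lemma~3.4 of \cite{Komatsu3} (which asserts precisely that $\sup_p a_p/(h^p M_p)<\infty$ for all $h>0$ yields $(r_p)\in\mathfrak{R}$ with $a_p\leq CM_p/R_p$), then sets $r_p=\max\{\tilde r_p,1\}$ to enforce $r_1=1$; so what you flag as the main technical obstacle is in fact a standard lemma. Your care about modifying $|a|$ on the compact region $Q_B$ is warranted---the paper glosses over this, writing $(|a(w)|,*)$-conjugate even though $|a|$ need not be bounded away from zero there---but the fix is routine and does not affect the argument.
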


\begin{proof} We have
\beqs
\frac{h^{|\alpha|}}{M_{\alpha}}\left|D^{\alpha+\beta}_w a(w)\right|&\leq& C_1\frac{h^{|\alpha|}h_1^{|\alpha|+|\beta|}A_{\alpha+\beta}|a(w)|}{M_{\alpha}\langle w\rangle^{\rho(|\alpha|+|\beta|)}}\\
&\leq&
C_2\frac{(LHhh_1)^{|\alpha|}(Hh_1)^{|\beta|}A_{\beta}|a(w)|}{\langle
w\rangle^{\rho|\beta|}}. \eeqs The $\{M_p\}$ case trivially
follows from this by choosing $h$ small enough, since $h_1$ is
fixed. In the $(M_p)$ case, for each fixed $h$ we can take $h_1$
arbitrary small. One easily sees that this implies that the sets
under consideration are $(|a(w)|,(M_p))$-conjugate. The inequality
$\left\|b(x,D)\circ\left(D^{\alpha}_w a\right)
(x,D)\right\|_{\mathcal{L}_b(H^s)}\leq Ch^{|\alpha|}M_{\alpha}$
follows by Proposition \ref{330}. It remains to prove the last
part of the lemma. Since, in the $(M_p)$ case, we already proved
that $\ds\sup_{\alpha\in
\NN^{2d}}\frac{\left\|b(x,D)\circ\left(D^{\alpha}_w a\right)
(x,D)\right\|_{\mathcal{L}_b(H^s)}}
{h^{|\alpha|}M_{\alpha}}<\infty$ for every $h>0$, by Lemma 3.4 of \cite{Komatsu3}
we can conclude that there exists $(\tilde{r}_p)\in\mathfrak{R}$
and $C>0$ such that $\left\|b(x,D)\circ\left(D^{\alpha}_w a\right)
(x,D)\right\|_{\mathcal{L}_b(H^s)}\leq
CM_{\alpha}/\prod_{j=1}^{|\alpha|}\tilde{r}_j$. If we take
$r_p=\max\{\tilde{r}_p,1\}$, then $(r_p)\in\mathfrak{R}$, $r_1=1$
and the desired estimate holds for this $(r_p)$, possibly with
a larger constant $C$.
\end{proof}

\begin{lemma}\label{349}
There exists $l\geq 1$ such that the sets $\ds \left\{\frac{h^{|\beta|}}{l^{|\gamma|}\tilde{M}_{\gamma}}D^{\alpha}_x p_{\beta} \xi^{\gamma}\Bigg|\, \alpha,\beta,\gamma\in\NN^d\right\}$ and $\{b\}$ are $\left(e^{\tilde{M}(m|x|)}e^{\tilde{M}(m|\xi|)},(M_p)\right)$-conjugate for any $h>0$, (resp. the sets  $\ds \left\{\frac{h^{|\beta|}}{l^{|\gamma|}K_{\gamma}\tilde{M}_{\gamma}}D^{\alpha}_x p_{\beta} \xi^{\gamma}\Bigg|\, \alpha,\beta,\gamma\in\NN^d\right\}$ and $\{b\}$  are $\left(e^{\tilde{N}_{k_p}(|x|)}e^{\tilde{N}_{k_p}(|\xi|)},\{M_p\}\right)$-conjugate for any $h>0$). In particular for each $\tilde{h}>0$ there exists $C>0$ such that $\left\|(B\circ D^{\alpha}p_{\beta}(x) \partial^{\gamma}) (x,D)\right\|_{\mathcal{L}(H^s)}\leq C\tilde{h}^{|\beta|}l^{|\gamma|}\tilde{M}_{\gamma}$ (resp. $\left\|(B\circ D^{\alpha}p_{\beta}(x) \partial^{\gamma}) (x,D)\right\|_{\mathcal{L}(H^s)}$ $\leq C\tilde{h}^{|\beta|}l^{|\gamma|}K_{\gamma}\tilde{M}_{\gamma}$).
\end{lemma}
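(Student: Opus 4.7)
The approach is to invoke Proposition \ref{330} with $V=\{b\}$ and $W$ the rescaled symbol family, which reduces the lemma to establishing $(f,*)$-conjugacy with $f(w)=e^{\tilde{M}(m|x|)+\tilde{M}(m|\xi|)}$ (resp.\ $f(w)=e^{\tilde{N}_{k_p}(|x|)+\tilde{N}_{k_p}(|\xi|)}$). Indeed, once conjugacy is verified, Proposition \ref{330} yields a uniform bound $\leq C$ on the operator norm of $b(x,D)\circ\bigl(h^{|\beta|}l^{-|\gamma|}\tilde{M}_\gamma^{-1} D^\alpha_x p_\beta\,\xi^\gamma\bigr)(x,D)$; rearranging and substituting $h=1/\tilde{h}$ produces the claimed estimate (absorbing the harmless factor $i^{|\gamma|}$ from $\partial^\gamma=i^{|\gamma|}D^\gamma$ into the constant).

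The parametrix side of the conjugacy is essentially immediate. Theorem \ref{parametrix} already gives, on $Q_{B'}^c$, the bound $|D^\alpha b(w)|\leq C h_0^{|\alpha|}A_\alpha/(|a(w)|\langle w\rangle^{\rho|\alpha|})$ for every $h_0>0$ (resp.\ for some $h_0>0$); the ellipticity lower bound $|a(w)|\geq c\,f(w)$ from Definition \ref{elliptic}(i) then delivers the required estimate on $Q_{\max(B,B')}^c$. On the bounded complement, continuity of $b$ ensures that all derivatives are bounded while $1/f$ is bounded below by a positive constant, so the estimate extends globally at the cost of enlarging the constant.

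For the symbol side I differentiate,
\[
D^{\delta_\xi}_\xi D^{\delta_x}_x\bigl(D^\alpha_x p_\beta(x)\,\xi^\gamma\bigr)=D^{\alpha+\delta_x}_x p_\beta(x)\cdot D^{\delta_\xi}_\xi\xi^\gamma,
\]
apply \eqref{343} (resp.\ \eqref{345}) to the first factor with a parameter $h_2>0$ to be fixed, and estimate the second by $|D^{\delta_\xi}_\xi\xi^\gamma|\leq 2^{|\gamma|}\delta_\xi!\,|\xi|^{|\gamma|-|\delta_\xi|}$. To manufacture the factor $f(w)$ on the right I use the defining inequality $|\xi|^p\leq \tilde{M}_p e^{\tilde{M}(m|\xi|)}/m^p$ of the associated function, which converts the polynomial growth in $\xi$ into an $f$-factor while producing $\tilde{M}_{|\gamma-\delta_\xi|}/m^{|\gamma-\delta_\xi|}$; this, combined with $2^{|\gamma|}$, is absorbed into the normalization $1/(l^{|\gamma|}\tilde{M}_\gamma)$ upon choosing $l$ large enough relative to $m$ and the $(M.2)$-constant $H$ of $\tilde{M}_p$. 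Crucially, this choice of $l$ depends only on structural constants and not on the free parameter $h$ of the lemma, as required.

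The remaining bookkeeping is the main technical obstacle. I must absorb the $\alpha$-dependence and produce the required $\langle w\rangle^{-\rho|\delta|}$ decay. The inclusion $A_p\subset \tilde{M}_p$ gives $A_{\delta_x+\alpha}/\tilde{M}_{\delta_x+\alpha}\leq c'L^{|\delta_x+\alpha|}$, so choosing $h_2L$ small (depending on the target $h_3$ of the conjugacy definition in the Beurling case; taking $h_2$ commensurate with the fixed available $h$ from \eqref{345} in the Roumieu case) kills the factor $(h_2L)^{|\alpha|}$, and the combinatorial $\delta_\xi!$ is absorbed via $p!\leq A_p$, which follows from $(M.4)$ and $A_0=A_1=1$. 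The weight $\langle w\rangle^{\rho|\delta|}$ is inserted by splitting into $x$- and $\xi$-contributions, estimating $\langle w\rangle^p\leq 3^p(1+|x|^p+|\xi|^p)$, applying the associated function inequality in both variables, and collapsing the resulting squared exponential via the standard consequence $e^{2\tilde{M}(\lambda)}\leq c_0 e^{\tilde{M}(H\lambda)}$ of $(M.2)$, which may require a further shrinking of $h_2$. The Roumieu case runs in parallel, with $\tilde{N}_{k_p}$ in place of $\tilde{M}$, the normalizing factor $K_\gamma$ inserted, and the sequence $(k_p)$ from ellipticity matched with the one in \eqref{345} by taking the pointwise maximum (which still lies in $\mathfrak{R}$ and satisfies $(M.2)$).
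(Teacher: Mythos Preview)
Your overall strategy matches the paper's: reduce to Proposition~\ref{330} by verifying $(f,*)$-conjugacy, treat the parametrix side via Theorem~\ref{parametrix} combined with the elliptic lower bound, and estimate the symbol side by differentiating and invoking \eqref{343}/\eqref{345}. The parametrix side and the principle behind the choice of $l$ are fine.

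There is, however, a genuine gap in how you produce the decay $\langle w\rangle^{-\rho|\delta|}$ on the symbol side. You first discard the denominator $\tilde{M}_{\alpha+\delta_x}$ coming from \eqref{343} wholesale via $A_{\alpha+\delta_x}/\tilde{M}_{\alpha+\delta_x}\leq c'L^{|\alpha+\delta_x|}$, and only afterwards try to absorb the compensating weight $\langle w\rangle^{\rho|\delta|}$ through the associated-function inequality. That inequality yields a factor $\tilde{M}_{|\delta|}$ in the numerator, whereas the conjugacy bound allows only $h_3^{|\delta|}A_\delta$; since in general $\tilde{M}_p/A_p$ is unbounded (e.g.\ $A_p=p!^{v}$, $\tilde{M}_p=p!^{l+l'}$ with $l+l'>v$ in the paper's Example~1), the $x$-derivative part of the estimate does not close. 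The paper avoids this by \emph{not} collapsing $A_{\alpha+\nu}/\tilde{M}_{\alpha+\nu}$ immediately: it regroups the denominators $\tilde{M}_\gamma\,\tilde{M}_{\alpha+\nu}$ into $\tilde{M}_{\gamma+\nu}\,\tilde{M}_\alpha$ using $(M.1)$ and $(M.2)$, spends $\tilde{M}_\alpha$ to kill $A_\alpha$ via $A_p\subset\tilde{M}_p$, and reserves $\tilde{M}_{\gamma+\nu}$ to swallow the entire polynomial factor $\langle(x,\xi)\rangle^{|\gamma|+|\nu|}$ (which already includes the inserted $\langle w\rangle^{|\mu|+|\nu|}$) into $e^{\tilde{M}(\lambda\langle(x,\xi)\rangle)}$; Proposition~3.6 of \cite{Komatsu1} then splits this into $e^{\tilde{M}(m|x|)}e^{\tilde{M}(m|\xi|)}$. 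Your argument can be repaired along these lines, but as written the needed $1/\tilde{M}_{\delta_x}$ factor has been thrown away before it is used. (A minor side remark: in the Roumieu case the parameter in \eqref{345} is also universally quantified, and the sequence $(k_p)$ there is the same one appearing in Definition~\ref{elliptic}, so no pointwise-maximum matching is required.)
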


\begin{proof} We consider first the $(M_p)$ case. Pick $l\geq 1$ such that $H^2/l\leq m/12$. Let $h,h'>0$ be arbitrary but fixed. Pick $0<h_1<1$ such that $H\sqrt{h_1}\leq h'$, $L\tilde{L}Hh_1\leq 1$, $hh_1\leq 1$ and $H^2\sqrt{h_1}\leq m/6$. Then\\
\\
$\ds\frac{h^{|\beta|}}{l^{|\gamma|}\tilde{M}_{\gamma}}\left|D^{\mu}_{\xi}D^{\nu}_x\left(D^{\alpha}_x p_{\beta}(x) \xi^{\gamma}\right)\right|$
\beqs
&\leq& \frac{h^{|\beta|}\gamma!}{l^{|\gamma|}\tilde{M}_{\gamma}(\gamma-\mu)!}|\xi|^{|\gamma|-|\mu|}\left|D^{\alpha+\nu}_x p_{\beta}(x)\right|\\
&\leq& C_1 \frac{2^{|\gamma|}h^{|\beta|}h_1^{|\alpha|+|\beta|+|\nu|}A_{\alpha+\nu}\mu!\langle(x,\xi)\rangle^{|\mu|+|\nu|}} {l^{|\gamma|}\tilde{M}_{\gamma}\tilde{M}_{\alpha+\nu}\langle(x,\xi)\rangle^{\rho(|\mu|+|\nu|)}} |\xi|^{|\gamma|-|\mu|}e^{\tilde{M}(h_1|x|)}\\
&\leq& C_2 \frac{(2H/l)^{|\gamma|}(H^2h_1)^{|\nu|}(Hh_1)^{|\alpha|} (hh_1)^{|\beta|}A_{\alpha}A_{\nu}\mu!\langle(x,\xi)\rangle^{|\gamma|+|\nu|}} {\tilde{M}_{\gamma+\nu}\tilde{M}_{\alpha}\langle(x,\xi)\rangle^{\rho(|\mu|+|\nu|)}} e^{\tilde{M}(h_1|x|)}\\
&\leq& C_3 \frac{(H\sqrt{h_1})^{|\nu|}(L\tilde{L}Hh_1)^{|\alpha|}(hh_1)^{|\beta|}h'^{|\mu|}A_{\nu+\mu}} {\langle(x,\xi)\rangle^{\rho(|\mu|+|\nu|)}}e^{\tilde{M}(h_1|x|)}e^{\tilde{M}((H\sqrt{h_1}+2H/l)\langle(x,\xi)\rangle)}.
\eeqs
Since $\tilde{M}_p$ satisfies $(M.2)$, by Proposition 3.6 of \cite{Komatsu1}, we have
\beqs
e^{\tilde{M}(h_1|x|)}e^{\tilde{M}((H\sqrt{h_1}+2H/l)\langle(x,\xi)\rangle)}&\leq& C_4e^{\tilde{M}(h_1|x|)}e^{\tilde{M}(3(H\sqrt{h_1}+2H/l)|x|)}e^{\tilde{M}(3(H\sqrt{h_1}+2H/l)|\xi|)}\\
&\leq&
C_5e^{\tilde{M}(3(H\sqrt{h_1}+2H/l)|\xi|)}e^{\tilde{M}(3(H^2\sqrt{h_1}+2H^2/l)|x|)}.
\eeqs If we use this in the above estimate, by the way we defined
$h_1$, we have \beqs
\frac{h^{|\beta|}}{l^{|\gamma|}\tilde{M}_{\gamma}}\left|D^{\mu}_{\xi}D^{\nu}_x\left(D^{\alpha}_x
p_{\beta}(x) \xi^{\gamma}\right)\right|\leq
C\frac{h'^{|\mu|+|\nu|}A_{\mu+\nu}}{\langle(x,\xi)\rangle^{\rho(|\mu|+|\nu|)}}
e^{\tilde{M}(m|\xi|)}e^{\tilde{M}(m|x|)}, \eeqs which proofs the
$(M_p)$ case. In the $\{M_p\}$ case one can use the same technique
as above (observe that the sequence $K_p\tilde{M}_p$ satisfies
$(M.2)$). The last part follows by Proposition \ref{330}.
\end{proof}

\begin{lemma}\label{350}
Let $h>0$ and for each $\beta\in\NN^d$, let $p_{\beta}(x)$ be a smooth function satisfying \eqref{343} in the $(M_p)$ case (resp. satisfying \eqref{345} in the $\{M_p\}$ case) and let $j_{\beta} \in\{1,...,d\}$. Then the following properties hold:\\
\indent $a)$ The sets $\{b\}$ and $\ds\left\{h^{|\beta|}x_{j_{\beta}} p_{\beta}(x)\big|\, \beta\in\NN^d\right\}$ are $\left(e^{\tilde{M}(m|x|)}e^{\tilde{M}(m|\xi|)},(M_p)\right)$-conju\-gate (resp. $\left(e^{\tilde{N}_{k_p}(|x|)}e^{\tilde{N}_{k_p}(|\xi|)},\{M_p\}\right)$-conjugate). In particular, for every $\tilde{h}>0$ there exists $C>0$, such that $\left\|(B\circ x_{j_{\beta}} p_{\beta}(x)) (x,D)\right\|_{\mathcal{L}_b(H^s)}\leq C\tilde{h}^{|\beta|}$. Moreover, there exist $(r_p)\in\mathfrak{R}$ with $r_1=1$ and $C>0$ such that $\left\|(B\circ x_{j_{\beta}} p_{\beta}(x)) (x,D)\right\|_{\mathcal{L}_b(H^s)}$ $\leq C/R_{\beta}$.\\
\indent $b)$ The sets $\{b\}$ and $\ds\left\{h^{|\beta|}\xi_j p_{\beta}(x)\big|\, \beta\in\NN^d\right\}$ are $\left(e^{\tilde{M}(m|x|)}e^{\tilde{M}(m|\xi|)},(M_p)\right)$-conju\-gate (resp. $\left(e^{\tilde{N}_{k_p}(|x|)}e^{\tilde{N}_{k_p}(|\xi|)},\{M_p\}\right)$-conjugate). In particular, for every $\tilde{h}>0$ there exists $C>0$ such that $\left\|(B\circ p_{\beta}(x) \partial_j) (x,D)\right\|_{\mathcal{L}_b(H^s)}\leq C\tilde{h}^{|\beta|}$. Moreover, there exist $(r_p)\in\mathfrak{R}$ with $r_1=1$ and $C>0$ such that $\left\|(B\circ p_{\beta}(x)\partial_j) (x,D)\right\|_{\mathcal{L}_b(H^s)}$ $\leq C/R_{\beta}.$
\end{lemma}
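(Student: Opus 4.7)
The plan is to mimic the proof of Lemma \ref{349} (plus its coda built on Komatsu's lemma as in Lemma \ref{340}): I would first establish the two families as $(f,*)$-conjugate with $f(x,\xi) = e^{\tilde{M}(m|x|)} e^{\tilde{M}(m|\xi|)}$ in the Beurling case (and the corresponding $\tilde{N}_{k_p}$-weight in the Roumieu case), then apply Proposition \ref{330} to pass from conjugacy to an $H^s$-operator norm bound, and finally invoke Lemma 3.4 of \cite{Komatsu3} to upgrade the resulting ``for every $\tilde{h}>0$'' bound to an $R_\beta$-decay. For the parametrix side of the conjugacy, the required estimate on $b$ is immediate from Theorem \ref{parametrix} combined with the ellipticity lower bound on $|a|$; on the bounded region $\{(x,\xi)\colon \langle x\rangle,\langle\xi\rangle \leq B'\}$ the derivatives of $b$ are bounded while $1/f$ is controlled, so that side is painless.

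The substantive derivative computation, for part $(a)$, concerns $h^{|\beta|} x_{j_\beta} p_\beta(x)$. Writing $D^\alpha = D^\mu_\xi D^\nu_x$, this derivative vanishes for $|\mu|\geq 1$, and for $\mu=0$ the Leibniz rule produces at most two terms, namely $x_{j_\beta} D^\nu_x p_\beta(x)$ and, if $\nu_{j_\beta}\geq 1$, $\nu_{j_\beta}D^{\nu-e_{j_\beta}}_x p_\beta(x)$. Plugging in \eqref{343} (resp.\ \eqref{345}), using the elementary fact that $|x_{j_\beta}|\, e^{\tilde{M}(h_1|x|)} \leq C\, e^{\tilde{M}(h_1'|x|)}$ for any $h_1'>h_1$ to absorb the linear factor into the exponential, and then choosing $h_1$ small enough so that the effective exponential rate is bounded by $m$, produces the required bound $\leq C h'^{|\alpha|}A_\alpha\, f(x,\xi)/\langle(x,\xi)\rangle^{\rho|\alpha|}$ for any prescribed $h'>0$. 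This is the same sequence of choices that appears in the proof of Lemma \ref{349}, only simpler since there is no $\xi^\gamma$ factor and no extra $D^\alpha_x$ derivative on $p_\beta$; the Roumieu case is handled similarly using that $K_p\tilde{M}_p$ satisfies $(M.2)$.

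With conjugacy established, Proposition \ref{330} gives a uniform bound $h^{|\beta|} \|(b\circ x_{j_\beta}p_\beta(x))(x,D)\|_{\mathcal{L}_b(H^s)} \leq C$ for each fixed $h>0$, which upon renaming $\tilde{h}=1/h$ reads as the stated ``for every $\tilde{h}>0$'' estimate; Lemma 3.4 of \cite{Komatsu3}, applied exactly as in the last lines of the proof of Lemma \ref{340}, then upgrades this to the $R_\beta$-decay (one normalises $r_1=1$ by replacing $\tilde{r}_p$ with $\max(\tilde{r}_p,1)$). Part $(b)$ is formally identical with $x_{j_\beta}$ replaced by $\xi_j$: the $\xi$-derivatives of $\xi_j p_\beta(x)$ truncate at order one, and the linear factor $\xi_j$ is absorbed into $e^{\tilde{M}(m|\xi|)}$ (resp.\ $e^{\tilde{N}_{k_p}(|\xi|)}$) in exactly the same way. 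I do not expect any real obstacle: the derivative estimates are milder than those of Lemma \ref{349}, and the only delicate bookkeeping is the interplay between $h_1$, $h'$, $m$, and the $(M.2)$-constants for $\tilde{M}_p$, which is routine.
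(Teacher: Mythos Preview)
Your proposal is correct and follows essentially the same route as the paper: Leibniz-split $D^\nu_x(x_{j_\beta}p_\beta)$ into two terms, plug in \eqref{343} (resp.\ \eqref{345}), absorb the stray linear factor and the $1/\tilde{M}_\nu$ into the exponential weight via the associated function and $(M.2)$, then apply Proposition~\ref{330} and finish with Komatsu's Lemma~3.4 as in Lemma~\ref{340}. The paper carries out the absorption via the identity $|x|\langle(x,\xi)\rangle^{|\nu|}/\tilde{M}_{|\nu|+1}\leq h_2^{-|\nu|-1}e^{\tilde{M}(h_2\langle(x,\xi)\rangle)}$ rather than your direct $|x|e^{\tilde{M}(h_1|x|)}\leq C e^{\tilde{M}(h_1'|x|)}$, but this is the same mechanism (and you still need the former to pass from $1/\tilde{M}_\nu$ to $\langle(x,\xi)\rangle^{-\rho|\nu|}$, which you implicitly invoke by pointing back to Lemma~\ref{349}).
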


\begin{proof} We prove $a)$, the proof of $b)$ being completely analogous. In the $(M_p)$ case, let $h,h'>0$ be arbitrary but fixed. We have
\beqs
h^{|\beta|}\left|D^{\alpha}\left(x^{e_{j_{\beta}}}p_{\beta}(x)\right)\right|&\leq& h^{|\beta|}\left|x_{j_{\beta}}\right|\left|D^{\alpha}p_{\beta}(x)\right|+ |\alpha|h^{|\beta|}\left|D^{\alpha-e_{j_{\beta}}}p_{\beta}(x)\right|\\
&=&S_1(x)+S_2(x).
\eeqs
Take $h_2<1$ such that $3h_2H\leq m/2$ and take $h_1<1$ such that $2Hh_1/h_2\leq h'$, $Hh_1\leq m/2$ and $h_1\leq 1/h$. To estimate $S_1(x)$ we have
\beqs
S_1(x)&\leq& C_1h^{|\beta|}|x|\frac{(Hh_1)^{|\alpha|}h_1^{|\beta|}A_{\alpha}e^{\tilde{M}(h_1|x|)}\langle (x,\xi)\rangle^{|\alpha|}}{\tilde{M}_{|\alpha|+1}\langle (x,\xi)\rangle^{\rho|\alpha|}}\\
&\leq& C_1\frac{(Hh_1)^{|\alpha|}A_{\alpha}e^{\tilde{M}(h_1|x|)}e^{\tilde{M}(h_2\langle(x,\xi)\rangle)}} {h_2^{|\alpha|+1} \langle (x,\xi)\rangle^{\rho|\alpha|}}\\
&\leq& C_2\frac{h'^{|\alpha|}A_{\alpha}e^{\tilde{M}((h_1+3h_2)H|x|)}e^{\tilde{M}(3h_2|\xi|)}}{\langle (x,\xi)\rangle^{\rho|\alpha|}}\leq C_2\frac{h'^{|\alpha|}A_{\alpha}e^{\tilde{M}(m|\xi|)}e^{\tilde{M}(m|x|)}}{\langle (x,\xi)\rangle^{\rho|\alpha|}}.
\eeqs
Similar estimates can be obtained for $S_2(x)$ in the same way and the $\{M_p\}$ case can be treated similarly. The estimate $\left\|(B\circ x_j p_{\beta}(x)) (x,D)\right\|_{\mathcal{L}_b(H^s)}\leq C\tilde{h}^{|\beta|}$ follows from Proposition \ref{330}. The last part can be proved similarly as in the proof of Lemma \ref{340}, by using Lemma 3.4 of \cite{Komatsu3}.
\end{proof}

\section{The proof of Theorem \ref{mainthm} }\label{sec2}

The proof of Theorem \ref{mainthm} needs some preparation. First of all it is useful to characterize the space $\SSS^*(\RR^d)$ in terms of suitable scales of Sobolev norms.

Namely, let $$
\|\varphi\|_{s,h}=\sum_{\alpha\in\NN^d}\frac{h^{|\alpha|}}{M_{\alpha}}\left\|x^{\alpha}\varphi(x)\right\|_{H^s} \quad \textrm{and} \quad
\|\varphi\|_{\{s,h\}}=\sum_{\alpha\in\NN^d}\frac{h^{|\alpha|}}{M_{\alpha}}\left\|D^{\alpha}\varphi(x)\right\|_{H^s}.$$
Moreover, for $h>0$ and $(r_p)\in\mathfrak{R}$, set
\beqs
H^{s,h}_N[\varphi]=\sum_{|\alpha|\leq N}\frac{h^{|\alpha|}}{M_{\alpha}}\left\|x^{\alpha}\varphi(x)\right\|_{H^s}&,&\, H^{s,h,(r_p)}_{N}[\varphi]=\sum_{|\alpha|\leq N}\frac{h^{|\alpha|}R_{\alpha}}{M_{\alpha}} \left\|x^{\alpha}\varphi(x)\right\|_{H^s},\\
E^{s,h}_N[\varphi]=\sum_{|\alpha|\leq N}\frac{h^{|\alpha|}}{M_{\alpha}}\left\|D^{\alpha}\varphi(x)\right\|_{H^s}&,&\,
E^{s,h,(r_p)}_{N}[\varphi]=\sum_{|\alpha|\leq N}\frac{h^{|\alpha|}R_{\alpha}}{M_{\alpha}}\left\|D^{\alpha}\varphi(x)\right\|_{H^s}.
\eeqs

Let 
$\varphi\in\SSS\left(\RR^d\right)$. We recall the  well known result (see for example \cite{CKK} or \cite{NR}).
\begin{lemma}\label{separation}
The following conditions are equivalent:
\begin{itemize}
\item[$i)$] $\varphi\in\SSS^*\left(\RR^d\right)$;
\item[$ii)$] there exists $s>d/2$ such that for every $h>0$ (resp. there exists $h>0$) such that $\|\varphi\|_{s,h}<\infty$ and $\|\varphi\|_{\{s,h\}}<\infty$.
\end{itemize}
\end{lemma}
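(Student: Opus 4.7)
The result is a ``separation characterization'' of the Gelfand--Shilov type space $\SSS^*$: the pointwise $L^\infty$ estimates that define $\SSS^*$ are forced by the separately imposed $H^s$ control of $x^\alpha\varphi$ and of $D^\alpha\varphi$. I will describe the two implications; the Beurling $(M_p)$ case is written out, the Roumieu $\{M_p\}$ case being symmetric with the quantifiers on $h,m$ swapped.

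For the direction $i)\Rightarrow ii)$, I will fix an integer $s>d/2$ and use the equivalent norm $\|f\|_{H^s}^2\sim\sum_{|\gamma|\leq s}\|D^\gamma f\|_{L^2}^2$. Starting from the defining estimate $|D^\mu\varphi(x)|\leq CM_\mu m^{-|\mu|}e^{-M(m|x|)}$ and expanding $D^\gamma(x^\alpha\varphi)$ by the Leibniz rule, each term $x^{\alpha-\delta}D^{\gamma-\delta}\varphi$ can be bounded in $L^2$ using the elementary inequality $|x|^p e^{-M(m|x|)}\leq M_p m^{-p}$ (immediate from the definition of the associated function) together with the faster-than-polynomial decay of $e^{-M(m|x|)}$. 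Collecting the pieces via $(M.2)$ yields $\|x^\alpha\varphi\|_{H^s}\leq C(H/m)^{|\alpha|}M_\alpha$, so the series $\|\varphi\|_{s,h}$ converges as soon as $2hH/m<1$, which is arranged by taking $m$ large (Beurling) or $h$ small (Roumieu). The estimate for $\|\varphi\|_{\{s,h\}}$ is completely symmetric, the roles of $x^\alpha$ and $D^\alpha$ being exchanged.

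For $ii)\Rightarrow i)$, the Sobolev embedding $H^s\hookrightarrow L^\infty$ (valid for $s>d/2$) together with the finiteness of the two series yields the separated bounds $\|x^\alpha\varphi\|_\infty,\,\|D^\alpha\varphi\|_\infty\leq CM_\alpha h^{-|\alpha|}$ for every $\alpha\in\NN^d$. The first already implies the pointwise decay $|\varphi(x)|\leq C'e^{-M(h'|x|)}$ through the identity $e^{-M(\rho)}=\inf_p M_p\rho^{-p}$. To obtain the full $\SSS^*$-estimate $|D^\alpha\varphi(x)|\leq CM_\alpha(h')^{-|\alpha|}e^{-M(h'|x|)}$ I need the joint bound $\|x^\beta D^\alpha\varphi\|_\infty\leq CM_\alpha M_\beta(h')^{-|\alpha|-|\beta|}$. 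For this I will go through Parseval,
\[
\|x^\beta D^\alpha\varphi\|_{H^s}^2=\int_{\RR^d}\langle\xi\rangle^{2s}\big|D^\beta_\xi\big(\xi^\alpha\hat\varphi(\xi)\big)\big|^2\,d\xi,
\]
expand the Fourier-side derivative by Leibniz, and control the resulting cross-terms $\langle\xi\rangle^s\xi^{\alpha-\delta}D^{\beta-\delta}\hat\varphi$ by the higher-order Sobolev norms $\|x^{\beta-\delta}\varphi\|_{H^{s+|\alpha-\delta|}}$. These are dominated, via a short bootstrap on $|\alpha|$, in terms of the two input hypotheses, using the fact that $\|\varphi\|_{\{s,h\}}<\infty$ encodes $\|\varphi\|_{H^{s+k}}\leq Ch^{-k}M_k$ for all $k$. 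A combinatorial reshuffling via $(M.2)$ then closes the estimate, and a final application of $H^s\hookrightarrow L^\infty$ converts the $H^s$-bound into the desired $L^\infty$-bound.

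The hard part will be exactly this last step: each of the two input hypotheses lacks information about the complementary variable (the first controls decay but not smoothness, the second vice versa), so combining them into a joint $M_\alpha M_\beta$-type bound on $\|x^\beta D^\alpha\varphi\|_\infty$ is the combinatorial heart of the argument, and it depends crucially on the interplay between the Fourier transform and the sequence conditions $(M.1),(M.2)$ to keep all constants of the correct form.
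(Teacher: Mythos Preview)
The paper does not prove this lemma at all: it is introduced as a ``well known result'' with references to \cite{CKK} and \cite{NR}, and no argument is supplied. So there is no proof in the paper to compare against; your sketch is an independent attempt.

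Your outline for $i)\Rightarrow ii)$ is standard and correct. For $ii)\Rightarrow i)$, however, the step you yourself flag as the ``combinatorial heart'' is not actually closed by what you write. You reduce $\|x^\beta D^\alpha\varphi\|_{H^s}$ via Fourier and Leibniz to terms controlled by $\|x^{\beta-\delta}\varphi\|_{H^{s+|\alpha-\delta|}}$, and then propose to bound these using that $\|\varphi\|_{\{s,h\}}<\infty$ encodes $\|\varphi\|_{H^{s+k}}\leq Ch^{-k}M_k$. But that only controls $\|\varphi\|_{H^{s+k}}$, not $\|x^\gamma\varphi\|_{H^{s+k}}$; expanding the latter by Leibniz again produces precisely the mixed quantities $\|x^{\gamma'}D^{\mu'}\varphi\|_{H^s}$ you are trying to estimate, so the bootstrap as written is circular. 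One can make an induction-on-order argument work here, but it requires a more careful organization than you indicate.

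The route taken in the cited literature (notably \cite{CKK}) sidesteps this entirely: from the two hypotheses one extracts, via Sobolev embedding and the identity $e^{-M(\rho)}=\inf_p M_p\rho^{-p}$, the pointwise bounds $|\varphi(x)|\leq Ce^{-M(h|x|)}$ and $|\hat\varphi(\xi)|\leq Ce^{-M(h|\xi|)}$ (the second coming from $\|D^\alpha\varphi\|_{H^s}\sim\|\xi^\alpha\hat\varphi\|_{L^2_s}$), and then proves directly that simultaneous sub-exponential decay of $\varphi$ and $\hat\varphi$ characterizes $\SSS^*$. This avoids any mixed $x^\beta D^\alpha$ bootstrap.
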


By Lemma \ref{separation} we can prove that a function $u \in \SSS^*(\RR^d) $ by proving the decay and the regularity properties separately. This allows to simplify considerably the proofs, see also \cite{CGR3, CGR4}. \par
Next we state a preliminary technical result which will be used in the subsequent proofs.

\begin{lemma}\label{115}
Let $M_p$ be a sequence which satisfies $(M.3)'$, $(M.4)$ and $M_0=M_1=1$. Let $(k'_p),(k''_p)\in\mathfrak{R}$, $k'_1=k''_1=1$. There exists $(r'_p)\in\mathfrak{R}$ such that $r'_1=1$, $(r'_p)\leq (k'_p)$, $(r'_p)\leq (k''_p)$ and the sequence $N_p=M_p/\prod_{j=1}^p r'_p$, for $p\in\ZZ_+$ and $N_0=1$ satisfies  $(M.3)'$ and $(M.4)$.
\end{lemma}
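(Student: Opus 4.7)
My plan is to construct $r'_p$ recursively so that it is simultaneously dominated by $k'_p$, $k''_p$, and a Komatsu-type slow-growth sequence (controlling $(M.3)'$ for $N_p$), while its successive ratios stay within the threshold forced by $(M.4)$ for $N_p$.

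First I would extract a slow-growth dominator: since $\sum_{p\geq 1} M_{p-1}/M_p<\infty$ by $(M.3)'$, a standard argument (cf.\ Lemma 3.4 of \cite{Komatsu3}) yields $(s_p)\in\mathfrak{R}$ with $s_1=1$ and $\sum_{p} M_{p-1}s_p/M_p<\infty$. Setting $t_p:=\min\{s_p,k'_p,k''_p\}$ one still has $(t_p)\in\mathfrak{R}$, with $t_1=1$ and $t_p\leq k'_p$, $t_p\leq k''_p$.

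Next I would translate the target conditions on $N_p=M_p/R'_p$ (with $R'_p=\prod_{j\leq p}r'_j$) into pointwise conditions on $r'_p$. A direct computation shows that $(M.4)$ for $N_p$ is equivalent to the ratio bound
\beqs
r'_{p+1}/r'_p\leq \alpha_p, \quad \text{where } \alpha_p := \frac{p}{p+1}\cdot\frac{M_{p-1}M_{p+1}}{M_p^2},\ p\geq 1,
\eeqs
while $(M.3)'$ for $N_p$ amounts to $\sum M_{p-1}r'_p/M_p<\infty$. The log-convexity of $M_p/p!$ (that is, $(M.4)$ for $M_p$) yields $\alpha_p\geq 1$, and telescoping produces $\prod_{j=1}^{p-1}\alpha_j = M_p/(p\,M_{p-1})$; this quantity diverges, because $(M.3)'$ combined with $(M.4)$ forces $M_p/(p M_{p-1})\to\infty$ (otherwise $\sum M_{p-1}/M_p$ would majorize a divergent harmonic tail).

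I would then set $r'_1:=1$ and recursively $r'_{p+1}:=\min\{\alpha_p r'_p,\,t_{p+1}\}$. An induction gives $r'_p\leq t_p$, so the bounds $r'_p\leq k'_p, k''_p$ are automatic and $\sum M_{p-1}r'_p/M_p\leq\sum M_{p-1}s_p/M_p<\infty$ yields $(M.3)'$ for $N_p$. Since $\alpha_p\geq 1$ and $r'_p\leq t_p\leq t_{p+1}$, both entries of the min are $\geq r'_p$, so $r'_p$ is nondecreasing, and the defining inequality $r'_{p+1}/r'_p\leq\alpha_p$ gives $(M.4)$ for $N_p$. The remaining and most delicate point, which I expect to be the main obstacle, is proving $r'_p\to\infty$: the $\min$ truncation could in principle cap $r'_p$ at a bounded value. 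I would split into two cases. If $r'_p=t_p$ for infinitely many $p$, divergence follows from $t_p\to\infty$. Otherwise there exists $P$ with $r'_p<t_p$ for all $p\geq P$, and then $r'_{p+1}<t_{p+1}$ forces the min in the recursion to be attained by $\alpha_p r'_p$ (else $r'_{p+1}=t_{p+1}$, contradicting $r'_{p+1}<t_{p+1}$), so $r'_p = r'_P\prod_{j=P}^{p-1}\alpha_j\to\infty$ by the preceding step. The divergence of $\prod\alpha_j$, itself a nontrivial joint consequence of $(M.3)'$ and $(M.4)$ for $M_p$, is precisely what closes the construction.
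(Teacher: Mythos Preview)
Your proof is correct and follows essentially the same route as the paper's. The recursive definition $r'_{p+1}=\min\{\alpha_p r'_p,\,t_{p+1}\}$ is exactly the paper's construction (written there as $r'_{p+1}=\min\{r_{p+1},\,\frac{pm_{p+1}}{(p+1)m_p}r'_p\}$, with $m_p=M_p/M_{p-1}$), and the verification of $(M.3)'$, $(M.4)$ and monotonicity is identical. The only cosmetic differences are: (i) for the slow-growth dominator $(s_p)$ the paper gives the explicit tail--square-root construction $s_p=\tilde c\big(\sum_{j\geq p}1/m_j\big)^{-1/2}$ rather than invoking an external lemma; (ii) for $r'_p\to\infty$ the paper argues by contradiction (if $r'_p\leq C$ then eventually $t_p>C+1$, forcing the recursion onto the $\alpha_p r'_p$ branch), which is precisely your Case~2 rephrased.
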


\begin{proof} Let $a_p> 0$, $p\in\ZZ_+$, are such that $\ds \sum_{p=1}^{\infty}a_p$ is convergent. Then one easily verifies that $\ds \sum_{p=1}^{\infty}\frac{a_p}{s_p}$ is also convergent, where $\ds s_p=\sqrt{\sum_{j=p}^{\infty}a_j}$, $p\in\ZZ_+$ (one easily obtains that the partial sums of the series $\sum a_p/s_p$ are a Cauchy sequence). Put $\tilde{c}=\sqrt{\sum_{j=1}^{\infty} 1/m_j}$ and define $\ds \tilde{r}'_p=\tilde{c}\left(\sum_{j=p}^{\infty} \frac{1}{m_j}\right)^{-1/2}$, $p\in\ZZ_+$. Then we obtain that $\tilde{r}'_1=1$, $(\tilde{r}'_p)\in\mathfrak{R}$ and $\sum \tilde{r}'_p/m_p$ converges. Let $r_p=\min\{k'_p,k''_p,\tilde{r}'_p\}$, for $p\in\ZZ_+$. Then, obviously, $r_1=1$, $(r_p)\in\mathfrak{R}$, $(r_p)\leq (k'_p)$, $(r_p)\leq (k''_p)$ and $(r_p)\leq (\tilde{r}'_p)$. Also $\sum r_p/m_p$ converges. Define the sequence $(r'_p)$ by $r'_1=1$ and inductively
\beqs
r'_{p+1}=\min\left\{r_{p+1},\frac{pm_{p+1}}{(p+1)m_p} r'_p\right\},
\eeqs
for $p\in\ZZ_+$. We will prove that this $(r'_p)$ satisfies the desired conditions. First, note that $r'_p\leq r_p$, for all $p\in\ZZ_+$. Since $r_{p+1}\geq r_p$ and $pm_{p+1}\geq(p+1)m_p$ (which is equivalent to $(M.4)$ for $M_p$) it follows that
\beqs
r'_{p+1}=\min\left\{r_{p+1},\frac{pm_{p+1}}{(p+1)m_p} r'_p\right\}\geq \min\{r_p,r'_p\}=r'_p,
\eeqs
for all $p\in\ZZ_+$. To prove that $r'_p$ tends to infinity, assume the contrary. Since we already proved that $r'_p$ is monotonically increasing, there exists $C>1$ such that $r'_p\leq C$ for all $p\in\ZZ_+$. Since $(r_p)\in \mathfrak{R}$, there exists $p_0\in\ZZ_+$ such that $r_p>C+1$ for all $p\geq p_0$. But then, $\ds r'_{p+1}=\frac{pm_{p+1}}{(p+1)m_p} r'_p$ for all $p\geq p_0$. Then, for $p\geq p_0$, we have
\beqs
r'_{p+1}&=&\frac{pm_{p+1}}{(p+1)m_p} r'_p=\frac{pm_{p+1}}{(p+1)m_p}\cdot\frac{(p-1)m_p}{pm_{p-1}} r'_{p-1}\\
&=&...=\frac{pm_{p+1}}{(p+1)m_p}\cdot\frac{(p-1)m_p}{pm_{p-1}}\cdot...\cdot\frac{p_0 m_{p_0+1}}{(p_0+1)m_{p_0}} r'_{p_0}\\
&=&\frac{p_0 m_{p+1}}{(p+1)m_{p_0}}r'_{p_0}
\eeqs
which tends to infinity when $p\rightarrow\infty$ because of $(M.3)'$ for $M_p$. Hence $(r'_p)\in\mathfrak{R}$. The claim that $N_p$ satisfies $(M.4)$ is equivalent to $\ds r'_{p+1}\leq \frac{p m_{p+1}}{(p+1)m_p}r'_p$, which is true by the way we defined the sequence $(r'_p)$. Moreover, if we put $n_p=N_p/N_{p-1}$, then $n_p=m_p/r'_p\geq m_p/r_p$ and we know that $\sum r_p/m_p$ converges. Hence $N_p$ satisfies $(M.3)'$.
\end{proof}

After these preliminaries we can prove the following two results.

\begin{theorem}\label{3970}
Let $a\in\Gamma^{*,\infty}_{A_p,\rho}\left(\RR^{2d}\right)$ be $(\tilde{M}_p)$-elliptic (resp. $\{\tilde{M}_p\}$-elliptic) and let $f\in\SSS^*\left(\RR^d\right)$. Assume that $u\in H^s\left(\RR^d\right)$, $s>d/2$, is a solution of $Au=f+F[u]$, where $F[u]$ is defined by \eqref{343} and \eqref{nonlinearterm} (resp. \eqref{345} and \eqref{nonlinearterm}). Then we have $\|u\|_{s,h}<\infty$ for every $h>0$ (resp. for some $h>0$).
\end{theorem}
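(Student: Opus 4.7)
The plan is to apply the parametrix $B$ of $A$ from Theorem \ref{parametrix} to reduce to an estimate on $BF[u]$, and then close the argument by a bootstrap on the partial sums $H^{s,h}_N[u]$ introduced at the beginning of this section. Since $BA = I + T$ with $T:\SSS^{*\prime}(\RR^d) \to \SSS^*(\RR^d)$ continuous, the equation $Au = f + F[u]$ yields $u = Bf + BF[u] - Tu$. Because $B$ maps $\SSS^*$ continuously into itself and $T$ is $*$-regularizing (so $Tu \in \SSS^*$, using $u \in H^s \subset \SSS^{*\prime}$), Lemma \ref{separation} gives $\|Bf\|_{s,h} < \infty$ and $\|Tu\|_{s,h} < \infty$ (for every $h>0$ in the $(M_p)$ case, and for some $h>0$ in the $\{M_p\}$ case). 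The remaining task is to bound $\sum_\alpha (h^{|\alpha|}/M_\alpha)\|x^\alpha BF[u]\|_{H^s}$ uniformly in the truncation parameter.

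The next step is to commute $x^\alpha$ past $B$ via the exact Leibniz-type identity
\beqs
x^\alpha B\varphi = B(x^\alpha\varphi) - \sum_{0<\gamma\leq\alpha}\binom{\alpha}{\gamma}\, x^{\alpha-\gamma}\, (D_\xi^\gamma b)(x,D)\,\varphi,
\eeqs
obtained by integration by parts in $\xi$. Inserting $\varphi = p_\beta u^{|\beta|}$ and using the trivial factorisation $x^\alpha u^{|\beta|} = (x^\alpha u)\cdot u^{|\beta|-1}$, then iterating the same identity on the surviving $x^{\alpha-\gamma}$ factors, one expresses $x^\alpha B(p_\beta u^{|\beta|})$ as a finite linear combination of terms of the form $(C_{\alpha,\alpha'}\circ p_\beta)\bigl((x^{\alpha'} u)\cdot u^{|\beta|-1}\bigr)$ for $|\alpha'| \leq |\alpha|$, where each $C_{\alpha,\alpha'}$ is a composition of $B$ with operators whose symbol is $D_\xi^{\alpha-\alpha'}b$ multiplied by appropriate powers of $x$. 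By Lemmas \ref{340}, \ref{349} and \ref{350}, for $\tilde h, h_0 > 0$ arbitrarily small (respectively, for some small $\tilde h, h_0 > 0$) one obtains
\beqs
\|C_{\alpha,\alpha'}\circ p_\beta\|_{\mathcal L_b(H^s)} \leq C\,\tilde h^{|\beta|}\,h_0^{|\alpha-\alpha'|}\,M_{\alpha-\alpha'},
\eeqs
with the corresponding $M_{\alpha-\alpha'}/R_{\alpha-\alpha'}$ bound in the Roumieu case.

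Using the Sobolev algebra property $\|(x^{\alpha'} u) u^{|\beta|-1}\|_{H^s} \leq C_s^{|\beta|}\|x^{\alpha'} u\|_{H^s}\|u\|_{H^s}^{|\beta|-1}$ for $s>d/2$, multiplying by $h^{|\alpha|}/M_\alpha$, and summing over $|\alpha|\leq N$, the condition $(M.2)$ on $M_\alpha$ lets one absorb $M_{\alpha-\alpha'}/M_\alpha$ against $1/M_{\alpha'}$ at the cost of a geometric factor $H^{|\alpha|}$; together with the $h_0^{|\alpha-\alpha'|}$ factor, a small enough choice of $h_0$ produces
\beqs
\sum_{|\alpha|\leq N}\frac{h^{|\alpha|}}{M_\alpha}\|x^\alpha BF[u]\|_{H^s}\leq\sigma\, H^{s,h}_N[u],
\eeqs
with $\sigma \leq C'\,\|u\|_{H^s}\sum_{|\beta|\geq 2}(\tilde h\,C_s\|u\|_{H^s})^{|\beta|-1}$. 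Choosing $\tilde h$ small enough makes $\sigma<1$ for every $h>0$ in the $(M_p)$ case, and for some $h>0$ in the $\{M_p\}$ case. Combining with the finite contributions from $\|Bf\|_{s,h}$ and $\|Tu\|_{s,h}$ one obtains $H^{s,h}_N[u]\leq C_0 + \sigma H^{s,h}_N[u]$ uniformly in $N$, hence $H^{s,h}_N[u]\leq C_0/(1-\sigma)$, and passing to $N\to\infty$ yields $\|u\|_{s,h}<\infty$.

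The main obstacle I expect is the careful bookkeeping of the iterated commutator expansion and the verification that the constants arising from the multi-index Leibniz rule, the composition-norm estimates (Lemmas \ref{340}, \ref{349}, \ref{350}), and the $(M.2)$ absorption can all be made consistent with the weight $h^{|\alpha|}/M_\alpha$. The Roumieu $\{M_p\}$ case is more delicate because small parameters from the ellipticity assumption, from the $\{A_p\}$-calculus and from the decay of the coefficients $p_\beta$ coexist, and must be harmonised through a diagonalisation argument of the type supplied by Lemma \ref{115}.
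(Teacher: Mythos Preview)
There is a genuine gap. Your bootstrap inequality $H^{s,h}_N[u]\leq C_0+\sigma H^{s,h}_N[u]$ cannot be rearranged to $H^{s,h}_N[u]\leq C_0/(1-\sigma)$ unless $H^{s,h}_N[u]<\infty$ is known beforehand, and from the sole hypothesis $u\in H^s$ there is no reason why $\|x^\alpha u\|_{H^s}$ should be finite for $|\alpha|\geq 1$. Your leading term $(B\circ p_\beta)\bigl((x^\alpha u)\,u^{|\beta|-1}\bigr)$, produced by the factorisation $x^\alpha u^{|\beta|}=(x^\alpha u)\cdot u^{|\beta|-1}$, puts $\|x^\alpha u\|_{H^s}$ on the right with the \emph{same} $|\alpha|$ as on the left, so no induction on $N$ is available to break the circularity. (Incidentally, the Leibniz identity you quote is also misstated: the correct formula has $x^{\alpha-\gamma}$ acting on $\varphi$, namely $x^\alpha B\varphi=\sum_{\gamma\leq\alpha}{\alpha\choose\gamma}(-1)^{|\gamma|}(D_\xi^\gamma b)(x,D)(x^{\alpha-\gamma}\varphi)$; with this version the ``iteration on surviving $x^{\alpha-\gamma}$ factors'' is unnecessary, but the $\gamma=0$ term still causes the same circularity.)

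The paper avoids this by commuting $x^\alpha$ with $A$ rather than with $B$ and, for the nonlinear term, by peeling off \emph{one} factor $x_{j_\alpha}$ and attaching it to $p_\beta$: since $B\circ(x_{j_\alpha}p_\beta)$ is $H^s$-bounded by Lemma~\ref{350}, one obtains $\|B(x^\alpha p_\beta u^{|\beta|})\|_{H^s}\lesssim \|x^{\alpha-e_{j_\alpha}}u\|_{H^s}\,\|u\|_{H^s}^{|\beta|-1}$, which involves a strictly lower power of $x$. Together with the commutator estimate of Lemma~\ref{367} and the analogous treatment of $T(x^\alpha u)=(T\circ x_{j_\alpha})(x^{\alpha-e_{j_\alpha}}u)$, this gives $H^{s,h}_N[u]\leq C+\varepsilon\, H^{s,h}_{N-1}[u]$, and a straightforward induction on $N$ yields both finiteness of each $H^{s,h}_N[u]$ and the uniform bound. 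A last remark: the auxiliary sequence $(r'_p)$ from Lemma~\ref{115} is needed in the \emph{Beurling} $(M_p)$ case, to upgrade the conclusion from ``some $h$'' to ``every $h$''; the Roumieu $\{M_p\}$ case is in fact the simpler one here.
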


\begin{theorem}\label{3975}
Let $a\in\Gamma^{*,\infty}_{A_p,\rho}\left(\RR^{2d}\right)$ be $(\tilde{M}_p)$-elliptic (resp. $\{\tilde{M}_p\}$-elliptic) and let $f\in\SSS^*\left(\RR^d\right)$. Assume that $u\in H^s\left(\RR^d\right)$, $s>d/2$, is a solution of $Au=f+F[u]$, where $F[u]$ is defined by \eqref{343} and \eqref{nonlinearterm} (resp. \eqref{345} and \eqref{nonlinearterm}).\\
\indent i) If $F[u]$ is a finite sum, then we have $\|u\|_{\{s,h\}}<\infty$ for every $h>0$ (resp. for some $h>0$).\\
\indent ii) If $F[u]$ is infinite sum, then $\ds\sum_{\alpha} \frac{h^{|\alpha|}}{\tilde{M}_{\alpha}}\|\partial^{\alpha}u\|_{H^s}<\infty$ for some $h>0$ in the $(M_p)$ case (resp. $\ds\sum_{\alpha} \frac{h^{|\alpha|}\|\partial^{\alpha}u\|_{H^s}}{\tilde{M}_{\alpha}\prod_{j=1}^{|\alpha|}k_j}<\infty$ for some $h>0$ in the $\{M_p\}$ case).
\end{theorem}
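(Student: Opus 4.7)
The plan is to combine the parametrix $B$ provided by Theorem \ref{parametrix} (so $BA=\mathrm{Id}+T$ with $T$ $*$-regularizing) with a bootstrap-style estimate. Commuting $\partial^\gamma$ through $A$ via the Leibniz rule on the equation $Au=f+F[u]$ and applying $B$ on the left gives the master identity
\begin{equation*}
\partial^\gamma u = B\partial^\gamma f + B\partial^\gamma F[u] - \sum_{0<\delta\leq\gamma}\binom{\gamma}{\delta}B\circ(\partial^\delta_x a)(x,D)\,\partial^{\gamma-\delta}u - T\partial^\gamma u.
\end{equation*}
After taking $H^s$-norms, multiplying by $h^{|\gamma|}/\tilde M_\gamma$ (resp.\ $h^{|\gamma|}/M_\gamma$ in part (i)), and summing over $|\gamma|\leq N$, the goal is to bound the resulting quantity $\Xi_N$ uniformly in $N$ via a recursive inequality that closes for $h$ small enough.

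The data term $B\partial^\gamma f$ contributes only a fixed constant after summation, since $f\in\mathcal{S}^*$, $B$ preserves $\mathcal{S}^*$, and Lemma \ref{separation} equates $\mathcal{S}^*$ with the relevant Sobolev-type scale. The regularizing term $T\partial^\gamma u$ is handled by transposing $\partial^\gamma$ onto the $\mathcal{S}^*$-kernel of $T$. For the commutator sum, Lemma \ref{340} provides $\|B\circ(\partial^\delta_x a)(x,D)\|_{\mathcal{L}(H^s)}\leq CM_\delta/R_\delta$ for some $(r_p)\in\mathfrak{R}$ in the $(M_p)$ case (and $\leq Ch_2^{|\delta|}M_\delta$ for some $h_2$ in the $\{M_p\}$ case); combining with the inclusion $M_p\subset\tilde M_p$ and the log-convex inequality $\tilde M_\delta \tilde M_{\gamma-\delta}\leq \tilde M_\gamma$ (a consequence of $(M.1)$ by monotonicity of $\tilde m_p=\tilde M_p/\tilde M_{p-1}$), after the substitution $\mu=\gamma-\delta$ and rearrangement of the binomial sum, this contribution is bounded by $\varepsilon(h)\,\Xi_N$ with $\varepsilon(h)\to 0$ as $h\to 0$.

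The nonlinear term $B\partial^\gamma F[u]$ is the crux. Leibniz expansion of $\partial^\gamma(p_\beta u^{|\beta|})$ gives
\begin{equation*}
B\partial^\gamma F[u]=\sum_{|\beta|\geq 2}\sum_{\mu\leq\gamma}\binom{\gamma}{\mu}B\bigl[(\partial^\mu p_\beta)\,\partial^{\gamma-\mu}(u^{|\beta|})\bigr],
\end{equation*}
and Lemma \ref{349} specialised to $\gamma=0$ in its own notation yields $\|B\circ(\partial^\mu p_\beta)\|_{\mathcal{L}(H^s)}\leq C\tilde h^{|\beta|}$ uniformly in $\mu$, for any $\tilde h>0$. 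Applying the multinomial formula to $\partial^{\gamma-\mu}(u^{|\beta|})$, the Banach algebra property of $H^s$ for $s>d/2$ to split products $\prod_j\partial^{\nu^{(j)}}u$ into factors, and distributing $h^{|\gamma|}/\tilde M_\gamma$ across $\mu$ and the $\nu^{(j)}$ via $\tilde M_\gamma \geq \tilde M_\mu\prod_j\tilde M_{\nu^{(j)}}$, one sees that the weighted space $\{v:\sum_\gamma h^{|\gamma|}\|\partial^\gamma v\|_{H^s}/\tilde M_\gamma<\infty\}$ enjoys a Banach algebra structure (standard for ultra-differentiable Sobolev spaces with $s>d/2$). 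This majorises the nonlinear contribution by a convergent series $\sum_{|\beta|\geq 2}(C\tilde h)^{|\beta|}\Xi_N^{|\beta|}$ as long as $\tilde h\,\Xi_N$ stays within the radius of convergence.

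Collecting all terms yields the implicit inequality $\Xi_N \leq C_0+\varepsilon(h)\Xi_N+\Phi(\Xi_N)$, where $\Phi(t)=\sum_{k\geq 2}c_kt^k$ is convergent and $\varepsilon(h)\to 0$ as $h\to 0$. A continuity/bootstrap argument starting from $\Xi_0=\|u\|_{H^s}<\infty$, with $h,\tilde h$ chosen sufficiently small, gives $\Xi_N\leq C$ uniformly in $N$; passing to the limit delivers the desired $\tilde M_\gamma$-estimate in case (ii). In case (i) the finiteness of $F[u]$ removes the $\beta$-series, and the same argument carried out with $M_\gamma$ in place of $\tilde M_\gamma$ (using $M_p\subset\tilde M_p$ to upgrade the coefficient estimates) yields the stronger bound $\|u\|_{\{s,h\}}<\infty$. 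The main obstacle is the nonlinear step in case (ii): the multinomial combinatorics in $\partial^{\gamma-\mu}(u^{|\beta|})$ must be absorbed uniformly in $|\beta|$ by the $\tilde h^{|\beta|}$-decay from Lemma \ref{349}, and the mismatch between the $M_p$-weights coming from the symbol of $a$ and the $\tilde M_p$-weights encoded in the nonlinear data (which is precisely what forces the output to be expressed in terms of $\tilde M_\gamma$) must be reconciled using $M_p\subset\tilde M_p$ together with Lemma \ref{115} in the Roumieu case to align the auxiliary sequences in $\mathfrak{R}$.
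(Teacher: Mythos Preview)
Your overall architecture---commute $\partial^\gamma$ through $A$, apply the parametrix $B$, take weighted $H^s$-norms, sum to $|\gamma|\leq N$, and iterate---is exactly the paper's. There are, however, two points where your sketch diverges and one of them is a genuine gap.

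\textbf{Part (ii), the nonlinear term.} Here you and the paper do different things. You invoke Lemma~\ref{349} only in the degenerate case $\gamma=0$ (in that lemma's notation), obtaining $\|B\circ(\partial^\mu p_\beta)\|_{\mathcal L(H^s)}\le C\tilde h^{|\beta|}$, and then you expand $\partial^{\gamma-\mu}(u^{|\beta|})$ multinomially to produce a nonlinear term $\Phi(\Xi_N)=\sum_{k\ge 2}c_k\Xi_N^{\,k}$ in the recursion. The paper instead uses Lemma~\ref{349} in full strength, i.e.\ the bound $\|B\circ D^{\mu}p_\beta(x)\partial^{\gamma-\mu}\|_{\mathcal L(H^s)}\le C\tilde h^{|\beta|}l^{|\gamma-\mu|}\tilde M_{\gamma-\mu}$, and applies this composite operator directly to $u^{|\beta|}$ (no derivatives on $u$). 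After cancelling $\tilde M_{\gamma}$ against the weight and choosing $h<1/(4l)$, the entire nonlinear contribution is bounded by a \emph{constant} $C_2$, so the recursion becomes linear: $\Xi_N\le C_0+C_2+2\varepsilon\,\Xi_{N-1}$. This is considerably simpler than your implicit inequality $\Xi_N\le C_0+\varepsilon\,\Xi_{N-1}+\Phi(\Xi_N)$. Your version can be closed, but you would first have to show that each $\|\partial^\gamma u\|_{H^s}$ is finite (a priori $u$ is only in $H^s$), and then run a genuine fixed-point/continuity argument to keep $\Xi_N$ below the radius of convergence of $\Phi$; none of this is spelled out. The paper's route avoids the issue entirely.

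\textbf{Part (i), the $(M_p)$ case.} This is the real gap. The conclusion in the Beurling case is $\|u\|_{\{s,h\}}<\infty$ \emph{for every} $h>0$, not just for some small $h$. Your argument (carry out the iteration with $M_\gamma$-weights and choose $h$ small enough to close) only yields the latter. The paper's mechanism for the upgrade is to replace $M_\alpha$ by $N_\alpha=M_\alpha/R'_\alpha$ with $(r'_p)\in\mathfrak R$ chosen via Lemma~\ref{115} so that $N_p$ still satisfies $(M.3)'$ and $(M.4)$, and so that the sharpened bounds $\|B\circ(\partial^\beta_x a)(x,D)\|_{\mathcal L(H^s)}\le C N_\beta$ (Lemma~\ref{340}) and $\|B\circ p(x)\partial_j\|_{\mathcal L(H^s)}\le C$ (Lemma~\ref{350}) are available. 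One then proves finiteness of $E^{s,h,(r'_p)}_N[u]$ for \emph{some} small $h$, and concludes via $\tilde h^{\,p}\le C h^{p}R'_p$ that $\sum_\alpha \tilde h^{|\alpha|}\|\partial^\alpha u\|_{H^s}/M_\alpha<\infty$ for every $\tilde h$. Your remark that Lemma~\ref{115} is needed ``in the Roumieu case'' has it backwards: the $\mathfrak R$-sequence trick is precisely what is required in the \emph{Beurling} case to pass from ``some $h$'' to ``every $h$'', both here and in the decay theorem. Without it, part~(i) in the $(M_p)$ case is not proved.

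Minor point: for the regularizing piece the paper does not transpose $\partial^\gamma$ onto the kernel of $T$; it simply peels off one derivative, writing $T\partial^\gamma u=(T\circ\partial_{j_\gamma})\partial^{\gamma-e_{j_\gamma}}u$ with $\|T\circ\partial_j\|_{\mathcal L(H^s)}\le C$, so that this term also contributes $\varepsilon\,\Xi_{N-1}$ and fits the same inductive scheme.
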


Notice that by Lemma \ref{separation}, Theorem \ref{mainthm} follows directly from the combination of Theorems \ref{3970} and \ref{3975}. Let us prove the two latter results.\\

\begin{lemma}\label{367}
Let $A=a(x,D)$ be $(\tilde{M}_p)$-elliptic (resp. $\{\tilde{M}_p\}$-elliptic) operator and let $B$ be its parametrix. Then the following properties hold:\\
\indent $i)$ In the $(M_p)$ case, let $(r_p)\in\mathfrak{R}$ be the sequence in Lemma \ref{340}. Let $(r'_p)\in\mathfrak{R}$ be a sequence such that $(r'_p)\leq (r_p)$, $r'_1=1$ and the sequence $M_p/R'_p$ satisfies $(M.3)'$ and $(M.4)$. Then for each $0<\varepsilon<1$ there exists $h_0=h_0(\varepsilon)$ such that for every $0<h<h_0$
\beqs
\sum_{|\alpha|=1}^N\frac{h^{|\alpha|}R'_{\alpha}}{M_{\alpha}}\left\|B[A,x^{\alpha}]u\right\|_{H^s}\leq \varepsilon H^{s,h,(r'_p)}_{N-1}[u].
\eeqs
\indent $ii)$ In the $\{M_p\}$ case, for each $0<\varepsilon<1$ there exists $h_0=h_0(\varepsilon)$ such that for all $0<h<h_0(\varepsilon)$
\beqs
\sum_{|\alpha|=1}^N\frac{h^{|\alpha|}}{M_{\alpha}}\left\|B[A,x^{\alpha}]u\right\|_{H^s}\leq \varepsilon H^{s,h}_{N-1}[u].
\eeqs
\end{lemma}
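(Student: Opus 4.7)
\noindent\emph{Plan.} The approach is to expand $[A,x^\alpha]$ symbolically, compose with the parametrix $B$, rearrange so that the powers of $x$ act first on $u$, and then invoke Lemma \ref{340} together with the combinatorial properties of the sequences $M_p$, $R_p$ and $R'_p$.

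Since $x^\alpha$ is a polynomial, $D^\nu_x(x^\alpha)=0$ for $\nu\not\leq\alpha$, so the composition formula of Proposition \ref{composition} applied to $A\circ M_{x^\alpha}$ truncates. After subtracting the $\nu=0$ term against the (exact) symbol $x^\alpha a(x,\xi)$ of $M_{x^\alpha}\circ A$, one obtains
\beqs
[A,x^\alpha]\,=\,\sum_{0<\nu\leq\alpha}\binom{\alpha}{\nu}(-i)^{|\nu|}\,M_{x^{\alpha-\nu}}\circ(\partial^\nu_\xi a)(x,D) \,+\, T_\alpha,
\eeqs
where $\{T_\alpha\}$ is an (appropriately normalised) equicontinuous family of regularizing operators. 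Composing with $B$ from the left and iteratively commuting the multiplications $M_{x^\beta}$ past the pseudodifferential factors---using that $[M_{x^\beta},(\partial^\nu_\xi a)(x,D)]$ has a symbolic expansion which, modulo regularizing, is a finite sum of operators of the same type but with strictly higher $\xi$-order---the recursion terminates in at most $|\alpha|$ steps and yields
\beqs
B[A,x^\alpha]\,u\,=\,\sum_{0<\gamma\leq\alpha}e_{\alpha,\gamma}\,B\circ(\partial^\gamma_\xi a)(x,D)\,(x^{\alpha-\gamma}u)\,+\,R_\alpha u,
\eeqs
with $\{R_\alpha\}$ still equicontinuous in $\mathcal{L}_b(\mathcal{S}^{*\prime}(\RR^d),\mathcal{S}^{*}(\RR^d))$. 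A short symbol-matching computation, reducing to the Vandermonde identity $\binom{\alpha}{\gamma}\binom{\alpha-\gamma}{\mu-\gamma}=\binom{\alpha}{\mu}\binom{\mu}{\gamma}$, determines $|e_{\alpha,\gamma}|=\binom{\alpha}{\gamma}$.

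By Lemma \ref{340}, $\|B\circ(\partial^\gamma_\xi a)(x,D)\|_{\mathcal{L}(H^s)}\leq C M_\gamma/R_\gamma$ in the $(M_p)$ case (resp.\ $\leq C h_1^{|\gamma|}M_\gamma$ for a fixed $h_1>0$ in the $\{M_p\}$ case). Multiplying by $h^{|\alpha|}R'_\alpha/M_\alpha$, summing over $1\leq|\alpha|\leq N$, and reindexing $\alpha=\beta+\gamma$, the crucial input is the supermultiplicative inequality
\beqs
\binom{\beta+\gamma}{\gamma}\,\frac{M_\beta M_\gamma}{R'_\beta R'_\gamma}\;\leq\;\frac{M_{\beta+\gamma}}{R'_{\beta+\gamma}},
\eeqs
which is equivalent to log-convexity of $p\mapsto M_p/(p!R'_p)$, i.e.\ property $(M.4)$ for the sequence $M_p/R'_p$---precisely the property ensured by Lemma \ref{115}. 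After this the inner sum over $\gamma$ collapses to $\sum_{\gamma>0}h^{|\gamma|}R'_\gamma/R_\gamma\leq\sum_{\gamma>0}h^{|\gamma|}$ (since $R'_p\leq R_p$), which is $<\varepsilon$ once $h<h_0(\varepsilon)$ is chosen small enough. The remainder term $R_\alpha u$ is absorbed since equicontinuity of $\{R_\alpha\}$ gives $\|R_\alpha u\|_{H^s}\leq C\|u\|_{H^s}$ uniformly in $\alpha$, so $C\|u\|_{H^s}\sum_\alpha h^{|\alpha|}R'_\alpha/M_\alpha$ is dominated by the $\beta=0$ term of $H^{s,h,(r'_p)}_{N-1}[u]$ for $h$ small. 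The $\{M_p\}$ case follows by the same argument, using $(M.4)$ for $M_p$ itself.

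The main obstacle I foresee is the combinatorial bookkeeping through the iteration (including the verification that $|e_{\alpha,\gamma}|=\binom{\alpha}{\gamma}$) together with the compatibility between the sequences $R_p$ (from Lemma \ref{340}) and $R'_p$ (from Lemma \ref{115}): one needs simultaneously $R'_p\leq R_p$ (so that $R'_\gamma/R_\gamma\leq 1$) and property $(M.4)$ for $M_p/R'_p$ (so that the binomial coefficient can be absorbed), which is precisely what Lemma \ref{115} is designed to provide.
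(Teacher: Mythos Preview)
Your overall strategy is sound and the combinatorial endgame (use $(M.4)$ for $N_p=M_p/R'_p$ to absorb the binomial coefficient, then sum a geometric-type tail in $h$) matches the paper. However, you take a detour that the paper avoids entirely, and this detour introduces complications you do not fully resolve.

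The paper does \emph{not} expand $A\circ M_{x^\alpha}$ via the composition calculus. Instead it uses the elementary Leibniz identity
\[
x^{\alpha}Au(x)=\sum_{\beta\leq\alpha}\binom{\alpha}{\beta}(-1)^{|\beta|}(D^{\beta}_{\xi}a)(x,D)\bigl(x^{\alpha-\beta}u(x)\bigr),
\]
obtained by writing $x^\alpha e^{ix\xi}=D_\xi^\alpha e^{ix\xi}$ and integrating by parts once. This is an \emph{exact} formula with no remainder, and crucially the multiplication $x^{\alpha-\beta}$ already sits on the \emph{right}, acting on $u$. Thus
\[
B[A,x^{\alpha}]u=\sum_{\substack{\beta\leq\alpha\\\beta\neq0}}\binom{\alpha}{\beta}(-1)^{|\beta|+1}B(D^{\beta}_{\xi}a)(x,D)\bigl(x^{\alpha-\beta}u\bigr),
\]
and Lemma \ref{340} applies immediately to $B(D^\beta_\xi a)(x,D)$ with no iteration and no regularizing tail.

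Your route expands $A\circ M_{x^\alpha}$ through Proposition \ref{composition}, which puts $M_{x^{\alpha-\nu}}$ on the \emph{left} of $(\partial^\nu_\xi a)(x,D)$, forcing you into a commutator recursion to move it to the right. Two remarks: first, since $x^\alpha$ is a polynomial the asymptotic expansion is finite and exact, so the operators $T_\alpha$ and $R_\alpha$ you introduce are in fact identically zero---you are carrying phantom terms. Second, the claim that $\{R_\alpha\}$ is equicontinuous ``appropriately normalised'' is not justified in your sketch: Proposition \ref{composition} requires the symbol families to be bounded in $\Gamma^{*,\infty}_{A_p,\rho}$, and the unnormalised polynomials $x^\alpha$ are not. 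You would have to track the normalisation through the iteration, which is exactly the bookkeeping you flag as the main obstacle. The paper's one-line identity bypasses all of this.
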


\begin{proof} First we prove the $(M_p)$ case. (The existence of such sequence $(r'_p)\in\mathfrak{R}$ is given by Lemma \ref{115}.) For shorter notation, put $N_p=M_p/R'_p$, for $p\in\ZZ_+$ and $N_0=1$. Observe that
$$
x^{\alpha}Au(x)
=\sum_{\beta\leq\alpha}{\alpha\choose\beta}(-1)^{|\beta|}(D^{\beta}_{\xi}a)(x,D)(x^{\alpha-\beta}u(x)).
$$
So, we obtain
\beqs
B[A,x^{\alpha}]u=\sum_{\substack{\beta\leq \alpha\\ \beta\neq 0}} {\alpha\choose\beta}(-1)^{|\beta|+1}B(D^{\beta}_{\xi} a)(x,D) (x^{\alpha-\beta}u(x)).
\eeqs
By Lemma \ref{340}, there exists $C>0$ such that $\left\|B(D^{\beta}_{\xi} a)(x,D)\right\|_{\mathcal{L}_b(H^s)}\leq C N_{\beta}$. Let $0<\varepsilon<1$ be fixed. Choose $0<h_0<1/2$ such that $h_0<\varepsilon\left(2C\sum_{|\beta|=1}^{\infty}2^{-|\beta|+1}\right)^{-1}$. For $0<h<h_0$ we obtain\\
\\
$\ds \sum_{|\alpha|=1}^N\frac{h^{|\alpha|}}{N_{\alpha}}\left\|B[A,x^{\alpha}]u\right\|_{H^s}$
\beqs
&\leq&\sum_{|\alpha|=1}^N \frac{h^{|\alpha|}}{N_{\alpha}} \sum_{\substack{\beta\leq\alpha\\ \beta\neq 0}} {\alpha\choose\beta}\left\|B(D^{\beta}_{\xi} a) (x,D) x^{\alpha-\beta} u\right\|_{H^s}\\
&\leq& C\sum_{|\alpha|=1}^N \sum_{\substack{\beta\leq\alpha\\ \beta\neq 0}} \frac{h^{|\alpha|}N_{\beta}}{N_{\alpha}} {\alpha\choose\beta}\left\|x^{\alpha-\beta} u\right\|_{H^s}\leq C\sum_{|\beta|=1}^N h^{|\beta|} \sum_{\substack{\alpha\geq\beta\\ |\alpha|\leq N}} \frac{h^{|\alpha|-|\beta|}}{N_{\alpha-\beta}} \left\|x^{\alpha-\beta} u\right\|_{H^s}\\
&\leq&\varepsilon H^{s,h,(r'_p)}_{N-1}[u],
\eeqs
where in the third inequality, we used $(M.4)$ for $N_p$ and the fact $\ds {\alpha\choose\beta}\leq {|\alpha|\choose |\beta|}$. This completes the proof in the $(M_p)$ case. For the $\{M_p\}$ case, let $\varepsilon>0$. By Lemma \ref{340}, there exist $h_1,C>0$ such that $\left\|B(D^{\beta}_{\xi} a)(x,D)\right\|_{\mathcal{L}_b(H^s)}\leq C h_1^{|\beta|}M_{\beta}$. Choose $h_0>0$ such that $h_0h_1<1/2$ and $h_0h_1\leq \varepsilon\left(2C\sum_{|\beta|=1}^{\infty}2^{-|\beta|+1}\right)^{-1}$. Then, for $0<h<h_0$, similarly as before, we obtain
\beqs
\sum_{|\alpha|=1}^N\frac{h^{|\alpha|}}{M_{\alpha}}\left\|B[A,x^{\alpha}]u\right\|_{H^s}&\leq&C\sum_{|\beta|=1}^N (hh_1)^{|\beta|}\sum_{\substack{\alpha\geq\beta\\ |\alpha|\leq N}} \frac{h^{|\alpha|-|\beta|}}{M_{\alpha-\beta}} \left\|x^{\alpha-\beta} u\right\|_{H^s}\\
&\leq& \varepsilon H^{s,h}_{N-1}[u],
\eeqs
which completes the proof.
\end{proof}

\begin{lemma}\label{360}
Let $A=a(x,D)$ be $(\tilde{M}_p)$-elliptic (resp. $\{\tilde{M}_p\}$-elliptic) operator and let $B$ be its parametrix. Let $F[u]$ be defined by \eqref{343}, \eqref{nonlinearterm}  in the $(M_p)$ case (resp. by \eqref{345}, \eqref{nonlinearterm} in the $\{M_p\}$ case). Then the following properties hold:
\\
\indent i) In the $(M_p)$ case, let $(r_p)\in\mathfrak{R}$ be the sequence in Lemma \ref{350}. Let $(r'_p)\in\mathfrak{R}$ be a sequence such that $(r'_p)\leq (r_p)$, $r'_1=1$ and the sequence $M_p/R'_p$ satisfies $(M.3)'$ and $(M.4)$. Then for each $0<\varepsilon<1$ there exists $h_0=h_0(\varepsilon)$ such that for every $0<h<h_0$
\beqs
\sum_{|\alpha|=1}^N \frac{h^{|\alpha|}R'_{\alpha}}{M_{\alpha}}\left\|Bx^{\alpha}F[u]\right\|_{H^s}\leq \varepsilon H^{s,h,(r'_p)}_{N-1}[u].
\eeqs
\indent ii) In the $\{M_p\}$ case, for each $0<\varepsilon<1$ there exists $h_0=h_0(\varepsilon)$ such that for every $0<h<h_0$
\beqs
\sum_{|\alpha|=1}^N \frac{h^{|\alpha|}}{M_{\alpha}}\left\|Bx^{\alpha}F[u]\right\|_{H^s}\leq \varepsilon H^{s,h}_{N-1}[u].
\eeqs
\end{lemma}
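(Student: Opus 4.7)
The plan is to expand $F[u]=\sum_{|\beta|\geq 2}p_\beta u^{|\beta|}$ termwise, and for each $\alpha$ with $|\alpha|\geq 1$ to fix an index $j=j(\alpha)\in\{1,\dots,d\}$ with $\alpha_{j(\alpha)}\geq 1$ (say, the smallest such). Using the purely algebraic identity $x^\alpha p_\beta u^{|\beta|}=(x_j p_\beta)(x^{\alpha-e_j}u)u^{|\beta|-1}$ and regarding multiplication by $x_j p_\beta$ as a pseudodifferential operator, we have
\[
B\bigl(x^\alpha p_\beta u^{|\beta|}\bigr)=\bigl(B\circ x_j p_\beta\bigr)\!\left[(x^{\alpha-e_j}u)\,u^{|\beta|-1}\right].
\]
This shifts one power of $x$ from $x^\alpha$ into the multiplier, putting us exactly in the setting of Lemma \ref{350}(a), which gives, for every $\tilde h>0$, a constant $C>0$ with $\|B\circ x_j p_\beta\|_{\mathcal{L}(H^s)}\leq C\tilde h^{|\beta|}$ uniformly in $j\in\{1,\dots,d\}$ and $\beta\in\NN^d$.

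To control $(x^{\alpha-e_j}u)u^{|\beta|-1}$, I would invoke the Sobolev algebra property: since $s>d/2$, there exists $C_s\geq 1$ with $\|fg\|_{H^s}\leq C_s\|f\|_{H^s}\|g\|_{H^s}$, so by iteration
\[
\|(x^{\alpha-e_j}u)u^{|\beta|-1}\|_{H^s}\leq C_s^{|\beta|-1}\|x^{\alpha-e_j}u\|_{H^s}\|u\|_{H^s}^{|\beta|-1}.
\]
Since the number of multi-indices of length $n$ grows only polynomially, summation over $\beta$ yields
\[
\|Bx^\alpha F[u]\|_{H^s}\leq K(\tilde h)\,\|x^{\alpha-e_{j(\alpha)}}u\|_{H^s},\qquad K(\tilde h):=C\sum_{|\beta|\geq 2}\tilde h^{|\beta|}C_s^{|\beta|-1}\|u\|_{H^s}^{|\beta|-1},
\]
where, crucially, $K(\tilde h)=O(\tilde h^2)\to 0$ as $\tilde h\to 0^+$ because the series starts at $|\beta|=2$ (provided $\tilde h$ is taken small enough that $\tilde hC_s\|u\|_{H^s}<1/2$, say).

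For the $(M_p)$ case, I multiply by $h^{|\alpha|}R'_\alpha/M_\alpha$ and compare with the analogous weight at $\alpha-e_j$:
\[
\frac{h^{|\alpha|}R'_\alpha/M_\alpha}{h^{|\alpha|-1}R'_{\alpha-e_j}/M_{\alpha-e_j}}=h\,\frac{r'_{|\alpha|}}{m_{|\alpha|}}\leq hc,
\]
where boundedness of $r'_p/m_p$ follows from $(M.3)'$ for $N_p:=M_p/R'_p$ (which is precisely what Lemma \ref{115} guarantees by the choice of $(r'_p)$, since $\sum r'_p/m_p=\sum N_{p-1}/N_p<\infty$). Reindexing $\beta:=\alpha-e_{j(\alpha)}$, each $\beta$ with $|\beta|\leq N-1$ arises from at most $d$ choices of $\alpha$, hence
\[
\sum_{|\alpha|=1}^N\frac{h^{|\alpha|}R'_\alpha}{M_\alpha}\|Bx^\alpha F[u]\|_{H^s}\leq K(\tilde h)\cdot hcd\cdot H^{s,h,(r'_p)}_{N-1}[u].
\]
Fixing $\tilde h$ small enough that $K(\tilde h)\leq 1$ and then taking $h_0=\varepsilon/(cd)$ concludes the $(M_p)$ case. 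The $\{M_p\}$ case is entirely analogous with $R'_\alpha$ removed: the weight ratio becomes $h/m_{|\alpha|}\leq h$ since $m_1=1$ and $(m_p)$ is non-decreasing by $(M.1)$, so the final bound reads $K(\tilde h)\cdot hd\cdot H^{s,h}_{N-1}[u]$.

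The main obstacle is keeping the small parameters coherent: $\tilde h$ must be small enough to make the series defining $K(\tilde h)$ summable against the Sobolev algebra factors $(C_s\|u\|_{H^s})^{|\beta|-1}$, while Lemma \ref{350}(a) must still provide its estimate for this $\tilde h$; then $h$ must be chosen separately to absorb the weight-ratio constant $cd$. This two-stage choice works precisely because the sequence $(r'_p)$ was pre-selected via Lemma \ref{115} so that $M_p/R'_p$ satisfies both $(M.3)'$ (forcing $r'_p/m_p$ bounded) and $(M.4)$, and because Lemma \ref{350}(a) decouples the operator norm estimate from the value of $\alpha$, letting us move $x_{j(\alpha)}$ into $p_\beta$ without loss.
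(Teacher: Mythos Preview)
Your proof is correct and follows essentially the same route as the paper's: split off one factor $x_{j(\alpha)}$, apply Lemma~\ref{350}(a) to $B\circ x_j p_\beta$, use the Sobolev algebra property for the remaining product, and absorb the weight ratio by choosing $h$ small. The only minor variation is that in the $(M_p)$ case the paper invokes the sharper bound $\|B\circ x_j p_\beta\|_{\mathcal{L}(H^s)}\leq C_1/R'_\beta$ from the last part of Lemma~\ref{350}(a) to obtain summability in $\beta$ directly, whereas you use the estimate $\leq C\tilde h^{|\beta|}$ (valid for every $\tilde h>0$) and pick $\tilde h$ small---exactly the argument the paper itself uses in the $\{M_p\}$ case; both choices are legitimate consequences of Lemma~\ref{350}(a).
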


\begin{proof} $i)$ Observe that the existence of such sequence $(r'_p)\in\mathfrak{R}$ is given by Lemma \ref{115}. Let $\alpha \in \NN^d$ with $|\alpha|\geq 1$ and let $j=j_{\alpha}\in\{1,...,d\}$ such that $\alpha_{j}>0$. By Lemma \ref{350}, there exists $C_1>0$ such that
\beqs
\left\|(B\circ x_{j}p_{\beta}(x))(x,D)\right\|_{\mathcal{L}_b(H^s)}\leq C_1/R'_{\beta}.
\eeqs
We obtain that $\ds\left\|B\left( x_{j}p_{\beta}(x)x^{\alpha-e_{j}}u^{|\beta|}\right)\right\|_{H^s}\leq \frac{C_1}{R'_{\beta}}\left\|x^{\alpha-e_{j}}u^{|\beta|}\right\|_{H^s}$. Moreover
\beqs
\left\|x^{\alpha-e_{j}}u^{|\beta|}\right\|_{H^s}\leq C_s^{|\beta|-1}\left\|x^{\alpha-e_{j}}u\right\|_{H^s}\|u\|_{H^s}^{|\beta|-1}.
\eeqs
Hence
\beqs
\left\|B\left( p_{\beta}(x)x^{\alpha}u^{|\beta|}\right)\right\|_{H^s}\leq C_1\frac{\left(C_s\|u\|_{H^s}\right)^{|\beta|-1}}{R'_{\beta}}\left\|x^{\alpha-e_{j}}u\right\|_{H^s}.
\eeqs
Let $\ds C_2=\sum_{|\beta|=2}^{\infty}\frac{\left(C_s\|u\|_{H^s}\right)^{|\beta|-1}}{R'_{\beta}}$. We obtain
\beqs
\sum_{|\alpha|=1}^N\frac{h^{|\alpha|}}{N_{\alpha}}\left\|Bx^{\alpha}F[u]\right\|_{H^s}&\leq& \sum_{|\alpha|=1}^N\sum_{|\beta|=2}^{\infty} \frac{h^{|\alpha|}}{N_{\alpha}}\left\|B(x^{\alpha}p_{\beta}(x)u^{|\beta|})\right\|_{H^s}\\
&\leq& C_1C_2h\sum_{|\alpha|=1}^N\frac{h^{|\alpha|-1}}{N_{\alpha-e_{j}}} \left\|x^{\alpha-e_{j_{\alpha}}}u\right\|_{H^s}\leq C_3 hH^{s,h,(r'_p)}_{N-1}[u].
\eeqs
Moreover, for fixed $0<\varepsilon<1$, since $C_3$ does not depend on $h$, we can find $h_0=h_0(\varepsilon)$ such that for all $0<h<h_0$
\beqs
\sum_{|\alpha|=1}^N\frac{h^{|\alpha|}}{N_{\alpha}}\left\|Bx^{\alpha}F[u]\right\|_{H^s}\leq \varepsilon H^{s,h,(r'_p)}_{N-1}[u],
\eeqs
which complete the proof in the $(M_p)$ case. \\
\indent $ii)$ In the $\{M_p\}$ case by using Lemma \ref{350}, one similarly obtains that for every $\tilde{h}>0$ there exists $C_1>0$ such that
\beqs
\left\|B\left( p_{\beta}(x)x^{\alpha}u^{|\beta|}\right)\right\|_{H^s}\leq C_1\left(\tilde{h}C_s\|u\|_{H^s}\right)^{|\beta|-1}\left\|x^{\alpha-e_{j_{\alpha}}}u\right\|_{H^s}.
\eeqs
Fix $\tilde{h}$ such that $\tilde{h}C_s\|u\|_{H^s}<1/2$. We have
\beqs
\sum_{|\alpha|=1}^N\frac{h^{|\alpha|}}{M_{\alpha}}\left\|Bx^{\alpha}F[u]\right\|_{H^s}\leq C'_3 hH^{s,h}_{N-1}[u],
\eeqs
for a constant $C'_3$ which is the same for all $h$. Hence, we obtain the claim in the $\{M_p\}$ case.
\end{proof}

\noindent
\textit{Proof of Theorem \ref{3970}.} Fixed $\alpha \in \NN^d$  let us multiply both members of \eqref{maineq} by $x^{\alpha}$. We have $x^{\alpha}Au=x^{\alpha}f+x^{\alpha}F[u]$. Then, introducing commutators we get
\begin{equation} \label{pippo}
A(x^{\alpha}u)=[A,x^{\alpha}]u+x^{\alpha}f+x^{\alpha}F[u].
\end{equation}
By applying the parametrix $B$ of $A$ to both sides of \eqref{pippo} we have
\beq\label{363}
x^{\alpha}u=B[A,x^{\alpha}]u+B(x^{\alpha}f)+B(x^{\alpha}F[u])+T(x^{\alpha}u)
\eeq
for some *-regularizing operator $T$.
We first consider the $(M_p)$ case. Since $f\in\SSS^{(M_p)}$, for every $\tilde{h}>0$ we have $\ds\sup_{\alpha}\frac{\|x^{\alpha} f\|_{H^s}}{\tilde{h}^{|\alpha|}M_{\alpha}}<\infty$. Hence, by Lemma 3.4 of \cite{Komatsu3}, there exist $(\tilde{r}_p)\in\mathfrak{R}$ and $C'>0$ such that $\|x^{\alpha} f\|_{H^s}\leq C'M_{\alpha}/\tilde{R}_{\alpha}$. Obviously, without loss of generality, we can assume that $\tilde{r}_1=1$. By Lemma \ref{115} we can find $(r'_p)\in\mathfrak{R}$ such that $r'_1=1$, $(r'_p)\leq (\tilde{r}_p)$, $(r'_p)$ is smaller than the sequences in Lemmas \ref{340} and \ref{350}, and the sequence $N_p=M_p/R'_p$, for $p\in\ZZ_+$ and $N_0=1$, satisfies $(M.3)'$, $(M.4)$ and $N_1=1$. If we multiply (\ref{363}) by $h^{|\alpha|}/N_{\alpha}$, take Sobolev norms and sum up for $|\alpha|\leq N$, we obtain
\beqs
H^{s,h,(r'_p)}_N[u]&\leq&\|u\|_{H^s}+\sum_{|\alpha|=1}^N \frac{h^{|\alpha|}}{N_{\alpha}}\|B[A,x^{\alpha}]u\|_{H^s}+\sum_{|\alpha|=1}^N \frac{h^{|\alpha|}}{N_{\alpha}}\|B(x^{\alpha}f)\|_{H^s}\\
&{}&+\sum_{|\alpha|=1}^N \frac{h^{|\alpha|}}{N_{\alpha}}\|B(x^{\alpha}F[u])\|_{H^s}+\sum_{|\alpha|=1}^N \frac{h^{|\alpha|}}{N_{\alpha}}\|T(x^{\alpha}u)\|_{H^s}.
\eeqs
We will estimate each of the terms above. First, since $B$ is bounded on $H^s$, there exists $C''>0$ such that $\left\|B(x^{\alpha} f)\right\|_{H^s}\leq C''\|x^{\alpha}f\|_{H^s}$, from what we obtain $\ds \sum_{|\alpha|=1}^N \frac{h^{|\alpha|}}{N_{\alpha}}\|B(x^{\alpha}f)\|_{H^s}\leq C'''\sum_{|\alpha|=1}^{\infty}\frac{1}{2^{|\alpha|}}=C_1$ for all $0<h<1/2$. To estimate the sum with $T(x^{\alpha}u)$, since $|\alpha|>0$ there exists $j=j_{\alpha}\in\{1,...,d\}$ such that $\alpha_{j}\geq 1$. Hence, there exists $C_2>0$ such that $\left\|T\circ x_j\right\|_{\mathcal{L}_b(H^s)}\leq C_2$. Then we obtain
\beqs
\sum_{|\alpha|=1}^N \frac{h^{|\alpha|}}{N_{\alpha}}\|T(x^{\alpha}u)\|_{H^s}\leq C_2h\sum_{|\alpha|=1}^N \frac{h^{|\alpha|-1}}{N_{|\alpha|-1}}\left\|x^{\alpha-e_{j_{\alpha}}}u\right\|_{H^s}\leq C_3h H^{s,h,(r'_p)}_{N-1}[u].
\eeqs
Since $C_3$ does not depend on $h$, for fixed $0<\varepsilon<1$ we can find $h_0=h_0(\varepsilon)<1/2$ such that for all $0<h<h_0$
\beqs
\sum_{|\alpha|=1}^N \frac{h^{|\alpha|}}{N_{\alpha}}\|T(x^{\alpha}u)\|_{H^s}\leq \varepsilon H^{s,h,(r'_p)}_{N-1}[u].
\eeqs
Now, if we use Lemmas \ref{360} and \ref{367} for fixed $0<\varepsilon <1$ there exists $h_0=h_0(\varepsilon)$ such that for all $0<h<h_0$ we obtain
\beqs
H^{s,h,(r'_p)}_{N}[u]\leq \|u\|_{H^s}+\varepsilon H^{s,h,(r'_p)}_{N-1}[u]+C_1+\varepsilon H^{s,h,(r'_p)}_{N-1}[u]+\varepsilon H^{s,h,(r'_p)}_{N-1}[u].
\eeqs
By iterating this estimate and possibly shrinking $\varepsilon$ we obtain that
$\ds\sum_{|\alpha|=0}^{\infty}\frac{h^{|\alpha|}}{N_{\alpha}}\|x^{\alpha}u\|_{H^s}$ is finite for some sufficiently small $h$. If $\tilde{h}>0$ is arbitrary but fixed, there exists $\tilde{C}>0$ such that $\tilde{h}^p\leq \tilde{C} h^p R'_p$, for all $p\in\ZZ_+$. Hence the sum $\ds \sum_{|\alpha|=0}^{\infty}\frac{\tilde{h}^{|\alpha|}}{M_{\alpha}}\|x^{\alpha}u\|_{H^s}$ converges. This completes the proof in the $(M_p)$ case. The $\{M_p\}$ case is completely similar. We leave the details to the reader.
\qed

\vskip0.3cm
Now we prove Theorem \ref{3975}.

\begin{lemma}\label{3910}
Let $A=a(x,D)$ be $(\tilde{M}_p)$-elliptic (resp. $\{\tilde{M}_p\}$-elliptic) operator and let $B$ be its parametrix. Then the following properties hold: \\
\indent $i)$ In the $(M_p)$ case, let $(r_p)\in\mathfrak{R}$ be the sequence in Lemma \ref{340}. Let $(r'_p)\in\mathfrak{R}$ be a sequence such that $(r'_p)\leq (r_p)$, $r'_1=1$ and the sequence $M_p/R'_p$ satisfies $(M.3)'$ and $(M.4)$. Then for each $0<\varepsilon<1$ there exists $h_0=h_0(\varepsilon)$ such that for every $0<h<h_0$
\beqs
\sum_{|\alpha|=1}^N\frac{h^{|\alpha|}R'_{\alpha}}{M_{\alpha}}\left\|B[A,\partial^{\alpha}]u\right\|_{H^s}\leq \varepsilon E^{s,h,(r'_p)}_{N-1}[u].
\eeqs
\indent $ii)$ In the $\{M_p\}$ case, for each $0<\varepsilon<1$ there exists $h_0=h_0(\varepsilon)$ such that for all $0<h<h_0(\varepsilon)$
\beqs
\sum_{|\alpha|=1}^N\frac{h^{|\alpha|}}{M_{\alpha}}\left\|B[A,\partial^{\alpha}]u\right\|_{H^s}\leq \varepsilon E^{s,h}_{N-1}[u].
\eeqs
\end{lemma}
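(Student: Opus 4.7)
The plan is to mirror the proof of Lemma \ref{367} almost verbatim, replacing multiplication by $x^{\alpha}$ with differentiation $\partial^{\alpha}$. The starting algebraic identity, obtained by iterating the Leibniz rule $\partial_{x_j}(a(x,D)u) = (\partial_{x_j}a)(x,D)u + a(x,D)\partial_{x_j}u$, is
\beqs
\partial^{\alpha}(a(x,D)u) = \sum_{\beta \leq \alpha}\binom{\alpha}{\beta}(\partial_x^{\beta}a)(x,D)\,\partial^{\alpha-\beta}u,
\eeqs
so that
\beqs
[A,\partial^{\alpha}]u = -\sum_{\substack{\beta \leq \alpha\\ \beta \neq 0}}\binom{\alpha}{\beta}(\partial_x^{\beta}a)(x,D)\,\partial^{\alpha-\beta}u.
\eeqs
Applying $B$ to both sides reduces the problem to bounding the composed operators $B\circ (\partial_x^{\beta}a)(x,D)$.

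For this I invoke Lemma \ref{340}, noting that $\partial_x^{\beta}$ is a special case of $D_w^{\gamma}$ with $w=(x,\xi)\in\RR^{2d}$ (simply take the $\xi$-component of $\gamma$ to be zero). In the $(M_p)$ case Lemma \ref{340} then furnishes a sequence $(r_p)\in\mathfrak{R}$ with $r_1=1$ and a constant $C>0$ such that
\beqs
\left\|B\circ (\partial_x^{\beta}a)(x,D)\right\|_{\mathcal{L}_b(H^s)} \leq C\, M_{\beta}/R_{\beta}.
\eeqs
Given the sequence $(r'_p)\leq (r_p)$ in the statement (its existence being guaranteed by Lemma \ref{115}), I set $N_p = M_p/R'_p$, which satisfies $(M.1)$, $(M.3)'$ and $(M.4)$. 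Then, exactly as in Lemma \ref{367},
\beqs
\sum_{|\alpha|=1}^N\frac{h^{|\alpha|}}{N_{\alpha}}\|B[A,\partial^{\alpha}]u\|_{H^s} &\leq& C \sum_{|\alpha|=1}^N\sum_{\substack{\beta\leq \alpha\\ \beta \neq 0}}\frac{h^{|\alpha|}N_{\beta}}{N_{\alpha}}\binom{\alpha}{\beta}\|\partial^{\alpha-\beta}u\|_{H^s}\\
&\leq& C\sum_{|\beta|=1}^N h^{|\beta|}\sum_{\substack{\alpha\geq \beta\\ |\alpha|\leq N}}\frac{h^{|\alpha|-|\beta|}}{N_{\alpha-\beta}}\|\partial^{\alpha-\beta}u\|_{H^s},
\eeqs
where in the second inequality I use $(M.4)$ for $N_p$ together with $\binom{\alpha}{\beta}\leq \binom{|\alpha|}{|\beta|}$. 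Choosing $h$ smaller than some $h_0(\varepsilon)$ makes the geometric series $\sum h^{|\beta|}$ small enough so that the whole right-hand side is bounded by $\varepsilon E^{s,h,(r'_p)}_{N-1}[u]$, which yields $i)$.

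For $ii)$ the $\{M_p\}$ case is entirely analogous: Lemma \ref{340} now yields a constant $h_1>0$ and $C>0$ with $\|B\circ (\partial_x^{\beta}a)(x,D)\|_{\mathcal{L}_b(H^s)}\leq C h_1^{|\beta|}M_{\beta}$, and one chooses $h$ such that $hh_1$ is small enough to absorb the summation. I do not expect any genuine obstacle: the entire argument is a routine translation of Lemma \ref{367}, the only minor point worth checking being that the bounds of Lemma \ref{340}, stated for the full $2d$-dimensional gradient $D_w^{\gamma}$, cover the subcase of pure $x$-derivatives used here.
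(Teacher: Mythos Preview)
Your proposal is correct and follows essentially the same route as the paper: the same commutator expansion $[A,\partial^{\alpha}]u=-\sum_{\beta\neq 0}\binom{\alpha}{\beta}(\partial_x^{\beta}a)(x,D)\partial^{\alpha-\beta}u$, the same appeal to Lemma~\ref{340} for the bound $\|B(\partial_x^{\beta}a)(x,D)\|_{\mathcal{L}_b(H^s)}\leq C N_{\beta}$ (via $R'_{\beta}\leq R_{\beta}$), and the same use of $(M.4)$ for $N_p$ together with $\binom{\alpha}{\beta}\leq\binom{|\alpha|}{|\beta|}$ to reduce to a geometric series in $h$. Your remark that pure $x$-derivatives are covered by the $D_w^{\gamma}$ bound of Lemma~\ref{340} is correct, and the $\{M_p\}$ case proceeds exactly as you describe.
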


\begin{proof} First we prove the $(M_p)$ case. As before, put $N_p=M_p/R'_p$, for $p\in\ZZ_+$ and $N_0=1$ (the existence of such $(r'_p)\in\mathfrak{R}$ is given by Lemma \ref{115}). Observe that
\beqs
B[A,\partial^{\alpha}]u=-\sum_{\substack{\beta\leq\alpha\\ \beta\neq 0}}{\alpha\choose\beta}B(\partial^{\beta}_x a)(x,D) \partial^{\alpha-\beta}_x u.
\eeqs
By Lemma \ref{340}, there exists $C>0$ such that $\left\|B(\partial^{\beta}_x a)(x,D)\right\|_{\mathcal{L}_b(H^s)}\leq C N_{\beta}$. Let $0<\varepsilon<1$ be fixed. Choose $0<h_0<1/2$ such that $h_0<\varepsilon \left(2C\sum_{|\beta|=1}^{\infty}2^{-|\beta|+1}\right)^{-1}$. For $0<h<h_0$ we obtain\\
\\
$\ds \sum_{|\alpha|=1}^N\frac{h^{|\alpha|}}{N_{\alpha}}\left\|B[A,\partial^{\alpha}]u\right\|_{H^s}$
\beqs
&\leq&\sum_{|\alpha|=1}^N \frac{h^{|\alpha|}}{N_{\alpha}} \sum_{\substack{\beta\leq\alpha\\ \beta\neq 0}} {\alpha\choose\beta}\left\|B(\partial^{\beta}_x a) (x,D) \partial^{\alpha-\beta}_x u\right\|_{H^s}\\
&\leq& C\sum_{|\alpha|=1}^N \sum_{\substack{\beta\leq\alpha\\ \beta\neq 0}} \frac{h^{|\alpha|}N_{\beta}}{N_{\alpha}} {\alpha\choose\beta}\left\|\partial^{\alpha-\beta}_x u\right\|_{H^s}\leq C\sum_{|\beta|=1}^N h^{|\beta|}\sum_{\substack{\alpha\geq\beta\\ |\alpha|\leq N}} \frac{h^{|\alpha|-|\beta|}}{N_{\alpha-\beta}} \left\|\partial^{\alpha-\beta}_x u\right\|_{H^s}\\
&\leq&\varepsilon E^{s,h,(r'_p)}_{N-1}[u],
\eeqs
where in the third inequality, we used $(M.4)$ for $N_p$ and the fact $\ds {\alpha\choose\beta}\leq {|\alpha|\choose |\beta|}$. This completes the proof in the $(M_p)$ case. For the $\{M_p\}$ case, let $\varepsilon>0$. By Lemma \ref{340}, there exist $h_1,C>0$ such that $\left\|B(\partial^{\beta}_x a)(x,D)\right\|_{\mathcal{L}_b(H^s)}\leq C h_1^{|\beta|}M_{\beta}$. Choose $h_0>0$ such that $h_0h_1<1/2$ and $h_0h_1\leq \varepsilon \left(2C\sum_{|\beta|=1}^{\infty}2^{-|\beta|+1}\right)^{-1}$. Then, for $0<h<h_0$, similarly as before, we obtain
\beqs
\sum_{|\alpha|=1}^N\frac{h^{|\alpha|}}{M_{\alpha}}\left\|B[A,\partial^{\alpha}]u\right\|_{H^s}&\leq&C\sum_{|\beta|=1}^N (hh_1)^{|\beta|}\sum_{\substack{\alpha\geq\beta\\ |\alpha|\leq N}} \frac{h^{|\alpha|-|\beta|}}{M_{\alpha-\beta}} \left\|\partial^{\alpha-\beta}_x u\right\|_{H^s}\\
&\leq& \varepsilon E^{s,h}_{N-1}[u],
\eeqs
which completes the proof.
\end{proof}

\begin{lemma}\label{3920}
Let $A=a(x,D)$ be $(\tilde{M}_p)$-elliptic (resp. $\{\tilde{M}_p\}$-elliptic) operator and let $B$ be its parametrix. Let $F[u]=p(x)u^l$, for some $l\geq 2$, $l\in\NN$. Then the following properties hold: \\
\indent $i)$ In the $(M_p)$ case, let $(r_p)\in\mathfrak{R}$ be the sequence in Lemma \ref{350}. Let $(r'_p)\in\mathfrak{R}$ be a sequence such that $(r'_p)\leq (r_p)$, $r'_1=1$ and the sequence $M_p/R'_p$ satisfies $(M.3)'$ and $(M.4)$. Then for each $0<\varepsilon<1$ there exists $h_0=h_0(\varepsilon)$ such that for every $0<h<h_0$
\beqs
\sum_{|\alpha|=1}^N \frac{h^{|\alpha|}R'_{\alpha}}{M_{\alpha}}\left\|B\left(\partial^{\alpha}(p(x)u^l)\right)\right\|_{H^s}\leq \varepsilon\left(E^{s,h,(r'_p)}_{N-1}[u]\right)^l.
\eeqs
\indent $ii)$ In the $\{M_p\}$ case, for each $0<\varepsilon<1$ there exists $h_0=h_0(\varepsilon)$ such that for every $0<h<h_0$
\beqs
\sum_{|\alpha|=1}^N \frac{h^{|\alpha|}}{M_{\alpha}}\left\|B\left(\partial^{\alpha}(p(x)u^l)\right)\right\|_{H^s}\leq \varepsilon\left(E^{s,h}_{N-1}[u]\right)^l.
\eeqs
\end{lemma}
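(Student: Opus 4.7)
The plan is to follow the strategy of Lemma \ref{360}, with the trivial factorization of $x^{\alpha}F[u]$ replaced by the Leibniz expansion
\[
\partial^{\alpha}(pu^l) = \sum_{\beta + \sigma^{(1)} + \cdots + \sigma^{(l)} = \alpha}\frac{\alpha!}{\beta!\,\sigma^{(1)}!\cdots\sigma^{(l)}!}(\partial^{\beta}p)\prod_{j=1}^{l}\partial^{\sigma^{(j)}}u,
\]
and separating the sum into generic terms (in which every $|\sigma^{(j)}|\leq N-1$) and $l$ exceptional terms of the form $pu^{l-1}\partial^{\alpha}u$ with $|\alpha|=N$ (arising from $\beta=0$, $\sigma^{(j_0)}=\alpha$). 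For each generic term the operator bound $\|B\circ M_{\partial^{\beta}p}\|_{\mathcal{L}(H^s)}\leq C\tilde{h}^{|\beta|}$ (from Lemma \ref{349} applied to the family $\{\partial^{\beta}p\}_{\beta}$, which verifies \eqref{343} after a Komatsu-type manipulation using $A_p\subset \tilde{M}_p$, with $\gamma=0$ and arbitrary $\tilde{h}>0$), combined with the Banach algebra property for $s>d/2$, yields $\|B((\partial^{\beta}p)\prod_j\partial^{\sigma^{(j)}}u)\|_{H^s}\leq CC_s^{l-1}\tilde{h}^{|\beta|}\prod_j\|\partial^{\sigma^{(j)}}u\|_{H^s}$.

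The combinatorial core of the argument is the distribution of the weight $h^{|\alpha|}R'_{\alpha}/M_{\alpha}$ across the $l+1$ sub-indices. Since $N_p:=M_p/R'_p$ satisfies $(M.4)$ with $N_0=N_1=1$ (because $r'_1=M_0=M_1=1$), the sequence $N_p/p!$ is log-convex with value $1$ at $p=0,1$, so $\log(N_p/p!)$ is superadditive and iteration gives
\[
\binom{|\alpha|}{|\beta|,|\sigma^{(1)}|,\ldots,|\sigma^{(l)}|}\,\frac{R'_{\alpha}}{M_{\alpha}} \leq \frac{R'_{\beta}}{M_{\beta}}\prod_{j=1}^{l}\frac{R'_{\sigma^{(j)}}}{M_{\sigma^{(j)}}}.
\]
Combined with $\frac{\alpha!}{\beta!\prod\sigma^{(j)}!}\leq\frac{|\alpha|!}{|\beta|!\prod|\sigma^{(j)}|!}$ and the factorization $h^{|\alpha|}=h^{|\beta|}\prod h^{|\sigma^{(j)}|}$, this absorbs the Leibniz multinomial into the weight. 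After relaxing the constraint $|\beta|+\sum|\sigma^{(j)}|\leq N$, the generic contribution factorizes as $\bigl(\sum_{\beta}(\tilde{h}h)^{|\beta|}R'_{\beta}/M_{\beta}\bigr)$ times $\sum_{k=1}^{l}\binom{l}{k}\|u\|_{H^s}^{l-k}(E^{s,h,(r'_p)}_{N-1}[u]-\|u\|_{H^s})^k$ (this being $(E^{s,h,(r'_p)}_{N-1}[u])^l-\|u\|_{H^s}^l$ by the binomial theorem). Since $E^{s,h,(r'_p)}_{N-1}[u]-\|u\|_{H^s}$ carries at least one factor of $h$ from the $|\sigma|\geq 1$ tail, each summand in this sum is at most $O(h)(E^{s,h,(r'_p)}_{N-1}[u])^l$. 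Convergence of the $\beta$-series follows from $(M.3)'$ for $N_p$, and choosing $\tilde{h}$ and $h$ sufficiently small makes the generic contribution less than $(\varepsilon/2)(E^{s,h,(r'_p)}_{N-1}[u])^l$.

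The remaining obstacle is the $l$ exceptional terms, which carry $\|\partial^{\alpha}u\|_{H^s}$ with $|\alpha|=N$ outside $E^{s,h,(r'_p)}_{N-1}[u]$. I would resolve these by one integration by parts: choose $j=j_{\alpha}$ with $\alpha_j>0$, write $\partial^{\alpha}=\partial_j\partial^{\alpha-e_j}$, and use Leibniz inside $\partial_j$ to obtain
\[
B(pu^{l-1}\partial^{\alpha}u)=(B\circ p\,\partial_j)(u^{l-1}\partial^{\alpha-e_j}u)-(l-1)B(pu^{l-2}(\partial_j u)\partial^{\alpha-e_j}u).
\]
The first term is controlled by $\|B\circ p\,\partial_j\|_{\mathcal{L}_b(H^s)}\leq C$ from Lemma \ref{350}(b) together with the algebra property, giving $C\|u\|_{H^s}^{l-1}\|\partial^{\alpha-e_j}u\|_{H^s}$; the second by $\|B\circ p\|_{\mathcal{L}_b(H^s)}\leq C$ (Lemma \ref{349} with $\gamma=0$) and algebra, giving $C\|u\|_{H^s}^{l-2}\|\partial_j u\|_{H^s}\|\partial^{\alpha-e_j}u\|_{H^s}$. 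All derivatives are now of order $\leq N-1$, and using $\|u\|_{H^s},\,h\|\partial_j u\|_{H^s}\leq E^{s,h,(r'_p)}_{N-1}[u]$ together with $\frac{h^NR'_N/M_N}{h^{N-1}R'_{N-1}/M_{N-1}}=hr'_N/m_N\leq hC_0$ (uniformly, since $(M.3)'$ for $N_p$ implies $r'_p/m_p$ bounded), the weighted exceptional contribution is $\leq Ch(E^{s,h,(r'_p)}_{N-1}[u])^l$; the base case $N=1$ is handled directly via $lu^{l-1}\partial_j u=\partial_j(u^l)$, reducing to $(B\circ p\,\partial_j)(u^l)$. Taking $h_0(\varepsilon)$ small enough yields the overall bound $\leq\varepsilon(E^{s,h,(r'_p)}_{N-1}[u])^l$, proving $i)$; part $ii)$ is analogous in the $\{M_p\}$ case using Lemma \ref{349} in its $\{M_p\}$ version.
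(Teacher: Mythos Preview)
Your overall strategy mirrors the paper's, but the place where you extract the crucial factor of $h$ is flawed, and this is exactly the point where the paper's argument differs from yours.

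The gap is in your treatment of the generic $\beta=0$ terms. After weight distribution and relaxing constraints, the $\beta=0$ part of your generic sum is bounded by $C\bigl(E^{s,h,(r'_p)}_{N-1}[u]\bigr)^l-C\|u\|_{H^s}^l$ (your factorization $S_\beta\cdot[(E_{N-1})^l-\|u\|^l]$ is not correct: the only term removed is $\beta=0$ \emph{and} all $\sigma^{(j)}=0$, not all $\sigma^{(j)}=0$ for each $\beta$). You then claim that $E_{N-1}[u]-\|u\|_{H^s}$ ``carries a factor of $h$'' and hence $(E_{N-1}[u])^l-\|u\|_{H^s}^l=O(h)(E_{N-1}[u])^l$. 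But this is false in general: writing $E_{N-1}[u]-\|u\|_{H^s}=h\sum_{1\le|\alpha|\le N-1}\frac{h^{|\alpha|-1}}{N_\alpha}\|\partial^\alpha u\|_{H^s}$, the remaining sum has derivative order $|\alpha|$ but weight of order $|\alpha|-1$, so it is \emph{not} dominated by $E_{N-1}[u]$. Concretely, for $N=2$ one would need $h\|\partial_j u\|_{H^s}\le \varepsilon\bigl(\|u\|_{H^s}+h\sum_k\|\partial_k u\|_{H^s}\bigr)$, which for small $h$ forces $\|\partial_j u\|_{H^s}\le C\|u\|_{H^s}$---not available a priori, and precisely what the iteration is meant to prove.

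Your integration-by-parts fix for the exceptional terms does not rescue this either: the correction term $(l-1)B\bigl(pu^{l-2}(\partial_j u)\partial^{\alpha-e_j}u\bigr)$ contains \emph{two} nontrivial derivatives of $u$, of orders $1$ and $N-1$, totalling $N$. When you absorb $h\|\partial_j u\|_{H^s}\le E_{N-1}[u]$ and $\frac{h^{N-1}}{N_{N-1}}\|\partial^{\alpha-e_j}u\|_{H^s}\le E_{N-1}[u]$ you have used all $N$ powers of $h$ from the weight $h^N/N_N$, and no free factor of $h$ remains.

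The paper avoids both problems by making the split on $\gamma=0$ versus $\gamma\neq 0$ (your $\beta$) rather than on $|\sigma^{(j)}|=N$ versus $\le N-1$, and---crucially---by applying $B\circ p\,\partial_j$ to $\partial^{\alpha-e_j}u^l$ \emph{before} Leibniz-expanding $u^l$. Since $p\,\partial^{\alpha}u^l=p\,\partial_j(\partial^{\alpha-e_j}u^l)$ as an identity of operators on $u^l$, no product-rule correction appears; the Leibniz indices $\nu^{(k)}$ then satisfy $\sum_k|\nu^{(k)}|=|\alpha|-1$, and the weight $h^{|\alpha|}/N_\alpha\le h\prod_k h^{|\nu^{(k)}|}/N_{\nu^{(k)}}$ leaves one clean factor of $h$. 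Replacing your full Leibniz expansion by this two-step split (first on $\gamma$, then Leibniz on $u^l$ with one derivative already removed) closes the gap.
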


\begin{proof} Observe that
\beqs
B\left(\partial^{\alpha}(p(x)u^l)\right)=B\left(p(x)\partial^{\alpha}u^l\right) +\sum_{\substack{\gamma\leq \alpha\\ \gamma\neq 0}} {\alpha\choose\gamma} B\left(\partial^{\gamma}p(x) \partial^{\alpha-\gamma}u^l\right)
\eeqs
First we consider the $(M_p)$ case. As before, put $N_p=M_p/R'_p$, for $p\in\ZZ_+$ and $N_0=1$. Since $|\alpha|\geq 1$, there exists $j=j_{\alpha}\in\{1,...,d\}$ such that $\alpha_{j}>0$. By Lemma \ref{350}, there exists $C_1>0$ such that $\left\|\left(B\circ p(x)\partial_j\right) (x,D)\right\|_{\mathcal{L}_b(H^s)}\leq C_1$. Then we have
\beqs
\left\|B\left(p(x)\partial^{\alpha}u^l\right)\right\|_{H^s}&\leq& C_1\left\|\partial^{\alpha-e_{j}}u^l\right\|_{H^s}\\
&\leq& C_2\sum_{\nu^{(1)}+...+\nu^{(l)}=\alpha-e_{j}}\frac{(\alpha-e_{j})!}{\nu^{(1)}!\cdot...\cdot\nu^{(l)}!} \prod_{k=1}^l\left\|\partial^{\nu^{(k)}}u\right\|_{H^s}.
\eeqs
Observe that, by $(M.4)$,
\beqs
\frac{(\alpha-e_{j})!}{\nu^{(1)}!\cdot...\cdot\nu^{(l)}!}\cdot\frac{h^{|\alpha|}}{N_{\alpha}}\leq \frac{h N_{|\alpha|-1}} {N_{\alpha}} \prod_{k=1}^l\frac{h^{\left|\nu^{(k)}\right|}}{N_{\nu^{(k)}}}\leq h \prod_{k=1}^l\frac{h^{\left|\nu^{(k)}\right|}}{N_{\nu^{(k)}}}.
\eeqs
We obtain
\beqs
\sum_{|\alpha|=1}^N\frac{h^{|\alpha|}}{N_{\alpha}}\left\|B\left(p(x)\partial^{\alpha}u^l\right)\right\|_{H^s}&\leq& C_2h\sum_{|\alpha|=1}^N \sum_{\nu^{(1)}+...+\nu^{(l)}=\alpha-e_{j_{\alpha}}}\prod_{k=1}^l \frac{h^{\left|\nu^{(k)}\right|}}{N_{\nu^{(k)}}}\left\|\partial^{\nu^{(k)}} u\right\|_{H^s}\\
&\leq& dC_2h\left(E^{s,h,(r'_p)}_{N-1}[u]\right)^l.
\eeqs
Since $C_2$ does not depend on $h$, for fixed $0<\varepsilon<1$ we can take $h_0=\varepsilon/(dC_2)$. Then for all $h<h_0$ we obtain $\ds \sum_{|\alpha|=1}^N\frac{h^{|\alpha|}}{N_{\alpha}}\left\|B\left(p(x)\partial^{\alpha}u^l\right)\right\|_{H^s}\leq \varepsilon \left(E^{s,h,(r'_p)}_{N-1}[u]\right)^l$. One easily verifies that the functions $p'_{\beta}(x)=\partial^{\beta}_x p(x)$, $\beta\in\NN^d$ satisfy (\ref{343}). Lemma \ref{350} implies that there exists $C_1>0$ such that $\left\|B\circ\partial^{\gamma}p(x)\right\|_{\mathcal{L}_b(H^s)}\leq C_1/R'_{\gamma}$. For $h<1$ we obtain\\
\\
$\ds\sum_{|\alpha|=1}^N \frac{h^{|\alpha|}}{N_{\alpha}}\sum_{\substack{\gamma\leq \alpha\\ \gamma\neq 0}} {\alpha\choose\gamma} \left\|B\left(\partial^{\gamma}p(x) \partial^{\alpha-\gamma}u^l\right)\right\|_{H^s}$
\beqs
&\leq& C_1h\sum_{|\alpha|=1}^N \sum_{\substack{\gamma\leq \alpha\\ \gamma\neq 0}}{\alpha\choose\gamma} \frac{h^{|\alpha|-1}}{N_{\alpha}R'_{\gamma}}\left\|\partial^{\alpha-\gamma}u^l\right\|_{H^s}\\
&\leq& C'_2h\sum_{|\alpha|=1}^N \sum_{\substack{\gamma\leq \alpha\\ \gamma\neq 0}}{\alpha\choose\gamma} \frac{h^{|\alpha|-1}}{N_{\alpha}R'_{\gamma}}\sum_{\nu^{(1)}+...+\nu^{(l)}=\alpha-\gamma}\frac{(\alpha-\gamma)!} {\nu^{(1)}!\cdot...\cdot\nu^{(l)}!} \prod_{k=1}^l\left\|\partial^{\nu^{(k)}}u\right\|_{H^s}\\
&\leq&C'_2h\sum_{|\gamma|=1}^N \frac{1}{R'_{\gamma}}\sum_{\substack{\alpha\geq \gamma\\ |\alpha|\leq N}} \frac{h^{|\alpha|-1}}{N_{\alpha}}\sum_{\nu^{(1)}+...+\nu^{(l)}=\alpha-\gamma} \frac{\alpha!} {\nu^{(1)}!\cdot...\cdot\nu^{(l)}!\gamma!} \prod_{k=1}^l\left\|\partial^{\nu^{(k)}}u\right\|_{H^s}\\
&\leq&C'_2h\sum_{|\gamma|=1}^N \frac{1}{N_{\gamma}R'_{\gamma}}\sum_{\substack{\alpha\geq \gamma\\ |\alpha|\leq N}} \sum_{\nu^{(1)}+...+\nu^{(l)}=\alpha-\gamma} \prod_{k=1}^l\frac{h^{\left|\nu^{(k)}\right|}}{N_{\nu^{(k)}}}\left\|\partial^{\nu^{(k)}}u\right\|_{H^s}\\
&\leq& C'_2h\sum_{|\gamma|=1}^N \frac{1}{M_{\gamma}} \sum_{\left|\nu^{(1)}\right|+...+\left|\nu^{(l)}\right|=0}^{N-1} \prod_{k=1}^l\frac{h^{\left|\nu^{(k)}\right|}}{N_{\nu^{(k)}}}\left\|\partial^{\nu^{(k)}}u\right\|_{H^s}\leq C'_3h\left(E^{s,h,(r'_p)}_{N-1}[u]\right)^l.
\eeqs
Since $C'_3$ does not depend on $h$, for fixed $0<\varepsilon<1$ we can choose $h_0=\varepsilon/C'_3$. Then, for all $0<h<h_0$, we have
\beqs
\sum_{|\alpha|=1}^N \frac{h^{|\alpha|}}{N_{\alpha}}\sum_{\substack{\gamma\leq \alpha\\ |\gamma|\neq 0}} {\alpha\choose\gamma} \left\|B\left(\partial^{\gamma}p(x) \partial^{\alpha-\gamma}u^l\right)\right\|_{H^s}\leq \varepsilon\left(E^{s,h,(r'_p)}_{N-1}[u]\right)^l,
\eeqs
which, combined with the above estimate for $\ds \sum_{|\alpha|=1}^N\frac{h^{|\alpha|}}{N_{\alpha}}\left\|B\left(p(x)\partial^{\alpha}u^l\right)\right\|_{H^s}$, completes the proof in the $(M_p)$ case by shrinking $h_0$ if necessary. In the $\{M_p\}$ case the proof is similar.
\end{proof}

\textit{Proof of Theorem \ref{3975}.} $i)$ When $F[u]$ is a finite sum it is clear that it is enough to prove the theorem when $F[u]=p(x)u^l$, $l\geq 2$, $l\in\NN$. Differentiating both terms of \eqref{maineq}, we have $\partial^{\alpha}Au=\partial^{\alpha}f+\partial^{\alpha}F[u]$, from which we obtain
\beqs
A(\partial^{\alpha}u)=[A,\partial^{\alpha}]u+\partial^{\alpha}f+\partial^{\alpha}F[u].
\eeqs
Hence, we have
\beq\label{390}
\partial^{\alpha}u=B[A,\partial^{\alpha}]u+B(\partial^{\alpha}f)+B(\partial^{\alpha}F[u])+T(\partial^{\alpha}u).
\eeq
We consider the $(M_p)$ case. Since $f\in\SSS^{(M_p)}$, for every $\tilde{h}>0$, $\ds\sup_{\alpha}\frac{\|\partial^{\alpha} f\|_{H^s}}{\tilde{h}^{|\alpha|}M_{\alpha}}$ is bounded. Hence, by Lemma 3.4 of \cite{Komatsu3}, there exist $(\tilde{r}_p)\in\mathfrak{R}$ and $C'>0$ such that $\|\partial^{\alpha} f\|_{H^s}\leq C'M_{\alpha}/\tilde{R}_{\alpha}$. Obviously, without losing generality, we can assume that $\tilde{r}_1=1$. By Lemma \ref{115} we can find $(r'_p)\in\mathfrak{R}$ such that $r'_1=1$, $(r'_p)\leq (\tilde{r}_p)$, $(r'_p)$ is smaller than the sequences in Lemmas \ref{340} and \ref{350} and the sequence $N_p=M_p/R'_p$, for $p\in\ZZ_+$ and $N_0=1$, satisfies $(M.3)'$, $(M.4)$ and $N_1=1$. If we multiply (\ref{390}) by $h^{|\alpha|}/N_{\alpha}$, take Sobolev norms and sum up for $|\alpha|\leq N$, we obtain
\beqs
E^{s,h,(r'_p)}_N[u]&\leq&\|u\|_{H^s}+\sum_{|\alpha|=1}^N \frac{h^{|\alpha|}}{N_{\alpha}}\|B[A,\partial^{\alpha}]u\|_{H^s}+\sum_{|\alpha|=1}^N \frac{h^{|\alpha|}}{N_{\alpha}}\|B(\partial^{\alpha}f)\|_{H^s}\\
&{}&+\sum_{|\alpha|=1}^N \frac{h^{|\alpha|}}{N_{\alpha}}\|B(\partial^{\alpha}F[u])\|_{H^s}+\sum_{|\alpha|=1}^N \frac{h^{|\alpha|}}{N_{\alpha}}\|T(\partial^{\alpha}u)\|_{H^s}.
\eeqs
We estimate each of the terms above. By the growth estimate for the symbol of $B$ (\ref{grwww}) there exists $C''>0$ such that $\left\|B(\partial^{\alpha} f)\right\|_{H^s}\leq C''\|\partial^{\alpha}f\|_{H^s}$. Hence $\ds \sum_{|\alpha|=1}^N \frac{h^{|\alpha|}}{N_{\alpha}}\|B(\partial^{\alpha}f)\|_{H^s}\leq C'''\sum_{|\alpha|=1}^{\infty}\frac{1}{2^{|\alpha|}}=C_1$ for all $0<h<1/2$. To estimate the sum with $T(\partial^{\alpha}u)$, since $|\alpha|>0$ there exists $j_{\alpha}\in\{1,...,d\}$ such that $\alpha_{j_{\alpha}}\geq 1$. Hence there exists $C_2>0$ such that $\left\|T\circ\partial^{e_{j_{\alpha}}}\right\|_{\mathcal{L}_b(H^s)}\leq C_2$. Then we obtain
\beqs
\sum_{|\alpha|=1}^N \frac{h^{|\alpha|}}{N_{\alpha}}\|T(\partial^{\alpha}u)\|_{H^s}\leq C_2h\sum_{|\alpha|=1}^N \frac{h^{|\alpha|-1}}{N_{|\alpha|-1}}\left\|\partial^{\alpha-e_{j_{\alpha}}}u\right\|_{H^s}\leq C_3h E^{s,h,(r'_p)}_{N-1}[u].
\eeqs
Since $C_3$ does not depend on $h$, for fixed $0<\varepsilon<1$ we can find $h_0=h_0(\varepsilon)<1/2$ such that for all $0<h<h_0$
\beqs
\sum_{|\alpha|=1}^N \frac{h^{|\alpha|}}{N_{\alpha}}\|T(\partial^{\alpha}u)\|_{H^s}\leq \varepsilon E^{s,h,(r'_p)}_{N-1}[u].
\eeqs
For fixed $0<\varepsilon<1$, by Lemmas \ref{3910} and \ref{3920} for the chosen $(r'_p)$, we can find $h_0=h_0(\varepsilon)<1/2$ such that for all $0<h<h_0$, we have
\beqs
E^{s,h,(r'_p)}_N[u]&\leq&\|u\|_{H^s}+\varepsilon E^{s,h,(r'_p)}_{N-1}[u]+C_1 +\varepsilon\left(E^{s,h,(r'_p)}_{N-1}[u]\right)^l+\varepsilon E^{s,h,(r'_p)}_{N-1}[u].
\eeqs
By iterating this estimate one obtains that $\|\partial^{\alpha} u\|_{H^s}$, $\alpha\in\NN^d$, are finite and by shrinking $\varepsilon$ if necessary, that the sum $\ds \sum_{|\alpha|=0}^{\infty} \frac{h^{|\alpha|}}{N_{\alpha}}\|\partial^{\alpha} u\|_{H^s}$ converges, for some, small enough, $h$. If $\tilde{h}>0$ is arbitrary but fixed there exists $\tilde{C}>0$ such that $\tilde{h}^p\leq C h^p R'_p$ for all $p\in\ZZ_+$. Hence $\ds \sum_{|\alpha|=0}^{\infty} \frac{\tilde{h}^{|\alpha|}}{M_{\alpha}}\|\partial^{\alpha} u\|_{H^s}$ converges. This completes the proof in the $(M_p)$ case. The proof in the $\{M_p\}$ case is similar and we omit it.\\
\indent To prove $ii)$ we consider first the $(M_p)$ case. Proceed as in the proof $i)$ to obtain
\beqs
\sum_{|\alpha|\leq N} \frac{h^{|\alpha|}}{\tilde{M}_{\alpha}}\|\partial^{\alpha}u\|_{H^s} &\leq&\|u\|_{H^s}+\sum_{|\alpha|=1}^N \frac{h^{|\alpha|}}{\tilde{M}_{\alpha}}\|B[A,\partial^{\alpha}]u\|_{H^s}+\sum_{|\alpha|=1}^N \frac{h^{|\alpha|}}{\tilde{M}_{\alpha}}\|B(\partial^{\alpha}f)\|_{H^s}\\
&{}&+\sum_{|\alpha|=1}^N \frac{h^{|\alpha|}}{\tilde{M}_{\alpha}}\|B(\partial^{\alpha}F[u])\|_{H^s}+\sum_{|\alpha|=1}^N \frac{h^{|\alpha|}}{\tilde{M}_{\alpha}}\|T(\partial^{\alpha}u)\|_{H^s}.
\eeqs
By Lemma \ref{349}, there exists $l\geq 1$ such that for each $\tilde{h}>0$ there exists $C_1>0$ such that $\left\|(B\circ D^{\alpha}p_{\beta}(x) \partial^{\gamma}) (x,D)\right\|_{\mathcal{L}(H^s)}\leq C_1\tilde{h}^{|\beta|}l^{|\gamma|}\tilde{M}_{\gamma}$. For $0<h<1/(4l)$, we have
\beqs
\sum_{|\alpha|=1}^N \frac{h^{|\alpha|}}{\tilde{M}_{\alpha}}\left\|B(\partial^{\alpha}F[u])\right\|_{H^s}&\leq& \sum_{|\beta|=2}^{\infty}\sum_{|\alpha|=1}^N \sum_{\gamma\leq \alpha}{\alpha\choose\gamma} \frac{h^{|\alpha|}}{\tilde{M}_{\alpha}}\left\|B(\partial^{\gamma}p_{\beta} \partial^{\alpha-\gamma}u^{|\beta|})\right\|_{H^s}\\
&\leq& C_1 \sum_{|\beta|=2}^{\infty}\sum_{|\alpha|=1}^N \frac{\tilde{h}^{|\beta|}\left\|u^{|\beta|}\right\|_{H^s}}{2^{|\alpha|}}\leq C_2,
\eeqs
where in the last inequality we used that $\left\|u^{|\beta|}\right\|_{H^s}\leq C_s^{|\beta|-1}\|u\|^{|\beta|}_{H^s}$ and chose $\tilde{h}\leq 1/(2C_s\|u\|_{H^s})$. The sequence $\tilde{M}_p$ satisfies $(M.4)$, so by analogous technique as in the proof of Lemma \ref{3910} one can prove that for each $0<\varepsilon<1$ there exists $h_0=h_0(\varepsilon)<1/2$ such that for every $0<h<h_0$
\beqs
\sum_{|\alpha|=1}^N\frac{h^{|\alpha|}}{\tilde{M}_{\alpha}}\left\|B[A,\partial^{\alpha}]u\right\|_{H^s}\leq \varepsilon \sum_{|\alpha|\leq N-1} \frac{h^{|\alpha|}}{\tilde{M}_{\alpha}}\left\|\partial^{\alpha} u\right\|_{H^s}.
\eeqs
Also, similarly as in the proof of $i)$, we have that for $0<h<h_0(\varepsilon)$, $$\ds \sum_{|\alpha|=1}^N \frac{h^{|\alpha|}}{\tilde{M}_{\alpha}}\|B(\partial^{\alpha}f)\|_{H^s}\leq C_3$$ and
\beqs
\sum_{|\alpha|=1}^N \frac{h^{|\alpha|}}{\tilde{M}_{\alpha}}\|T(\partial^{\alpha}u)\|_{H^s}\leq \varepsilon \sum_{|\alpha|\leq N-1} \frac{h^{|\alpha|}}{\tilde{M}_{\alpha}}\|\partial^{\alpha} u\|_{H^s}.
\eeqs
Hence, for $0<h<h_0(\varepsilon)$, for sufficiently small $h_0(\varepsilon)$, we have
\beqs
\sum_{|\alpha|\leq N} \frac{h^{|\alpha|}}{\tilde{M}_{\alpha}}\|\partial^{\alpha} u\|_{H^s}\leq\|u\|_{H^s}+ C_4+2\varepsilon\sum_{|\alpha|\leq N-1} \frac{h^{|\alpha|}}{\tilde{M}_{\alpha}}\|\partial^{\alpha} u\|_{H^s}.
\eeqs
By iterating this estimate and possibly shrinking $\varepsilon$ we obtain that
$\ds\sum_{|\alpha|=0}^{\infty}\frac{h^{|\alpha|}}{\tilde{M}_{\alpha}}\|\partial^{\alpha} u\|_{H^s}$ is finite for some sufficiently small $h$, which finishes the proof in the $(M_p)$ case. The $\{M_p\}$ case is completely analogous.
\qed

\end{document}